\newcommand{\url}[1]{\texttt{#1}}
\newcommand{\F}{\ensuremath{\mathbb{F}}}    
\newcommand{\abs}[1]{\lvert#1\rvert}        
\newcommand{\gen}[1]{\langle{#1}\rangle}
\newcommand{\eps}{\varepsilon}
\newcommand{\isomorph}{\cong}
\DeclareMathOperator{\Aut}{Aut}
\DeclareMathOperator{\Fix}{Fix}
\DeclareMathOperator{\id}{id}
\DeclareMathOperator{\Id}{id}
\DeclareMathOperator{\Inv}{Inv}
\DeclareMathOperator{\charac}{char}
\DeclareMathOperator{\Sym}{Sym}
\DeclareMathOperator{\Stab}{Stab}
\DeclareMathOperator{\proj}{proj}
\newcommand{\mc}[1]{\mathcal{#1}}
\newcommand{\mcC}{\mathcal{C}}
\newcommand{\mouf}{\mathbb{M}} 
\newcommand{\op}{\; {\rm opp} \; }
\newcommand{\Uals}{\{U_\alpha\}_{\alpha\in\Phi}}
\newcommand{\thcod}{$\theta$-codistance\xspace}
\newcommand{\Ct}{\mathcal{C}^\theta}
\newcommand{\Gt}{G_\theta}
\newcommand{\Rt}{R^\theta}
\newcommand{\Qt}{Q^\theta}
\newcommand{\dt}{\delta^\theta}
\newcommand{\lt}{l^\theta}	
\newcommand{\wtheta}{\tilde{\theta}}
\newenvironment{axioms}[1]{\begin{enumerate}[label={\sffamily\bfseries({#1}\arabic*)}, ref={({#1}\arabic*)}, leftmargin=*]}{\end{enumerate}}
\theoremstyle{plain}
  \newtheorem{theorem}{Theorem}[section]
  \newtheorem{lemma}[theorem]{Lemma}
  \newtheorem{lem}[theorem]{Lemma}
  \newtheorem{proposition}[theorem]{Proposition}
  \newtheorem{prop}[theorem]{Proposition}
  \newtheorem{cor}[theorem]{Corollary}
  \newtheorem{thm*}{Theorem}
\theoremstyle{definition}
  \newtheorem{defnPLAIN}[theorem]{Definition}
  \newtheorem{example}[theorem]{Example}
  \newtheorem*{defn*}{Definition}
  \newtheorem{remark}[theorem]{Remark}
\newenvironment{defn}{\pushQED{\qed}\begin{defnPLAIN}
}{\popQED\end{defnPLAIN}}
\renewcommand\section{\@startsection {section}{1}{\z@}%
                                   {-3.5ex \@plus -1ex \@minus -.2ex}%
                                   {2.3ex \@plus.2ex}%
                                   {\boldmath\normalfont\Large\bfseries}}
\renewcommand\subsection{\@startsection{subsection}{2}{\z@}%
                                     {-3.25ex\@plus -1ex \@minus -.2ex}%
                                     {1.5ex \@plus .2ex}%
                                     {\boldmath\normalfont\large\bfseries}}
\renewcommand\subsubsection{\@startsection{subsubsection}{3}{\z@}%
                                     {-3.25ex\@plus -1ex \@minus -.2ex}%
                                     {1.5ex \@plus .2ex}%
                                     {\boldmath\normalfont\normalsize\bfseries}}
\renewcommand\paragraph{\@startsection{paragraph}{4}{\z@}%
                                    {3.25ex \@plus1ex \@minus.2ex}%
                                    {-1em}%
                                    {\boldmath\normalfont\normalsize\bfseries}}
\renewcommand\subparagraph{\@startsection{subparagraph}{5}{\parindent}%
                                       {3.25ex \@plus1ex \@minus .2ex}%
                                       {-1em}%
                                      {\boldmath\normalfont\normalsize\bfseries}}
\newcommand\Defn[1]{{\boldmath\bfseries#1}}
\begin{document}

\title{Abstract involutions of algebraic groups \\ and of Kac--Moody groups}

\author{Ralf Gramlich\thanks{The first author gratefully acknowledges a Heisenberg fellowship by the Deutsche Forschungsgemeinschaft.} \\ Max Horn \\ Bernhard M\"uhlherr}

\maketitle

\begin{abstract}
Based on the second author's thesis \cite{Horn:2008b} in this article we provide a uniform treatment of abstract involutions of
algebraic groups and of Kac--Moody groups using twin buildings, RGD systems,
and twisted involutions of Coxeter groups. Notably we simultaneously 
generalize the double coset decompositions established in
\cite{Springer:1984}, \cite{Helminck/Wang:1993} for algebraic groups and in
\cite{Kac/Wang:1992} for certain Kac--Moody groups, we analyze the
filtration studied in \cite{Devillers/Muehlherr:2007} in the context of arbitrary
involutions, and we answer a structural question on the combinatorics of
involutions of twin buildings raised in 
\cite{Bennett/Gramlich/Hoffman/Shpectorov:2003}. 
\end{abstract}


\section{Introduction}

For the study and the classification of reductive symmetric $k$-varieties, i.e., homogeneous spaces $G_k/H_k$, where $G$ is a reductive algebraic group defined over $k$ and $H$ an open subgroup of the group of fixed points of a $k$-involution $\theta$ of $G$, it is crucial to have a precise description of involutions and their fine structure of restricted root systems with multiplicities and Weyl groups.
A full classification of symmetric $k$-varieties for arbitrary $k$ depends on the understanding of three invariants (cf.\ \cite[Section 1]{Helminck:2000}): the classification of Satake diagrams, of isomorphism classes of involutions of anisotropic groups, and of isomorphism classes of quadratic elements. 

Over the reals this classification is well known (\cite{Berger:1957}, \cite[Chapter X]{Helgason:1978}, \cite{Helminck:1988}). The key feature of the real numbers and, more generally, the real closed fields  that makes symmetric varieties over these fields classifiable is that they have a finite codegree within the field of complex numbers. Since by the Artin--Schreier theorem (\cite[Theorem 11.14]{Jacobson:1989}) admitting an algebraically closed field as an extension field of finite degree is a characteristic feature of real closed fields, it is therefore not surprising that many difficult problems arise when studying rationality properties of symmetric $k$-varieties for non-real closed fields $k$, as documented in \cite{Helminck/Wang:1993}.
One main focus of \cite{Helminck/Wang:1993} are symmetric varieties over non-archimedean local fields, which makes the theory of Bruhat--Tits buildings (\cite{Bruhat/Tits:1972}, \cite{Bruhat/Tits:1984}) applicable in this context, as illustrated in \cite{Benoist/Oh:2007} and \cite{Delorme/Secherre}.

\bigskip
The purpose of the present paper is to use the theory of groups with an RGD system (see \cite[Section 3.3]{Tits:1992} and also \cite[Definition 7.82 and Section 8.6.1]{Abramenko/Brown:2008}), which generalizes the theories of connected isotropic reductive linear algebraic groups (Remark \ref{remalg}) and of split Kac--Moody groups (Remark \ref{remkm}), in order to study involutions of groups with algebraic origin uniformly. It turns out that many properties of $k$-involutions of connected reductive $k$-groups (established in \cite{Helminck/Wang:1993} over arbitrary infinite fields $k$ of characteristic distinct from two) and of algebraic involutions of split Kac--Moody groups (established in \cite{Kac/Wang:1992} over arbitrary algebraically closed fields of characteristic zero) are in fact true for arbitrary abstract involutions of arbitrary groups with an RGD system of ``characteristic'' distinct from two. (Here we use the word characteristic in the sense of the geometric and algebraic properties studied in Section \ref{sec:$2$-divisible-rootgrps}; see Remarks \ref{algchar2} and \ref{prop:mouf-2-divis}.) The abstract approach we describe in this article is possible because of the existing classifications of the abstract automorphisms of connected isotropic reductive linear algebraic groups (\cite[Proposition 7.2]{Borel/Tits:1973}) and of split Kac--Moody groups (\cite[Theorem 4.1]{Caprace:2009}); cf.\ Remark \ref{abstractpossible} below.

\bigskip
One of our main results is a double coset decomposition that simultaneously generalizes \cite[Proposition~6.10]{Helminck/Wang:1993} and \cite[Proposition~5.15]{Kac/Wang:1992}. (We refer to Sections \ref{basics} and \ref{sec:flip-intro} for definitions.)

\bigskip \noindent {\bf Theorem \ref{cor:DCD-A-twodivis}.}
{\em Let $(G,\Uals,T)$ be an RGD system, let $\mcC$ be the associated twin
  building, let $\theta$ be a quasi-flip of $G$, let $B$ be a Borel subgroup
  of $G$, let $\Gt$ be the subgroup of $G$ of $\theta$-fixed elements, and let
  $\{ \Sigma_i \mid i \in I\}$ be a set of representatives of the
  $\Gt$-conjugacy classes of $\theta$-stable twin apartments of $\mcC$. If the
  root groups are uniquely $2$-divisible or if $G$ is split algebraic/Kac--Moody and $\theta$ is a semi-linear flip, then
\[\Gt \backslash G / B \isomorph \bigcup_{i \in I} W_{\Gt}(\Sigma_i) \backslash W_{G}(\Sigma_i) , \] 
where $W_{X}(\Sigma_i) := \Stab_X(\Sigma_i) / \Fix_X(\Sigma_i)$.}

\bigskip
The image under the above $1$-$1$ correspondence of a double coset $\Gt g B$ is determined by the unique $\Gt$-conjugacy class of $\theta$-stable twin apartments containing the chamber $gB$ represented by one of the $\Sigma_i$ (cf.\ Proposition \ref{thm:theta-stable-apt} and Lemma \ref{lem:intersecting-theta-stable-apts-Gt-conj}), and the $\Stab_{\Gt}(\Sigma_i)$-orbit structure of $\Sigma_i$. This generalizes the observation $\Gt \backslash G / B \isomorph \Inv(W)$ made in \cite{Gramlich/Muehlherr:unpub-lattices} in the case of a split Kac--Moody group $G$ defined over a finite field $\F_{q^2}$ and an $\F_{q^2}$-semi-linar flip $\theta$ of $G$; cf.\ Corollary \ref{cor}. In this case the $1$-$1$ correspondence is given by $\Gt gB \mapsto \dt(gB)$ (see Definition \ref{theta-cod}).

\bigskip
The situation when $|\Gt \backslash G / B| = 1$, i.e., when $G$ admits an Iwasawa decomposition $G = \Gt B$, has been studied in detail in \cite{Medts/Gramlich/Horn}. The main result of that paper is a characterization of the fields $\F$ such that some split algebraic group over $\F$ or $\F$-locally split Kac--Moody group admits an Iwasawa decomposition.

\bigskip
Sections \ref{sec:strong-flips} and \ref{sec:flip-flop-residual-conn} constitute the building-theoretic heart of the present article. We adapt the filtration studied in \cite{Devillers/Muehlherr:2007} to the situation of non-strong quasi-flips (cf.\ Definition \ref{non-strong}) and provide a local-to-global result on the structure of flip-flop systems (cf.\ Definition \ref{dfn-flipflop}). A combination of our findings with a result by Hendrik Van Maldeghem and the second author (see Theorem \ref{hvm})  yields our second main result, which answers a question posed in \cite{Bennett/Gramlich/Hoffman/Shpectorov:2003} in the most relevant cases. 

\bigskip \noindent {\bf Theorem \ref{thm:quite-many-flips-are-geometric}.}
{\em Let 
\begin{itemize}
\item $(G,\Uals,T)$ be an RGD system of $2$-spherical type with finite root groups $\Uals$ of odd order and of cardinality at least five, or 
\item let $\mathbb{F}$ be an infinite field of characteristic distinct from two and let $G$ be a connected $\F$-split reductive
$\F$-group or a split Kac--Moody
group over $\F$ of $2$-spherical type $(W,S)$.
\end{itemize}
 Moreover, let $\mcC$ be the associated twin building, and let $\theta$ be a quasi-flip of $G$.

 Then the flip-flop system $\Ct$ is connected and equals the union of the minimal Phan residues of $\mcC_\eps$. Moreover, there exists $K \subseteq S$, such that $(\mcC, \theta)$ is $K$-homogeneous. Furthermore, the $K$-residue chamber system $\Ct_K$ is connected and residually connected. In particular, if $\theta$ is proper, then $(\mcC, \theta)$ is $\emptyset$-homogeneous, and $\Ct$ is residually connected.
}

\bigskip
Standard methods from geometric group theory allow us to conclude the following finite generation result from the preceding theorem.

\bigskip \noindent {\bf Theorem \ref{thm:loc-fin-KM-is-fin-gen}.}
{\em   Let 
$(G,\Uals,T)$ be an RGD system of $2$-spherical type with finite root groups $\Uals$ of odd order and of cardinality at least five, and let $\theta$ be a quasi-flip of $G$.
Then the group $\Gt$ is finitely generated.}

\bigskip
The proofs of Theorems \ref{thm:quite-many-flips-are-geometric} and \ref{thm:loc-fin-KM-is-fin-gen} are based on abstract building-theoretic arguments and a local analysis of the rank two case. This local analysis has been conducted by Hendrik Van Maldeghem and the second author for all finite and for all split Moufang polygons with uniquely $2$-divisible root groups (see Theorem \ref{hvm} on page \pageref{hvm}). We do not see any reason why this local analysis should fail for arbitrary Moufang polygons with sufficiently large uniquely $2$-divisible root groups, but currently there is no formal proof available.

\subsubsection*{Acknowledgement}

The authors thank Aloysius Helminck for several very encouraging discussions on the topic of this article. The authors also thank Tonny Springer and Wilberd van der Kallen for pointing out to them the work by Aloysius Helminck in the first place.


\section{Basics} \label{basics}

\subsubsection*{RGD systems}

Let $G$ be a group endowed with a family $\Uals$ of subgroups, indexed by a root system $\Phi$ of type $(W,S)$, and let $T$ be another subgroup of $G$.
Following \cite[Definition~7.82 and Section~8.6.1]{Abramenko/Brown:2008} (also \cite[Section~3.3]{Tits:1992} and \cite[Section~2]{Caprace/Remy:2008-LN}) the triple $(G, \Uals, T)$ is called an {RGD system of type $(W,S)$} if it satisfies the following, where $U_\pm:=\gen{U_\alpha\mid \alpha\in\Phi_\pm}$:
\begin{axioms}{RGD}
\setcounter{enumi}{-1}   
\item For all $\alpha\in\Phi$, we have $U_\alpha\neq\{1\}$.
\item For every prenilpotent pair $\{\alpha,\beta\}\subset\Phi$ of distinct roots, we have $[U_\alpha,U_\beta]\subset\gen{U_\gamma\mid \gamma \in ]\alpha,\beta[}$.
\item For every $s\in S$ and $u\in U_{\alpha_s}\setminus\{1\}$, there exist elements $u',u''$ of $U_{-\alpha_s}$ such that the product $\mu(u):=u'uu''$ conjugates $U_\beta$ onto $U_{s(\beta)}$ for each $\beta\in\Phi$.
\item For all $s\in S$ we have $U_{-\alpha_s} \not\subseteq U_+$.
\item $G=T.\gen{U_\alpha\mid\alpha\in\Phi}$.
\item $T$ normalizes $U_\alpha$ for each $\alpha\in\Phi$, i.e., 
  $T \leq \bigcap_{\alpha\in\Phi} N_G(U_\alpha)$. 
\end{axioms}
The groups $U_\alpha$ are called the \Defn{root subgroups} of $G$ and the pair $(\Uals,T)$ is referred to as a \Defn{root group datum}.

\medskip
Define $X_\alpha:=\gen{U_\alpha, U_{-\alpha}}$ and $X_{\alpha,\beta}:=\gen{X_\alpha,X_\beta}$. A root group datum is called \Defn{locally split}, if the group $T$ is abelian and if, for each $\alpha \in \Phi$, there is a field $\F_\alpha$ such that $X_\alpha$ is isomorphic to $\mathrm{(P)SL}_2(\F_\alpha)$ and $\{ U_\alpha, U_{-\alpha} \}$ is isomorphic to its natural root group datum of rank one.
A locally split root group datum is called {\bf $\F$-locally split}, if $\F_\alpha = \F$ for all $\alpha \in \Phi$.

\begin{remark} \label{remalg}
Let $G$ be a connected reductive linear algebraic group defined over an infinite field $\F$ and let $G(\F)$ denote the subgroup of $\F$-rational points of $G$; cf.\ \cite{Humphreys:1975}, \cite{Borel:1991}, \cite{Springer:1998}.
Assume that $G$ is isotropic over $\F$ and let $T$ be a maximal $\F$-split $\F$-torus. By \cite{Borel/Tits:1965} there exists a family of root groups $\Uals$, indexed by the relative root system $\Phi$ of $(G(\F),T(\F))$, such that $(G(\F), \Uals, T(\F))$ is an RGD system.
For details see also \cite[Section 6]{Bruhat/Tits:1972}, \cite[Section~7.9]{Abramenko/Brown:2008}. 
\end{remark}

\begin{example} \label{An}
Let $n \geq 2$ and let $(W,S)$ be the Coxeter system of type $A_{n-1}$, i.e., let $W \cong S_n$ be the symmetric group on $n$ letters and let $S = \{ s_1, ..., s_{n-1} \}$ be its standard generating set consisting of adjacent transpositions. The root system $\Phi$ of type $(W,S)$ has one root $\alpha_{i,j}$ for each ordered pair $(i,j)$ with $1 \leq i, j \leq n$ and $i \neq j$; cf.\ \cite[Examples 1.119, 3.52]{Abramenko/Brown:2008}, \cite[Section~2.10]{Humphreys:1990}.
Let 
\begin{itemize}
\item $\mathbb{F}$ be a field, 
\item $G = \mathrm{GL}_n(\mathbb{F})$, considered as the group of invertible $(n \times n)$-matrices over $\mathbb{F}$,
\item $T := \{ \mathrm{diag}(\lambda_1,...,\lambda_n) \mid \lambda_i \in \mathbb{F}^* \}$ be the group of invertible diagonal $(n \times n)$-matrices over $\mathbb{F}$,
\item $U_{\alpha_{i,j}} := \{ 1_{n \times n} + \lambda e_{ij} \mid \lambda \in \mathbb{F} \}$, where $1_{n \times n}$ denotes the identity $(n \times n)$-matrix and $e_{ij}$ the matrix whose $(i,j)$-entry is $1$ and all other entries are $0$.
\end{itemize}
Then the triple $(G,\{ U_{\alpha_{i,j}} \}_{\alpha_{i,j} \in \Phi}, T)$ is an $\mathbb{F}$-locally split RGD system of type $(W,S)$; cf.\ \cite[Example 7.133]{Abramenko/Brown:2008}. 
Replacing $G$ by $G' = \mathrm{SL}_n(\mathbb{F})$ and $T$ by $T' := \{ \mathrm{diag}(\lambda_1,...,\lambda_{n-1},\lambda_1^{-1}\cdots\lambda_{n-1}^{-1}) \mid \lambda_i \in \mathbb{F}^* \}$ also yields an $\mathbb{F}$-locally split RGD system $(G',\{ U_{\alpha_{i,j}} \}_{\alpha_{i,j} \in \Phi}, T')$ of type $(W,S)$. 
\end{example}

\begin{remark} \label{remkm}
Let $\mathcal{D} = (I,A,\Lambda,(c_i)_{i\in I})$ be a Kac--Moody root datum, let $\mathcal{F} = (\mathcal{G},(\phi_i)_{i \in I},\eta)$ be the basis of a Tits functor $\mathcal{G}$ of type $\mathcal{D}$, let $\mathbb{F}$ be a field, and let $G := \mathcal{G}(\mathbb{F})$ be the corresponding split Kac--Moody group; see \cite{Tits:1987} and also \cite{Tits:1992}, \cite[Part II]{Remy:2002}, \cite{Caprace:2009}. 
Let $M(A) = (m_{ij})_{i,j \in I}$ be the Coxeter matrix defined by $m_{ii} := 1$ and, for $i \neq j$, by $m_{ij} := 2, 3, 4, 6$ or by $m := \infty$ according to whether the product $A_{ij}A_{ji}$ equals $0$, $1$, $2$, $3$ or is greater or equal $4$. Let $(W,S)$ be a Coxeter system of type $M(A)$ with $S = \{ s_i \mid i \in I\}$, let $\Phi$ be its root system, and let $\Pi = \{ \alpha_i \mid i \in I \}$ be a system of fundamental roots of $\Phi$ such that, for each $i \in I$, the reflection associated with $\alpha_i$ is $s_i$. For each $i \in I$ let $U_{\alpha_i}$ (resp.\ $U_{-\alpha_i}$) be the image under $\phi_i(\F)$ of the subgroup of upper (resp.\ lower) triangular unipotent matrices.
Then, via conjugation of the $U_{\pm \alpha_i}$ with elements of $W$, there exists a family of root groups $\{ U_\alpha \}_{\alpha\in \Phi^{\mathrm{re}}}$ such that $(G(\F), \{ U_\alpha \}_{\alpha\in \Phi^{\mathrm{re}}}, T)$ is an RGD system, where $T := \bigcap_{\alpha\in \Phi^{\mathrm{re}}} N_{G(\F)}(U_\alpha)$; cf.\ \cite{Tits:1987}, \cite{Tits:1992}, \cite[Proposition 8.4.1]{Remy:2002}, \cite[Lemma 1.4]{Caprace:2009}.

Almost split Kac--Moody groups satisfying the Galois descent property defined in \cite[12.1.1]{Remy:2002} also admit a root group datum by \cite[Theorem~12.6.3]{Remy:2002}. See \cite{Muehlherr:1999} for additional examples of Kac--Moody groups with a root group datum.
\end{remark}

\begin{example}\label{Antilde}
Let $n \geq 2$, let $(W,S)$ be the Coxeter system of type $A_{n-1}$, and let $\Phi$ be the root system of type $A_{n-1}$ as in Example \ref{An}. Moreover, let $(\tilde W, \tilde S)$ be an affine Coxeter system of type $\tilde A_{n-1}$. Furthermore, let 
\begin{itemize}
\item $\mathbb{F}$ be a field, 
\item $G = \mathrm{GL}_n(\mathbb{F}[t,t^{-1}])$, considered as the group of invertible $(n \times n)$-matrices over the ring of Laurent polynomials $\mathbb{F}[t,t^{-1}]$,
\item $T := \{ \mathrm{diag}(\lambda_1,...,\lambda_n) \mid \lambda_i \in \mathbb{F}^* \}$ be the group of invertible diagonal $(n \times n)$-matrices over $\mathbb{F}$,
\item $U_{\alpha_{i,j},d} := \{ 1_{n \times n} + \lambda t^d e_{ij} \mid \lambda \in \mathbb{F} \}$, where $d \in \mathbb{Z}$, and $1_{n \times n}$ denotes the identity $(n \times n)$-matrix and $e_{ij}$ the matrix whose $(i,j)$-entry is $1$ and all other entries are $0$.
\end{itemize}
Then the triple $(G,\{ U_{\alpha_{i,j},d} \}_{\alpha_{i,j} \in \Phi, d \in \mathbb{Z}}, T)$ is an $\mathbb{F}$-locally split RGD system of type $(\tilde W, \tilde S)$.
Again, replacing $G$ by $G' = \mathrm{SL}_n(\mathbb{F}[t,t^{-1}])$ and $T$ by $T' := \{ \mathrm{diag}(\lambda_1,...,\lambda_{n-1},\lambda_1^{-1}\cdots\lambda_{n-1}^{-1}) \mid \lambda_i \in \mathbb{F}^* \}$ also yields an $\mathbb{F}$-locally split RGD system $(G',\{ U_{\alpha_{i,j},d} \}_{\alpha_{i,j} \in \Phi, d \in \mathbb{Z}}, T')$ of type $(\tilde W, \tilde S)$; cf.\ \cite[Section~2.7]{Caprace/Remy:2008-LN}, \cite[page~17, Example~2]{Abramenko:1996}, \cite[Section~8.3.2]{Abramenko/Brown:2008}. 
\end{example}

\subsubsection*{(Twin) $BN$-pairs}

Following \cite[Section 6.2]{Abramenko/Brown:2008} (also \cite[Chapter IV]{Bourbaki:1968}, \cite{Bourbaki:2002}, \cite{Tits:1974}) a pair of subgroups $B$ and $N$ of a group $G$ is called a \Defn{$BN$-pair}, if $B$ and $N$ generate $G$, the intersection $T:=B\cap N$ is normal in $N$, and the quotient group $W:=N/T$ admits a set $S$ of generators such that the following conditions hold:
\begin{axioms}{BN}
\item $wBs \subseteq BwsB\cup BwB$ for all $w \in W$, $s \in S$, and
\item $sBs^{-1} \not\subset B$ for all $s \in S$.
\end{axioms}
The group $W$ is called the \Defn{Weyl group} associated to the $BN$-pair.
The quadruple $(G,B,N,S)$ is also called a \Defn{Tits system}.

\medskip
A group $G$ with a $BN$-pair admits a Bruhat decomposition $G = \bigsqcup_{w \in W} BwB$; cf.\ \cite[Theorems 6.17 and 6.56]{Abramenko/Brown:2008}. For each $J \subset S$ the set $P_J := \bigsqcup_{w \in W_J} BwB$ is a subgroup of $G$, the \Defn{standard parabolic subgroup} of type $J$. The pair $(W,S)$ is a Coxeter system.

\medskip
Let $(G,B_+,N,S)$ and $(G,B_-,N,S)$ be two Tits systems such that $B_+ \cap N=B_- \cap N$. Following \cite[Section 3.2]{Tits:1992}, \cite[Section 6.3.3]{Abramenko/Brown:2008} the triple $(B_+,B_-,N)$ is called a \Defn{twin $BN$-pair}, if the following conditions are satisfied:
\begin{axioms}{TBN}
\item $B_\eps w B_{-\eps} s B_{-\eps} = B_\eps w s B_{-\eps}$ for $\eps \in \{+,-\}$ and all $w \in W$, $s \in S$ such that $l(ws) < l(w)$, and
\item $B_+s \cap B_- = \emptyset$ for all $s \in S$.
\end{axioms}
A twin $BN$-pair is called \Defn{saturated}, if $B_+ \cap B_- = T$.
A group $G$ with a twin $BN$-pair admits the {Birkhoff decomposition} 
$G = \bigsqcup_{w \in W} B_{\eps} wB_{-\eps}$, where $\eps\in \{+,-\}$; cf.\ \cite[Proposition 6.81]{Abramenko/Brown:2008}.
The tuple $(G,B_+,B_-,N,S)$ is called a \Defn{twin Tits system}.

\begin{remark}
By \cite[Lemma~6.85]{Abramenko/Brown:2008}, if $(B_+,B_-,N)$ is a saturated twin $BN$-pair in a group $G$ and if $N'$ is an arbitrary subgroup of $G$, then $(B_+,B_-,N')$ is a twin $BN$-pair if and only if $N'(B_+ \cap B_-)=N$. Moreover, by the same lemma, if a pair $B_+$, $B_-$ of subgroups of $G$ is part of a $BN$-pair, then there is a unique subgroup $N \leq G$ such that $(B_+,B_-,N)$ is a saturated twin $BN$-pair. It is therefore only a very mild condition for a twin $BN$-pair to be saturated. 
\end{remark}

By \cite[Proposition 4]{Tits:1992}, \cite[Theorem 8.80]{Abramenko/Brown:2008}, if $(G,\Uals,T)$ is an RGD system of type $(W,S)$, then for
\begin{eqnarray*}
N & := & T.\gen{\mu(u) \mid u \in U_\alpha \backslash \{ 1 \}, \alpha \in \Pi }, \\
B_+ & := & T.U_+, \\
B_- & := & T.U_-,
\end{eqnarray*}
the tuple $(G,B_+,B_-,N,S)$ is a saturated twin $BN$-pair of $G$ with Weyl group $N/T\isomorph W$. It is called the twin $BN$-pair \Defn{associated} to the root group datum.

\begin{example} \label{An2}
Again, let $n \geq 2$, let $(W,S)$ be the Coxeter system of type $A_{n-1}$, and let $\Phi$ be the root system $\Phi$ of type $(W,S)$. As discussed in Example \ref{An}, $\Phi$  has one root $\alpha_{i,j}$ for each ordered pair $(i,j)$ with $1 \leq i, j \leq n$ and $i \neq j$. Define $\Phi_+ := \{ \alpha_{i,j} \in \Phi \mid i < j \}$ and $\Phi_- := \{ \alpha_{i,j} \in \Phi \mid i > j \}$.
Moreover, also as in Example \ref{An}, let
\begin{itemize}
\item $\mathbb{F}$ be a field, 
\item $G = \mathrm{GL}_n(\mathbb{F})$, considered as the group of invertible $(n \times n)$-matrices over $\mathbb{F}$,
\item $T := \{ \mathrm{diag}(\lambda_1,...,\lambda_n) \mid \lambda_i \in \mathbb{F}^* \}$,
\item $U_{\alpha_{i,j}} := \{ 1_{n \times n} + \lambda e_{ij} \mid \lambda \in \mathbb{F} \}$.
\end{itemize}
Then for
\begin{itemize}
\item $U_+ = \langle U_{\alpha_{i,j}} \mid \alpha_{i,j} \in \Phi_+ \rangle = \langle U_{\alpha_{i,j}} \mid \alpha_{i,j} \in \Phi, i < j \rangle$ is the group of strictly upper triangular $(n \times n)$-matrices over $\mathbb{F}$,
\item $B_+ = T.U_+$ is the group of upper triangular $(n \times n)$-matrices over $\mathbb{F}$,
\item $U_- = \langle U_{\alpha_{i,j}} \mid \alpha_{i,j} \in \Phi_- \rangle = \langle U_{\alpha_{i,j}} \mid \alpha_{i,j} \in \Phi, i > j \rangle$ is the group of strictly lower triangular $(n \times n)$-matrices over $\mathbb{F}$,
\item $B_- = T.U_-$ is the group of lower triangular $(n \times n)$-matrices over $\mathbb{F}$,
\item $N$ is the group of invertible monomial $(n \times n)$-matrices over $\mathbb{F}$
\end{itemize}
the tuple $(G,B_+,B_-,N,S)$ is a twin Tits system, and the tuple $(B_+,B_-,N)$ is a saturated twin $BN$-pair.
\end{example}

\begin{example} \label{Antilde2}
Once more, let $n \geq 2$, let $(W,S)$ be the Coxeter system of type $A_{n-1}$, and let $\Phi$ be the root system of type $A_{n-1}$. As in Example \ref{Antilde} let
\begin{itemize}
\item $\mathbb{F}$ be a field, 
\item $G = \mathrm{GL}_n(\mathbb{F}[t,t^{-1}])$, considered as the group of invertible $(n \times n)$-matrices over the ring of Laurent polynomials $\mathbb{F}[t,t^{-1}]$,
\item $T := \{ \mathrm{diag}(\lambda_1,...,\lambda_n) \mid \lambda_i \in \mathbb{F}^* \}$,
\item $U_{\alpha_{i,j},d} := \{ 1_{n \times n} + \lambda t^d e_{ij} \mid \lambda \in \mathbb{F} \}$, where $d \in \mathbb{Z}$.
\end{itemize}
Then for
\begin{itemize}
\item $B_+ = T.U_+ = T \langle U_{\alpha_{i,j},d} \mid \alpha_{i,j} \in \Phi, d \in \mathbb{Z}, (i < j \Rightarrow d \geq 0), (i > j \Rightarrow d > 0) \rangle$ is the group of invertible $(n \times n)$-matrices over the ring of polynomials $\mathbb{F}[t]$ which modulo $t$ are upper triangular,
\item $B_- = T.U_- = T \langle U_{\alpha_{i,j},d} \mid \alpha_{i,j} \in \Phi, d \in \mathbb{Z}, (i < j \Rightarrow d < 0), (i > j \Rightarrow d \leq 0) \rangle$ is the group of invertible $(n \times n)$-matrices over the ring of polynomials $\mathbb{F}[t^{-1}]$ which modulo $t^{-1}$ are lower triangular,
\item $N$ is the group of invertible monomial $(n \times n)$-matrices over the ring of Laurent polynomials $\mathbb{F}[t,t^{-1}]$,
\end{itemize}
the tuple $(G,B_+,B_-,N,S)$ is a twin Tits system.
\end{example}

\subsubsection*{Chamber systems}

Let $I$ be a finite set. A \Defn{chamber system} over $I$ is a pair
\mbox{$\mcC= (C, (\sim_i)_{i \in I})$}, where $C$ is a non-empty set
whose elements are called \Defn{chambers} and where each $\sim_i$, $i \in I$, is an equivalence relation on the set of chambers; see \cite{Tits:1981}, \cite[Section 5.2]{Abramenko/Brown:2008}, \cite[Chapter 3]{Buekenhout/Cohen}

The \Defn{rank} of a chamber system of type $I$ is the cardinality of $I$.
Given $i \in I$ and $c,d \in C$,
then $c$ is called \Defn{$i$-adjacent} to $d$, if $c \sim_i d$.
The chambers $c$, $d$ are called \Defn{adjacent}, if
they are $i$-adjacent for some $i \in I$.

A \Defn{gallery} in $\mcC$ is a finite sequence $(c_0,c_1,\ldots,c_k)$  such that $c_{\mu} \in C$  for all $0 \leq \mu \leq k$ and such that $c_{\mu-1}$ is adjacent to $c_{\mu}$ for all $1 \leq \mu \leq k$. The number $k$ is called the \Defn{length} of the gallery. 

For a subset $J \subseteq I$ a \Defn{$J$-gallery} is a gallery 
$G =  (c_0,c_1,\ldots,c_k)$ such that for each $1 \leq \mu \leq k$
there exists an index $j \in J$ with
$c_{\mu - 1} \sim_j c_{\mu}$.
Given two chambers $c$, $d$, we say that $c$ is \Defn{$J$-equivalent} to $d$, if there exists a $J$-gallery joining $c$ and $d$; we write $c \sim_J d$ in this case.
Note that since $\sim_i$ is an equivalence relation, $c$ and $d$ are $i$-adjacent if and only if they are $\{ i \}$-equivalent.

Given a chamber $c$ and a subset $J$ of $I$, the set
$R_J(c) := \{ d \in C \mid c \sim_J d \}$ is called the
\Defn{$J$-residue} of $c$.
If $J = \{ i \}$, then $R_J(c)$ is called the \Defn{$i$-panel} of $c$ (or the $i$-panel containing $c$); a \Defn{panel} is an $i$-panel for some $i \in I$.
Note that $(R_J(c), (\sim_j)_{j \in J})$ is a connected chamber system over $J$.

Let $\mcC$ be a chamber system over $I$ and let $K \subseteq I$. Then the \Defn{$K$-residue chamber system} $\mcC_K$ over $I':=I\setminus{K}$ is defined as follows: Its chambers are the $K$-residues $R_K(c)$ in $\mcC$ and, for $i\in I'$, two chambers $R_K(c)$, $R_K(d)$ of $\mcC_K$ are $i$-adjacent if and only if both are contained in the same $(K\cup\{i\})$-residue. In other words, for $J \subseteq I'$, the $J$-residues of $\mcC_K$ are precisely the $(J \cup K)$-residues of $\mcC$.

A chamber system $\mcC$ over $I$ is called \Defn{residually connected} (see \cite[Chapter 3]{Buekenhout/Cohen}), if for each $J \subseteq I$ and for every family of residues $(R_{I\setminus\{j\}})_{j\in J}$ with pairwise non-trivial intersection, the intersection $\bigcap_{j\in J} R_{I\setminus\{j\}}$ is an $(I\setminus J)$-residue, i.e., $\bigcap_{j\in J} R_{I\setminus\{j\}}$ is a non-empty connected induced chamber system over $I\setminus J$. 

Let $\mcC=(C, (\sim_i)_{i \in I})$ be a chamber system and let $C' \subseteq C$. The chamber system $\mcC' = (C',({\sim_i}_{|C' \times C'})_{i \in I})$ is called the sub-chamber system of $\mcC$ \Defn{induced} on $C'$.
If $\mcC=(C, (\sim_i)_{i \in I})$ is a chamber system with an induced sub-chamber system $\mcC'= (C',({\sim_i}_{|C' \times C'})_{i \in I})$ and a distinguished subset $X \subseteq C'$, the chamber system $\mcC'$ \Defn{inherits connectedness from $\mcC$ within $X$}, if two chambers $c,d \in X$ are connected by a $J$-gallery in $\mcC'$ if and only if they are connected by a $J$-gallery in $\mcC$. 
If $\mcC'$ inherits connectedness from $\mcC$ within $\mcC'$, then one also says that $\mcC'$ \Defn{inherits connectedness from $\mcC$}.

\subsubsection*{(Twin) buildings}

Let $(W,S)$ be a Coxeter system.
A \Defn{building} of type $(W,S)$ is a pair $(\mcC,\delta)$ where $\mcC$ is a non-empty set and
$\delta : \mcC \times \mcC \to W$ is a \Defn{distance
function}
satisfying the following axioms, where $x,y \in  \mcC$ and
$w = \delta(x,y)$:

\begin{axioms}{Bu}
\item $w = 1$ if and only if $x = y$,
\item if $z \in  \mcC$ is such that $\delta(y,z) = s \in S$,
then $\delta(x,z) \in \{ w, ws \}$, and if furthermore $l(ws) = l(w) + 1$,
then $\delta(x,z) = ws$, and
\item if $s \in S$, there exists $z \in  \mcC$ such that
$\delta(y,z) = s$ and $\delta(x,z) = ws$.
\end{axioms}
For detailed information on buildings refer to \cite{Tits:1974}, \cite{Ronan:1989}, \cite{Abramenko/Brown:2008}.

\begin{remark} \label{chambersystembuilding}
For a building $(\mcC,\delta)$ of type $(W,S)$ and $s\in S$, we define a relation $\sim_s$, where $c,d\in\mcC$ are $s$-equivalent, i.e., $c\sim_s d$, if and only if $\delta(c,d)\in\{1_W,s\}$. From the axioms above it follows that this is in fact an equivalence relation, and $( \mcC,(\sim_s)_{s \in S})$ is a chamber system (see \cite[Section 5.1.1]{Abramenko/Brown:2008}).
By \cite[Theorem 2]{Tits:1981} it is possible to reconstruct the building and its distance function from this chamber system and, hence, one does not need to distinguish between the building and its chamber system. In particular, one may speak of galleries, residues and panels of a building.
\end{remark}

\begin{example} \label{An3}
Let $n \geq 2$, let $(W,S)$ be a Coxeter system of type $A_{n-1}$, i.e., as in Example \ref{An} let $W \cong S_n$ be the symmetric group on $n$ letters and let $S = \{ s_1, ..., s_{n-1} \}$ be its standard generating set consisting of adjacent transpositions, let $\mathbb{F}$ be a field, and let $V$ be an $n$-dimensional vector space over $\mathbb{F}$. Define $\mathcal{C} := \{ (V_1, ..., V_{n-1}) \subset V^{n-1} \mid \{ 0 \} \neq V_1 \lneq V_2 \lneq \cdots \lneq V_{n-1} \lneq V \}$ to be the set of maximal flags of non-trivial proper $\mathbb{F}$-subspaces of $V$. For $1 \leq i \leq n-1$, define two maximal flags $(V_1, ..., V_{n-1})$ and $(V_1', ..., V_{n-1}')$ to be $s_i$-adjacent if and only if for all $1 \leq j \leq n-1$ with $j \neq i$ one has $V_j = V_j'$. Then $(\mathcal{C},(\sim_{s_i})_{1 \leq i \leq n-1})$, where $\sim_{s_i}$ denotes the equivalence relation of $s_i$-adjacency, is a chamber system, which gives rise to a building of type $(W,S)$ via Remark \ref{chambersystembuilding}; cf.\ \cite[Definition~4.25]{Abramenko/Brown:2008}.
\end{example}

\begin{remark}\label{Titssystem}
A Tits system $(G, B, N, S)$ leads to a building whose set of chambers equals $G/B$ and whose distance function $\delta : G/B \times G/B \to W$ is given by $\delta(gB,hB) = w$ if and only if $Bg^{-1}hB = BwB$; see \cite[p.\ 320, item (4)]{Abramenko/Brown:2008}. 
\end{remark}

\begin{example} \label{An35}
Let $G = \mathrm{GL}_n(\mathbb{F})$ with its Tits system $(G,B_+,N,S)$ from Example \ref{An2} and let $(\mathcal{C},(\sim_{s_i})_{1 \leq i \leq n-1})$ be the chamber system of the building of type $(W,S)$ from Example \ref{An3} arising from the $\mathbb{F}$-vector space $V \cong \mathbb{F}^n$. 
After choosing a basis $\{ e_1, \ldots, e_n \}$ of $V$, the matrix group $G$ acts on $V$. This induces a transitive action of $G$ on the set of chambers $\mathcal{C}$ which preserves each of the equivalence relations $\sim_{s_i}$. Therefore, as the stabilizer in $G$ of the maximal flag $c := (\langle e_1 \rangle, \langle e_1, e_2 \rangle \ldots, \langle e_1, e_2, \ldots, e_{n-1} \rangle)$ equals the group $B_+$ of upper triangular matrices, by the fundamental theorem of permutation representations one indeed has $G/B_+ = G/\mathrm{Stab}_G(c) \cong \mathcal{C}$.
For $1 \leq i \leq n-1$, the $s_i$-panel of $c$ is 
\begin{eqnarray*}
& & R_{\{s_i\}}(c) \\ & = & \{ (\langle e_1 \rangle, \ldots, \langle e_1, ..., e_{i-1} \rangle, V_i, \langle e_1, ..., e_{i+1} \rangle, \ldots, \langle e_1, \ldots, e_{n-1} \rangle) \mid \langle e_1, ..., e_{i-1} \rangle \lneq V_i \lneq \langle e_1, ..., e_{i+1} \rangle \}.
\end{eqnarray*}
The stabilizer in $G$ of this $s_i$-panel equals $\bigsqcup_{w \in \langle s_i \rangle \subseteq W} B_+ w B_+$. Therefore, $gB_+ \in G/B_+ \cong \mathcal{C}$ is $s_i$-adjacent to $B_+$ (which corresponds to the chamber $c \in \mathcal{C}$) if and only if $g \in \bigsqcup_{w \in \langle s_i \rangle \subseteq W} B_+ w B_+$ which is the case if and only if $B_+gB_+ \subseteq \bigsqcup_{w \in \langle s_i \rangle \subseteq W} B_+ w B_+$. More precisely, one has $B_+gB_+ = B_+$ if and only if $g \in B_+$ and $B_+gB_+ = B_+s_iB_+$ if and only if $g \in \bigsqcup_{w \in \langle s_i \rangle \subseteq W} B_+ w B_+ \backslash B_+$.
\end{example}

\begin{remark} \label{simplicial}
Buildings can also be considered as simplicial complexes, see \cite[Chapter~4]{Abramenko/Brown:2008}. In this article we will sometimes implicitly switch to that point of view.
\end{remark}

The \Defn{rank} of a building of type  $(W,S)$ is $\abs{S}$.
A building is \Defn{thick} (resp.\ \Defn{thin}), if for any $s \in S$ and any chamber $c \in C$ there are at least three (resp.\ exactly two) chambers $s$-adjacent to $c$.
The numerical distance $l(x,y)$ of two chambers $x$ and $y$ is defined as $l(\delta(x,y))$. A building is called \Defn{spherical}, if its Coxeter system $(W,S)$ is spherical, i.e., if $W$ is finite. In a spherical building, two chambers $c,d$ are called \Defn{opposite}, if $\delta(c,d)=w_S$, the longest element of $(W,S)$.

\begin{example} \label{exmpl:coxeter-building}
Let $(W,S)$ be a Coxeter system. Define $\delta_S:W\times W\to W: (x,y)\mapsto x^{-1}y$. Then $\delta_S$ is a distance function and $(W,\delta_S)$ is a thin building of type $(W,S)$. Any thin building of type $(W,S)$ is isometric to the one given here, cf.\ \cite[Exercise 4.12]{Abramenko/Brown:2008}.
\end{example}

Suppose $(\mcC,\delta)$ is a building of type $(W,S)$. Then an \Defn{apartment} of $\mcC$ is a subset $\Sigma$ of $\mcC$, such that $(\Sigma,\delta|_\Sigma)$  is isometric to $(W,\delta_S)$ (cf.\ Example \ref{exmpl:coxeter-building}).
A \Defn{root} of an apartment $\Sigma$ (considered as a simplicial complex; cf.\ Remark \ref{simplicial}) is a subcomplex $\alpha \subseteq \Sigma$ that is the image of a reversible folding in the sense of \cite[Definition~3.48]{Abramenko/Brown:2008}. The subcomplex $-\alpha$ of $\Sigma$ generated by the chambers not in $\alpha$ is again a root, the root \Defn{opposite} to $\alpha$. A subcomplex $\alpha$ of a building (considered as a simplicial complex; cf.\ Remark \ref{simplicial}) is called a \Defn{root}, if there is an apartment $\Sigma$ such that $\alpha \subseteq \Sigma$ and $\alpha$ is a root in $\Sigma$. According to \cite[Exercise~4.61]{Abramenko/Brown:2008}, if $\alpha$ is a root in a building, then it is a root in every apartment containing it.

\begin{example} \label{An4}
Let $n \geq 2$, let $(W,S)$ be a Coxeter system of type $A_{n-1}$, let $\mathbb{F}$ be a field, let $V$ be an $n$-dimensional vector space over $\mathbb{F}$, and let $(\mathcal{C},(\sim_{s_i})_i)$ be the chamber system of the building of type $A_{n-1}$ described in Example \ref{An3}.
Moreover, let $\{ e_1, ..., e_n \}$ be a basis of $V$ and for $m \in \mathbb{N}$ define $\mathbf{m} := \{ 1, ..., m \}$. Then the set $$\{ (V_1, ..., V_{n-1}) \in \mathcal{C} \mid \mbox{ $\forall i \in \mathbf{n-1} \exists J_i \subseteq \mathbf{n}$ with $|J_i| = i$ such that $V_i = \langle e_j \mid j \in J_i \rangle$ } \}$$
defines an apartment of that building. In fact, to each apartment $\Sigma$ of the building $(\mathcal{C},(\sim_{s_i})_i)$ there exists a basis of $V$ such that $\Sigma$ can be described as above.
\end{example}

By \cite[Proposition 5.34]{Abramenko/Brown:2008} for each residue $R$ and chamber $d$ of a building $\mcC$, there exists a unique $c \in R$ such that $l(c,d) = \mathrm{min} \{l(x,d) \mid x \in R\}$, called the \Defn{projection of $d$ onto $R$} and denoted by {$\proj_R d$}. If $S$ is another residue of $\mcC$, define $\proj_R S := \{ \proj_R d\mid d\in S\}$, the \Defn{projection of $S$ onto $R$}. Note that the subset $\proj_R S$ of $R$ by \cite[Lemma 5.36]{Abramenko/Brown:2008} is a residue of $\mcC$.

\medskip
A \Defn{twin building} of type $(W,S)$ is a triple
$\mcC = ((\mcC_+,\delta_+),(\mcC_-,\delta_-),\delta^*)$ consisting of two buildings $(\mcC_+,\delta_+)$ and $(\mcC_-,\delta_-)$ of type $(W,S)$ together with a \Defn{codistance} function
$\delta^*: ( \mcC_+ \times  \mcC_-) \cup (\mcC_- \times \mcC_+)
\rightarrow W$
satisfying the following axioms, where $\eps \in \{ +,- \}$,
$x \in \mcC_{\eps}$, $y \in  \mcC_{-\eps}$, and
$w = \delta^*(x,y)$:
\begin{axioms}{Tw}
\item \label{axioms:Tw1} $\delta^*(y,x) = w^{-1}$,
\item \label{axioms:Tw2} if $z \in  \mcC_{-\eps}$ is such that
$\delta_{-\eps}(y,z)=s \in S$ and $l(ws) = l(w) - 1$,
then $\delta^*(x,z) = ws$, and
\item \label{axioms:Tw3} if $s \in S$, there exists $z \in  \mcC_{-\eps}$
such that $\delta_{-\eps}(y,z) = s$ and $\delta^*(x,z) = ws$.
\qedhere
\end{axioms}

Two chambers $c\in \mcC_\eps$ and $d\in \mcC_{-\eps}$ ($\eps \in \{ +,- \}$) are called \Defn{opposite}, in symbols $c \op d$, if $\delta^*(c,d)=1_W$.
Two residues are \Defn{opposite}, if they have the same type and contain opposite chambers.
A \Defn{twin apartment} of a twin building $\mcC$ is a pair $\Sigma=(\Sigma_+,\Sigma_-)$ such that $\Sigma_+$ is an apartment of $\mcC_+$, $\Sigma_-$ is an apartment of $\mcC_-$, and every chamber in $\Sigma_+\cup \Sigma_-$ is opposite to precisely one other chamber in $\Sigma_+\cup \Sigma_-$; this induces the \Defn{opposition involution} $\mathrm{op}_\Sigma : \Sigma \to \Sigma$ which maps a chamber of $\Sigma$ to its opposite. 
A \Defn{twin root} of a twin apartment $\Sigma$ is a pair $\alpha = (\alpha_+,\alpha_-)$ of roots in $\Sigma$ such that $\mathrm{op}_\Sigma(\alpha_+) = -\alpha_-$ and $\mathrm{op}_\Sigma(\alpha_-) = -\alpha_+$; cf.\ \cite[Definition~5.190]{Abramenko/Brown:2008}. 
Given a twin root $\alpha$ and a panel $P$ of $\mcC_+$ or $\mcC_-$ that meets $\alpha$, the panel $P$ is called a \Defn{boundary panel} of $\alpha$, if $P \cap \alpha$ consists of exactly one chamber; otherwise, $P$ is called an \Defn{interior panel} of $\alpha$. If $P$ is a boundary panel of $\alpha$, define $\mcC(P,\alpha) := P \backslash \{ c \}$, where $c$ is the chamber in $P \cap \alpha$.

\begin{remark} \label{rmk:spherical-flips-1}
By \cite[Proposition 1]{Tits:1992} (also \cite[Example 5.136]{Abramenko/Brown:2008}) there is a canonical one-to-one correspondence between spherical buildings of type $(W,S)$ and spherical twin buildings of type $(W,S)$: For a spherical building $(\mcC,\delta)$, the tuple $\mcC_{\mathrm{twin}} := ((\mcC,\delta),(\mcC_-,\delta_-),\delta^*)$ with $\mcC_- := \{ c_- \mid c \in \mcC \}$ and functions $\delta_- : \mathcal{C}_- \times \mcC_- \to W : (c_-,d_-) \mapsto w_S\delta(c,d)w_S$ and $$\delta^* : (\mcC \times \mcC_-) \cup (\mcC_- \times \mcC) \to W : \left\{ \begin{matrix} (c,d_-) \mapsto \delta(c,d)w_S \\ (c_-,d) \mapsto w_S\delta(c,d) \end{matrix} \right.$$
is a twin building, and any twin building with $(\mcC,\delta)$ as a positive half is isomorphic to this twin building.
Two chambers $c,d \in \mcC$ are opposite (i.e., $\delta(c,d)=w_S$) if and only if $c$ and $d_-$ are opposite (i.e., $\delta^*(c,d_-)=\delta(c,d)w_S = 1_W$) if and only if $c_-$ and $d$ are opposite (i.e., $\delta^*(c_-,d)=w_S\delta(c,d) = 1_W$).
\end{remark}

By \cite[Lemma 5.149]{Abramenko/Brown:2008},
if $R$ is a residue in $\mcC_\eps$ of spherical type, and $d$ is a chamber in $\mcC_{-\eps}$, then there is a unique chamber $c'\in R$ such that $\delta^*(c',d)$ is of maximal length in $\delta^*(R,d)$. This chamber satisfies $\delta^*(c,d) = \delta_\eps(c,c')\delta^*(c',d)$
for all $c\in R$. This chamber $c'$ is called the \Defn{projection} of $d$ onto $R$, in symbols $\proj_R(d)$.

A pair $(M_+,M_-)$ of non-empty subsets $M_+\subseteq \mcC_+$ and $M_-\subseteq \mcC_-$ is called \Defn{convex}, if $\proj_P c\in M_+\cup M_-$ for any $c\in M_+\cup M_-$ and any panel $P\subseteq \mcC_+\cup\mcC_-$ that meets $M_+\cup M_-$.
Two spherical residues $R$ and $Q$ are called \Defn{parallel} if $\proj_R(Q)=R$ and $\proj_Q(R)=Q$.

\subsubsection*{Group actions on buildings}

Following \cite[Definition~6.67]{Abramenko/Brown:2008} a group $G$ \Defn{acts} on a twin building $\mcC= ((\mcC_+,\delta_+),(\mcC_-,\delta_-),\delta^*)$, if it acts simultaneously on the two sets $\mcC_+$ and $\mcC_-$ and preserves the distances $\delta_+$, $\delta_-$ and the codistance $\delta^*$. It acts \Defn{strongly transitively}, if it is transitive on the set $\{ (c_+,c_-) \in \mcC_+ \times \mcC_- \mid \delta^*(c_+,c_-) = 1_W \}$ of pairs of opposite chambers of $\mcC$; see \cite[Lemma~6.70]{Abramenko/Brown:2008} for a collection of characterizations of strong transitivity. By \cite[Corollary~6.79]{Abramenko/Brown:2008}, a group acting strongly transitively on a thick twin building admits a twin $BN$-pair.

For any twin root $\alpha$ of a twin building $\mcC$, the \Defn{root group} $U_\alpha$ is defined to be the set of type-preserving automorphisms $g$ of $\mcC$ such that $g$ fixes $\alpha$ and every interior panel of $\alpha$ pointwise. The twin building $\mcC$ is called \Defn{Moufang}, if for each twin root $\alpha$ and each boundary panel $P$, the action of the root group $U_\alpha$ on the set $\mcC(P,\alpha) = P \backslash \{ c \}$ is transitive, where $c$ is the chamber in $P \cap \alpha$; it is called \Defn{strictly Moufang}, if these actions are simply transitive, see \cite[Section~8.3]{Abramenko/Brown:2008}. Any Moufang twin building that does not admit a root group $U_\alpha$, $\alpha \in \Phi$, which commutes with each root group $U_\beta$, $\beta \in \Phi \backslash \{ \alpha, -\alpha \}$, is strictly Moufang by \cite[Proposition 7.79]{Abramenko/Brown:2008}.

\medskip
Conversely, a group $G$ with a twin $BN$-pair yields two buildings $(G/B_+,\delta_+)$ and $(G/B_-,\delta_-)$ with distance functions $\delta_\eps$, $\eps \in \{ +,- \}$, defined as $\delta_\eps : G/B_\eps \times G/B_\eps \to W$ via $\delta_\eps(gB_\eps,hB_\eps) = w$ if and only if $B_\eps h^{-1}gB_\eps = B_\eps wB_\eps$ using the Bruhat decomposition; cf.\ Remark \ref{Titssystem}. Furthermore, using the Birkhoff decomposition one can define the codistance function $\delta^* : (G/B_- \times G/B_+) \cup (G/B_+ \times G/B_-) \to W$ via $\delta^*(gB_-,hB_+) = w$ if and only if $B_+h^{-1}gB_- = B_+wB_-$ and $\delta^*(hB_+,gB_-) := (\delta^*(gB_-,hB_+))^{-1}$. The tuple $((G/B_+,\delta_+),(G/B_-,\delta_-),\delta^*)$ then is a twin building, the \Defn{twin building associated to $G$}\index{associated twin building}; see \cite[Theorem~6.87]{Abramenko/Brown:2008}.

\medskip
Hence, to every RGD system, a twin building is associated in a natural way (recall \cite[Proposition 4]{Tits:1992}, \cite[Theorem 8.80]{Abramenko/Brown:2008}).
An RGD system $(G,\Uals,T)$ is called \Defn{faithful}, if $G$ operates faithfully on the associated building. It is called \Defn{centered}, if $G$ is generated by its root groups, and \Defn{reduced}, if it is both centered and faithful.
For any RGD system $(G,\Uals,T)$, denote by $G^\circ$ the quotient of the subgroup $\gen{U_\alpha\mid \alpha\in\Phi}$ by its center, and by $U^\circ_\alpha$ the canonical image of $U_\alpha$ in $G^\circ$. Unless there exists $\alpha \in \Phi$ orthogonal to all other roots, the canonical homomorphisms $U_\alpha\to U^\circ_\alpha$ are isomorphisms. Then $(G^\circ,\{U^\circ_\alpha\}_{\alpha\in\Phi})$ is a reduced RGD system with the same associated twin building as $(G,\Uals,T)$, which is called its \Defn{reduction}.

\medskip
 A \Defn{Moufang set} is a set $X$ containing at least two elements together with a collection of \Defn{root groups} $(U_x)_{x \in X}$ such that each $U_x$ is a subgroup of $\Sym(X)$ fixing $x$ and acting sharply transitively on $X \setminus \{ x \}$ and such that each $U_x$
        permutes the set $\{ U_y \mid y \in X \backslash \{ x \}\}$ by conjugation.
        The group $\langle U_x \mid x \in X \rangle$ generated by the root groups $U_x$ is called the
        \Defn{little projective group} of the Moufang set.
An \Defn{automorphism} of a Moufang set $\mouf = (X,(U_x)_{x \in X})$ is a permutation of the set $X$ such that the induced inner automorphism of the group $\mathrm{Sym}(X)$ permutes the set $\{ U_x \mid x \in X \}$. 

\begin{remark}
Moufang sets are closely related to rank one groups; see \cite{Medts/Segev:2009}, \cite{Timmesfeld:2001}, \cite[Section~2.5]{Caprace/Remy:2008-LN}.
\end{remark}

\begin{example}
Consider the $\mathbb{F}$-locally split RGD system $$\left(G,\{ U_\alpha, U_{-\alpha} \}, \left\{ \begin{pmatrix} \lambda_1 & 0 \\ 0 & \lambda_2 \end{pmatrix} \mid \lambda_1, \lambda_2 \in \mathbb{F}^* \right\} \right)$$ of the matrix group $G = \mathrm{GL}_2(\mathbb{F})$ described in Example \ref{An}. The building of $G$ arising from this RGD system is isomorphic to the projective line $X := G/B_+$; see Examples \ref{An2}, \ref{An3}, \ref{An35}. For each $x = gB_+ \in G/B_+$ define $U_x := R_u(gB_+g^{-1}) = gU_+g^{-1}$. Then $\mouf = (X,(U_x)_{x \in X})$ is a Moufang set. Its little projective group is $\mathrm{SL}_2(\mathbb{F})$.
\end{example}

\section{Flips and quasi-flips} \label{sec:flip-intro} \label{sec:flip-correspondence}

Let $G$ be a group that acts strongly transitively on a thick twin building and let $(B_+,B_-,N)$ be the twin $BN$-pair of $G$ resulting from that action; cf.\ \cite[Corollary~6.79]{Abramenko/Brown:2008}. Inspired by \cite{Bennett/Gramlich/Hoffman/Shpectorov:2003}, in this article article we study involutive automorphisms of $G$ that map $B_+$ onto $G$-conjugates of $B_-$.

\begin{prop} \label{prop:BN-quasi-flip-Weyl-auto} \label{prop:building-is-bn-flip}
  Let $G$ be a group with a saturated twin $BN$-pair $(B_+,B_-,N,S)$ of type $(W,S)$, let $T:=B_+\cap B_-$, and 
  let $\theta$ be an automorphism of $G$ satisfying $\theta^2 = \id$ such that there exists $g\in G$ with $\theta(B_+) = gB_-g^{-1}$. 
If the set of chambers fixed by $T$ of the twin building associated to $G$ equals the twin apartment containing $B_+$ and $B_-$, then the following hold:
  \begin{enumerate}
    \item \label{prop:BN-quasi-flip-Weyl-auto1} There exists $x\in G$ such that $\theta(B_\eps) = xB_{-\eps} x^{-1}$ and $\theta(x)x\in T$, where $\eps \in \{+,-\}$.
    \item \label{prop:BN-quasi-flip-Weyl-auto2} $\theta$ induces the automorphism $\theta_x : nT\mapsto x^{-1}\theta(n)xT$ of the Coxeter system $(NT/T,S) \cong (W,S)$ of order at most $2$.
\end{enumerate}
Let $\mcC$ be the twin building $((G/B_+,\delta_+),(G/B_-,\delta_-),\delta^*)$ arising from the twin $BN$-pair of $G$.
\begin{enumerate}
\setcounter{enumi}{2}
\item \label{prop:BN-quasi-flip-Weyl-auto3} The map $\wtheta : \mcC \to \mcC : gB_{\eps} \mapsto \theta(g)xB_{-\eps}$ preserves adjacency and opposition. It furthermore satisfies
\begin{enumerate}
\item \label{prop:BN-quasi-flip-Weyl-auto3a} $\wtheta^2 = \mathrm{id}$ and
\item \label{prop:BN-quasi-flip-Weyl-auto3b} $\wtheta(gc)=\theta(g)\wtheta(c)$ for any $g\in G$, $c \in \mcC$.
\end{enumerate}
  \end{enumerate}
\end{prop}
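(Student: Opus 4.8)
The plan is to construct the element $x$ first, and then to obtain assertions (2) and (3) from it by direct computation. For (1), note that a minimal parabolic subgroup is self-normalising, so $\theta(B_+) = gB_-g^{-1}$ holds exactly for $g$ running through the coset $gB_-$; applying $\theta$ and using $\theta^2=\id$ turns this into $\theta(B_-)=\theta(g)^{-1}B_+\theta(g)$, which holds exactly for the coset $\theta(g)^{-1}B_+$. Hence an $x$ with $\theta(B_\eps)=xB_{-\eps}x^{-1}$ for \emph{both} signs exists if and only if $gB_-\cap\theta(g)^{-1}B_+\neq\emptyset$, i.e.\ if and only if $\theta(g)g\in B_+B_-$. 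Moreover, once such an $x$ is fixed, applying $\theta$ to each of the two relations $\theta(B_\eps)=xB_{-\eps}x^{-1}$ shows that $\theta(x)x$ normalises $B_+$ \emph{and} $B_-$, hence $\theta(x)x\in B_+\cap B_-=T$, giving the second half of (1) for free. So (1) reduces to the single claim that the chambers $\theta(B_+)\in\mcC_-$ and $\theta(B_-)\in\mcC_+$ are opposite, equivalently $\theta(g)g\in B_+B_-$.

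I expect this opposition claim to be the main obstacle. The idea is that $\theta$, permuting the parabolic subgroups of $G$ and interchanging the two types of minimal parabolics, induces a type-reversing automorphism $\bar\theta$ of the twin building $\mcC$ (reconstructed from the poset of parabolics), which is an isometry for $\delta_+$ and $\delta_-$ and intertwines the $G$-action on $\mcC$ with the $\theta$-twisted one. Since opposition of chambers is a $G$-invariant relation and every opposite pair is a $G$-translate of $(B_+,B_-)$, it suffices to check that $\bar\theta$ carries this one pair to an opposite pair. Here the hypothesis $\Fix_\mcC(T)=\Sigma$ enters: $\bar\theta$ carries $\Sigma=\Fix_\mcC(T)$ onto $\Fix_\mcC(\theta(T))$ where $\theta(T)=\theta(B_+)\cap\theta(B_-)$, and the point to nail down is that, because a twin apartment is pinned down by the group-theoretic property of being the fixed locus of the intersection of a suitable pair of minimal parabolics, $\Fix_\mcC(\theta(T))$ is again a twin apartment; then a chamber of a twin apartment together with its unique opposite in that apartment is transported by $\bar\theta$ to such a pair, so $\theta(B_+)$ and $\theta(B_-)$ are opposite. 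Making this transport-of-twin-apartments argument precise from the fixed-point hypothesis is the delicate part; everything after it is formal.

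Granting (1), assertion (2) is quick. First $\theta(T)=\theta(B_+)\cap\theta(B_-)=xB_-x^{-1}\cap xB_+x^{-1}=xTx^{-1}$, so $x^{-1}\theta(T)x=T$. Next, applying $\theta$ to the given twin $BN$-pair, reordering the two Borels, and conjugating by $x^{-1}$ produces a twin $BN$-pair $(B_+,B_-,x^{-1}\theta(N)x)$, and it is saturated because $B_+\cap x^{-1}\theta(N)x=x^{-1}(\theta(B_-)\cap\theta(N))x=x^{-1}\theta(T)x=T=B_+\cap B_-$; by the uniqueness of $N$ in a saturated twin $BN$-pair this forces $x^{-1}\theta(N)x=N$. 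Hence $\theta_x\colon nT\mapsto x^{-1}\theta(n)xT$ is a well-defined automorphism of $W=N/T$, it permutes $S$ because $\theta$ permutes the rank-one parabolics, and $\theta_x^2(nT)=x^{-1}\theta(x)^{-1}n\theta(x)xT=(\theta(x)x)^{-1}n(\theta(x)x)T=nT$ since $\theta(x)x\in T$ and $T\trianglelefteq N$, so $\theta_x$ has order at most $2$.

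For (3), the formula $\wtheta(gB_\eps)=\theta(g)xB_{-\eps}$ is well defined because for $b\in B_+$ one has $\theta(b)\in\theta(B_+)=xB_-x^{-1}$, hence $\theta(gb)xB_-=\theta(g)\theta(b)xB_-=\theta(g)xB_-$ (and symmetrically for $\mcC_-$); note $\wtheta$ interchanges $\mcC_+$ and $\mcC_-$. Property (3b) is immediate, and (3a) follows from $\wtheta^2(gB_\eps)=\theta(\theta(g)x)xB_\eps=g\,\theta(x)x\,B_\eps=gB_\eps$ using $\theta(x)x\in T\subseteq B_\eps$. Preservation of adjacency: from $\delta_+(gB_+,g'B_+)=s$ one gets $B_+(g^{-1}g')B_+=B_+n_sB_+$, and applying $\theta$ and conjugating by $x^{-1}$ yields $B_-(x^{-1}\theta(g^{-1}g')x)B_-=B_-(x^{-1}\theta(n_s)x)B_-$, where by (2) the element $x^{-1}\theta(n_s)x$ represents $\theta_x(s)\in S$; thus $s$-adjacent chambers are sent to $\theta_x(s)$-adjacent chambers. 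Preservation of opposition: $gB_+$ and $g'B_-$ are opposite iff $g^{-1}g'\in B_+B_-$, while $\wtheta(gB_+)=\theta(g)xB_-$ and $\wtheta(g'B_-)=\theta(g')xB_+$ are opposite iff $x^{-1}\theta(g'^{-1}g)x\in B_+B_-$, i.e.\ iff $\theta(g'^{-1}g)\in xB_+B_-x^{-1}=\theta(B_-)\theta(B_+)=\theta(B_-B_+)$, i.e.\ iff $g'^{-1}g\in B_-B_+$, which by taking inverses and using $(B_+B_-)^{-1}=B_-B_+$ is equivalent to $g^{-1}g'\in B_+B_-$. Hence $\wtheta$ preserves opposition, which finishes the argument.
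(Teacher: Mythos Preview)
Your reduction of (1) to the single claim $\theta(g)g\in B_+B_-$ is correct and clean, and your observation that $\theta(x)x\in T$ then follows by applying $\theta$ to the two conjugacy relations is slicker than the paper's endgame. The problem is the argument you sketch for the opposition claim itself: it is circular. You want to transport $\Sigma=\Fix_\mcC(T)$ via your map $\bar\theta$ to $\Fix_\mcC(\theta(T))$ and assert that the latter is again a twin apartment. But the fixed-locus hypothesis only says this for $T$ and hence for its $G$-conjugates; for the intersection of an \emph{arbitrary} positive Borel with an arbitrary negative Borel the fixed set is strictly larger than a twin apartment unless the two are opposite. Thus knowing that $\theta(T)$ is a $G$-conjugate of $T$ is exactly the statement that $(\theta(B_-),\theta(B_+))$ is $G$-conjugate to $(B_+,B_-)$, which is the opposition claim you are trying to prove. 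Your fallback, that $\bar\theta$ is an adjacency-preserving involution swapping the halves, does not save this either: in the non-spherical case the codistance is genuine extra structure not determined by the two chamber systems separately, so such a map need not preserve opposition without additional input.

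The paper supplies that input by a direct computation. Using the Birkhoff decomposition $\theta(g)g=b_+nb_-$ and setting $x:=b_+^{-1}\theta(g)$, one finds $x\theta(T)x^{-1}=nB_-n^{-1}\cap B_+\geq T$; iterating gives $T\leq (x\theta(x))\,T\,(x\theta(x))^{-1}$. Now the fixed-point hypothesis bites: the right-hand side is a $G$-conjugate of $T$ and hence has fixed set a twin apartment, which by the inclusion must lie inside $\Fix_\mcC(T)=\Sigma$, forcing $x\theta(x)\in\Stab_G(\Sigma)=N=N_G(T)$. A second application then places $x\theta(x)$ in $B_-$, hence in $B_-\cap N_G(T)=T$. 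This is precisely the ``delicate part'' you flagged but did not carry out; note that the hypothesis is used quantitatively (to squeeze a containment of fixed sets into an equality), not just to characterise one twin apartment abstractly.

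Your arguments for (2) and (3) are correct and essentially parallel to the paper's. Your use of the uniqueness of $N$ in a saturated twin $BN$-pair to conclude $x^{-1}\theta(N)x=N$ is a pleasant alternative to the paper's direct proof that $N=N_G(T)$.
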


\begin{remark}
We stress that the map $\tilde \theta$ in item (\ref{prop:BN-quasi-flip-Weyl-auto3}) of the preceding proposition not necessarily preserves the types of $\mcC$. In fact, it preserves types if and only if the automorphism $\theta_x$ of the group $NT/T$ in item (\ref{prop:BN-quasi-flip-Weyl-auto2}) is the identity automorphism.
\end{remark}

\begin{proof}
The normalizer $N_G(T)$ acts on the set of chambers fixed by $T$ of the twin building associated to $G$. Since by assumption this set equals the twin apartment containing $B_+$ and $B_-$ and since $N$ equals the full stabilizer of this twin apartment (as $(B_+,B_-,N,S)$ is saturated, cf.\ \cite[Definition~6.84]{Abramenko/Brown:2008}), the equality $N = N_G(T)$ holds.

(\ref{prop:BN-quasi-flip-Weyl-auto1})
By the Birkhoff decomposition, there exist $b_+\in B_+$, $b_-\in B_-$, $n\in N$ such that $\theta(g)g = b_+ n b_-$. Then 
\begin{eqnarray*}
\theta(gTg^{-1}) & = & \theta(g(B_+ \cap B_-)g^{-1}) \\
& = & \theta(g) \theta(B_+) \theta(g^{-1}) \cap \theta(gB_-g^{-1}) \\
& = & \theta(g)g B_- g^{-1}\theta(g)^{-1}  \cap B_+ \\
& = & (b_+ n b_-) B_- (b_+ n b_-)^{-1}  \cap B_+. 
\end{eqnarray*}
Hence, for $x:=b_+^{-1} \theta(g)$, we have $$x\theta(T)x^{-1} = b_+^{-1} \theta(gTg^{-1}) b_+ =  n B_- n^{-1}  \cap B_+ \geq T,$$ where the last containment holds because of $n \in N$. Therefore $$T \leq x\theta(T)x^{-1} \leq x\theta(x\theta(T)x^{-1})x^{-1} = x\theta(x)T(x\theta(x))^{-1}.$$ 
Accordingly, as $T$ fixes a unique twin apartment, $T=x\theta(x)T(x\theta(x))^{-1}$, i.e., $x\theta(x) \in N_G(T)$. We note $B_+ = x\theta(g)^{-1}b_+B_+b_+^{-1}\theta(g)x^{-1} = x\theta(B_-)x^{-1}$ and, thus, $$B_- \cap B_+ = T = x\theta(T)x^{-1} = x\theta(B_+)x^{-1} \cap x\theta(B_-)x^{-1} = x\theta(x) B_- (x\theta(x))^{-1} \cap  B_+.$$ 
Since $B_-$ is the unique chamber opposite $B_+$ in the twin apartment fixed by $T$, this means $x\theta(x)\in N_G(B_-)=B_-$ and, in particular, $\theta(x)B_- = x^{-1}B_-$. Therefore $x\theta(x) \in B_- \cap N_G(T) = T$ and $\theta(B_+)=\theta(x)B_-\theta(x)^{-1} = x^{-1}B_- x$.
 
(\ref{prop:BN-quasi-flip-Weyl-auto2})
 For each $x \in X:=\{x\in G \mid \theta(B_+) = x B_- x^{-1} \text{ and } \theta(B_-) = x B_+ x^{-1} \}$ the map $\theta_x:g\mapsto x^{-1} \theta(g) x$ preserves $T=B_+\cap B_-$. Therefore $\theta_x : W \to W : nT \mapsto \theta_x(nT)$ is an automorphism of $W = N/T$. It does not depend on the choice of $x$, because for $x'\in X$ we have $\theta_x(g)=x x'^{-1}\theta_{x'}(g) x' x^{-1}$ with $x x'^{-1} \in N_G(B_+) \cap N_G(B_-) = B_+ \cap B_- = T$.
 
The automorphism $\theta_x$ normalizes $S$, as for each $s = n_sT \in S \subseteq N/T$ the set $P_s := B_+ \cup B_+n_sTB_+$ is mapped by $\theta_x$ to $B_- \cup B_- \theta_x(n_s) B_-$. Therefore positive rank one parabolics are mapped to negative rank one parabolics, and $\theta_x$ acts on $S$. 

(\ref{prop:BN-quasi-flip-Weyl-auto3}) The map $\wtheta$ has order at most two, because
$\wtheta(\wtheta(gB_\eps))=\wtheta(\theta(g)xB_{-\eps})=\theta(\theta(g)x)xB_{\eps}=g\theta(x)xB_\eps=gB_\eps$, whence (\ref{prop:BN-quasi-flip-Weyl-auto3a}) holds. (\ref{prop:BN-quasi-flip-Weyl-auto3b}) is obvious.
By item (\ref{prop:BN-quasi-flip-Weyl-auto2}) the map $\theta_x : nT \mapsto \theta_x(nT)=\theta_x(n)T$ is an automorphism of the Weyl group $W=N/T$. We compute
 \begin{align*}
    \delta_{\eps} (g B_{\eps}, h B_{\eps}) = w
 \Longleftrightarrow {} &
	B_\eps g^{-1}hB_\eps = B_\eps w B_\eps \\
 \Longleftrightarrow {} &
      \theta(B_\eps) \theta(g^{-1}h) \theta(B_\eps)
    = \theta(B_\eps) \theta(w) \theta(B_\eps) \\
 \Longleftrightarrow {} &
      B_{-\eps} x^{-1} \theta(g^{-1})\theta(h) x B_{-\eps}
    = B_{-\eps} x^{-1} \theta(w) x B_{-\eps}
    = B_{-\eps} \theta_x(w) B_{-\eps} \\
 \Longleftrightarrow {} &
    \delta_{-\eps} (\theta(g)x B_{-\eps}, \theta(h)x B_{-\eps}) = \theta_x(w)
   .
 \end{align*}

Similarly we find that $\delta^* (g B_{\eps}, h B_{-\eps}) = w$ is equivalent to $\delta^* (\theta(g)x B_{-\eps}, \theta(h)x B_{\eps}) = \theta_x(w)$.
\end{proof}

\begin{remark}
If the group $G$ in Proposition \ref{prop:BN-quasi-flip-Weyl-auto} is endowed with a locally split RGD system over fields $(\mathbb{K}_\alpha)_{\alpha \in \Phi}$ satisfying $|\mathbb{K}_\alpha| \geq 4$ for each $\alpha \in \Phi$, then by \cite[Lemma~4.8]{Caprace:2009} the set of chambers fixed by the torus $T$ equals the twin apartment containing $B_+$ and $B_-$.
\end{remark}

\begin{remark}\label{thm:bn-is-building-flip}
Let $\mathcal{C} = (\mathcal{C}_+,\mathcal{C}_-,\delta^*)$ be a twin building with a strongly transitive group $G$ of automorphisms and let $\Theta : \mcC \to \mcC$ be a permutation that preserves adjacency and opposition in $\mcC$ and satisfies $\Theta^2 = \mathrm{id}$ and $\Theta(\mcC_+) = \mcC_-$.
The question whether there is an automorphism $\theta$ of $G$ with $\theta^2 = \id$ such that $\theta(B_+)$ is a $G$-conjugate of $B_-$ and such that the automorphism $\wtheta$ from Proposition \ref{prop:BN-quasi-flip-Weyl-auto}(\ref{prop:BN-quasi-flip-Weyl-auto3}) equals $\Theta$, can be answered affirmatively, if $G$ is a characteristic subgroup of $\Aut(\mcC)$; in this case $\Theta$ induces an automorphism of $G$. 
This is for instance the case, if $G$ is the group generated by the root groups $U_\alpha$ of a strictly Moufang twin building.
\end{remark}

In view of the preceding proposition and remark it is natural to  study the involutions of $G$ and of $\mcC$ discussed above simultaneously. In particular, we will not always formally distinguish between $\theta$, $\tilde \theta$ and $\theta_x$, but instead often denote all of these maps by $\theta$.

\begin{defn} \label{def:BN-quasi-flip}
Let $\mcC=((\mcC_+,\delta_+),(\mcC_-,\delta_-),\delta^*)$ be a twin building of type $(W,S)$ and, if $\mcC$ is Moufang, let $G$ be a strongly transitive group of automorphisms of $\mcC$.
Let $\theta$ be either 
\begin{itemize}
\item an automorphism of $G$ or
\item a permutation of $\mcC_+ \cup \mcC_-$.
\end{itemize}
The map $\theta$ is called a \Defn{quasi-flip}, if
\begin{itemize}
\item $\theta^2 = \id$ and $\theta(B_+)$ is a $G$-conjugate of $B_-$, resp.\
\item $\theta^2 = \id$ and $\theta(\mcC_+) = \mcC_-$ and $\theta$ preserves adjacency and opposition of $\mcC$.
\end{itemize}
If $\theta$ flips the distances and preserves the codistance, i.e., 
\begin{itemize}
\item if the induced automorphism $nT\mapsto x^{-1}\theta(n)xT$ from Proposition \ref{prop:BN-quasi-flip-Weyl-auto}(\ref{prop:BN-quasi-flip-Weyl-auto2}) of the Coxeter system $(NT/T,S) \cong (W,S)$ of $G$ is trivial, resp.\
\item if for $\eps \in \{+,-\}$ and for all $x, y \in \mcC_\eps$, $z \in \mcC_{-\eps}$ we have $\delta_\eps(x,y)= \delta_{-\eps}(\theta(x), \theta(y))$ and $\delta^* (x,z) = \delta^* (\theta(x), \theta(z))$,
\end{itemize}
then $\theta$ is called a \Defn{flip}.
\end{defn}

\begin{remark} \label{rmk:spherical-flips}
Recall from Remark \ref{rmk:spherical-flips-1} that there is a canonical one-to-one correspondence between spherical buildings and spherical twin buildings of type $(W,S)$: For a spherical building $(\mcC,\delta)$, the tuple $\mcC_{\mathrm{twin}} := ((\mcC,\delta),(\mcC_-,\delta_-),\delta^*)$ with $\mcC_- := \{ c_- \mid c \in \mcC \}$ and functions $\delta_- : \mathcal{C}_- \times \mcC_- \to W : (c_-,d_-) \mapsto w_S\delta(c,d)w_S$ and $\delta^* : (\mcC \times \mcC_-) \cup (\mcC_- \times \mcC) \to W : (c,d_-) \mapsto \delta(c,d)w_S, (c_-,d) \mapsto w_S\delta(c,d)$
is a twin building, and any twin building with $(\mcC,\delta)$ as a positive half is isomorphic to this twin building.

Two chambers $c,d \in \mcC$ are opposite (i.e., $\delta(c,d)=w_S$) if and only if $c$ and $d_-$ are opposite (i.e., $\delta^*(c,d_-)=\delta(c,d)w_S = 1_W$) if and only if $c_-$ and $d$ are opposite (i.e., $\delta^*(c_-,d)=w_S\delta(c,d) = 1_W$). Therefore, by \cite[Exercise~5.164]{Abramenko/Brown:2008}, the involutive (almost) isometries of spherical buildings correspond to the (quasi-)flips of spherical twin buildings. Using the notation introduced above, this correspondence is given by $(\theta_{\mathrm{isom}} : \mathcal{C} \to \mathcal{C}) \longleftrightarrow (\theta_{\mathrm{flip}} : \mathcal{C}_{\mathrm{twin}} \to \mathcal{C}_{\mathrm{twin}})$ via $\theta_{\mathrm{isom}}(c) = d \Longleftrightarrow \theta_{\mathrm{flip}}(c) = d_- \Longleftrightarrow \theta_{\mathrm{flip}}(c_-) = d$. Note that $\theta_{\mathrm{isom}} = \mathrm{proj}_{\mathcal{C}} \circ {\theta_{\mathrm{flip}}}_{|\mcC}$, where $\mathrm{proj}_{\mathcal{C}}$ denotes the projection from $\mathcal{C}_-$ onto $\mathcal{C}$ within the twin building $\mcC_{\mathrm{twin}}$.
\end{remark}

\begin{example} \label{exmpl:proj-space-form-flip}
Let $V$ be an $n$-dimensional vector space over a field $\F$ with an automorphism $\sigma$ satisfying $\sigma^2 = \mathrm{id}$. Let $(\cdot,\cdot) : V \times V \to \F$ be a non-degenerate $\sigma$-hermitian sesquilinear form. Then the map $U \mapsto \bigcap_{u \in U} \mathrm{ker}(u,\cdot)$ induces an almost isometry on the spherical building described in Example \ref{An3}. In view of Remark \ref{rmk:spherical-flips} this yields a quasi-flip (in fact, a flip) of the corresponding spherical twin building of type $A_{n-1}$.
For $\F = \F_{q^2}$ and $\sigma \neq \mathrm{id}$ this flip of the spherical twin building of type $A_{n-1}$ induces the flip given by transpose-inverse times $\sigma$ of the matrix group $\mathrm{SL}_n(\mathbb{F}_{q^2})$ with respect to a $(\cdot,\cdot)$-orthonormal basis of $V$, as studied in \cite{Bennett/Shpectorov:2004} in order to provide an alternative proof of Phan's local recognition theorem \cite{Phan:1977} of the group $\mathrm{SU}_{n}(\mathbb{F}_{q^2})$.
\end{example}

\begin{remark} \label{abstractpossible}
\begin{enumerate}
\item\label{abstractpossible1} Let $G$ be a connected isotropic reductive linear algebraic group defined over an infinite field $\F$ and let $G(\F)$ denote the subgroup of $\F$-rational points of $G$. Then, by \cite[Proposition~7.2]{Borel/Tits:1973}, any abstract automorphism of $G(\F)$ maps parabolic $\F$-subgroups to parabolic $\F$-subgroups. In particular, any involutive automorphism of $G(\F)$ is a quasi-flip.
\item By \cite[p.\ 132, Corollary]{Steinberg:1968} plus Sylow's theorem any involutive automorphism of a finite group of Lie type is a quasi-flip.
\item Let $\mathcal{D} = (I,A,\Lambda,(c_i)_{i\in I})$ be a non-spherical Kac--Moody root datum, let $\mathcal{F} = (\mathcal{G},(\phi_i)_{i \in I},\eta)$ be the basis of a Tits functor $\mathcal{G}$ of type $\mathcal{D}$, let $\mathbb{F}$ be a field distinct from $\mathbb{F}_2$, $\mathbb{F}_3$, and let $G := \mathcal{G}(\mathbb{F})$ be the corresponding Kac--Moody group. By \cite[Theorem~4.1]{Caprace:2009} any abstract automorphism of $G$ either preserves or interchanges the two conjugacy classes of Borel subgroups of $G$. In particular, any involutive automorphism of $G$ that does not preserve a conjugacy class of Borel subgroups is a quasi-flip.
\end{enumerate}
\end{remark}

\begin{example} \label{Iwahori}
Consider the group $\mathrm{GL}_n(\mathbb{F}[t,t^{-1}])$ discussed in Examples \ref{Antilde} and \ref{Antilde2}. Then the automorphism $\theta : \mathrm{GL}_n(\mathbb{F}[t,t^{-1}]) \to \mathrm{GL}_n(\mathbb{F}[t,t^{-1}])$ that centralizes $\mathrm{GL}_n(\mathbb{F})$ and interchanges $t$ and $t^{-1}$ is a quasi-flip.
\end{example}

\begin{remark} \label{localisom}
Let $\mcC=((\mcC_+,\delta_+),(\mcC_-,\delta_-),\delta^*)$ be a twin building, let $\theta$ be a quasi-flip of $\mcC$, and let $R$ be a spherical residue of $\mcC_\epsilon$, $\epsilon \in \{ +, - \}$. If $R$ and $\theta(R)$ are parallel, then the restriction $\theta_{|R \cup \theta(R)}$ is a quasi-flip of the spherical twin building defined on the pair $(R,\theta(R))$ by the canonical (co)restrictions of the distances and the codistance. The corresponding involutive almost isometry of $R$ as described in Remark \ref{rmk:spherical-flips} is given by the product $\mathrm{proj}_{R} \circ \theta_{|R}$. 
\end{remark}

It will turn out to be useful to know how far a quasi-flip of a twin building moves a chamber. This notion is made precise in the following definition:

\begin{defn} \label{theta-cod}
Let $\theta$ be a quasi-flip of a twin building $\mathcal{C}$.
For a chamber $c \in \mcC$, the value $\dt(c) := \delta^*(c,\theta(c)) \in W$ is called the \Defn{\thcod} of $c$. Moreover, $\lt(c):=l(\dt(c))$ is called the \Defn{numerical \thcod}.
If there exists a $c \in \mcC$ with $\dt(c) = 1_W \in W$, then the quasi-flip $\theta$ is called a \Defn{proper quasi-flip} or a \Defn{Phan involution}.

A quasi-flip $\theta$ of a group with a $BN$-pair is called \Defn{proper}, if the quasi-flip $\wtheta$ from Proposition \ref{prop:BN-quasi-flip-Weyl-auto}(\ref{prop:BN-quasi-flip-Weyl-auto3}) is proper.
\end{defn}

For proper quasi-flips the set of chambers opposite their image under $\theta$ has a rich and interesting geometric structure. In order to also capture non-proper quasi-flips, we propose the following definition.

\begin{defn}
Let $\theta$ be a quasi-flip of a Moufang twin building $\mathcal{C}$.
A residue $R$ of $\mcC$ is called a \Defn{Phan residue}, if $R$ is opposite $\theta(R)$, i.e., for each chamber of $R$ there exists an opposite chamber in $\theta(R)$, and vice versa. 
A Phan residue consisting of a single chamber is called a \Defn{Phan chamber}.
\end{defn}

Another important class of quasi-flips has first appeared in \cite{Devillers/Muehlherr:2007}:

\begin{defn}[Definition 6.2 of \cite{Devillers/Muehlherr:2007}] \label{non-strong}
Let $\theta$ be a building quasi-flip of a twin building $\mcC$. For any spherical residue $R$, define the set $\proj_R(\theta):=\{ c\in R \mid \proj_R(\theta(c))=c\}$,
where $\proj_R$ denotes the projection onto $R$.
If for all panels $P$ of $\mcC$ one has $\proj_P(\theta)\neq P$, one calls $\theta$ a \Defn{strong quasi-flip}.
\end{defn}

\begin{remark}
A prototypical example of a strong flip is given in \ref{exmpl:proj-space-form-flip}, in case $\sigma \neq \mathrm{id}$; see Proposition \ref{lem:semi-linear-strong} below.
\end{remark}

\section{$\theta$-twisted involutions and minimal Phan residues} \label{sec:steep-descent} \label{sec:strong-flips}

Let $\theta$ be a quasi-flip of a twin building $\mathcal{C}$, let $c \in \mcC$, and let $\dt(c) = \delta^*(c,\theta(c))= w \in W$. Then $w^{-1} =  \delta^*(\theta(c),c) = \theta(w)$.
Such Weyl group elements are called twisted involutions in \cite[Section 3]{Springer:1984} and \cite[Section 7]{Helminck/Wang:1993}; see also \cite{Hultman:2007}, \cite{Hultman:2008}.

\begin{defn} \label{itheta}
Let $(W,S)$ be a Coxeter system and let $\theta$ be an automorphism of $(W,S)$ of order at most two.
A \Defn{$\theta$-twisted involution} of $W$ is an element $w\in W$ satisfying $\theta(w)=w^{-1}$. The set of $\theta$-twisted involutions is denoted by $\Inv^\theta(W)$.
\end{defn}
Of course, $\Inv^{\Id}(W)=\Inv(W)$ is just the set of elements of $W$ that square to the identity. 

\begin{lemma} \label{lem:cox-inv3}
Let $w\in \Inv^\theta(W)$ be a $\theta$-twisted involution and let $s\in S$. Then $l(sw)=l(w\theta(s))$. Moreover, if $l(sw\theta(s))=l(w)$, then $sw\theta(s)=w$.
\end{lemma}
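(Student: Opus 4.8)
The plan is to prove the two assertions in sequence, both reducing to elementary Coxeter-group facts combined with the defining property $\theta(w)=w^{-1}$ of a $\theta$-twisted involution.

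For the first assertion $l(sw)=l(w\theta(s))$: since $\theta$ is a length-preserving automorphism of $(W,S)$ (it permutes $S$, hence preserves $l$), applying $\theta$ to $w\theta(s)$ gives $l(w\theta(s))=l(\theta(w\theta(s)))=l(\theta(w)s)=l(w^{-1}s)$. But $l(w^{-1}s)=l((w^{-1}s)^{-1})=l(sw)$ since inversion preserves length in a Coxeter group. Chaining these equalities yields $l(sw)=l(w\theta(s))$.

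For the second assertion, suppose $l(sw\theta(s))=l(w)$. First I would observe that in a Coxeter group, left-multiplication by $s\in S$ changes length by exactly $\pm1$, and similarly right-multiplication by $\theta(s)\in S$ changes length by exactly $\pm1$; hence $l(sw)=l(w)\pm1$ and $l(sw\theta(s))=l(sw)\pm1$. Combined with $l(sw\theta(s))=l(w)$, the two sign choices must be opposite: either $l(sw)=l(w)+1$ and $l(sw\theta(s))=l(sw)-1$, or $l(sw)=l(w)-1$ and $l(sw\theta(s))=l(sw)+1$. By the first assertion, $l(sw)=l(w\theta(s))$, so in the first case $l(w\theta(s))=l(w)+1$, i.e. $\theta(s)$ does not lie in a reduced word for $w$ on the right, and in the second $l(w\theta(s))=l(w)-1$. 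Now apply the exchange/deletion condition. In the case $l(sw)=l(w)+1$ (so $l(sw\theta(s))=l(sw)-1$): take a reduced expression for $sw$ beginning with $s$, say $sw=s s_1\cdots s_k$ reduced; since right-multiplication by $\theta(s)$ decreases length, the strong exchange condition provides an index $i$ such that $sw\theta(s)=ss_1\cdots\widehat{s_i}\cdots s_k$. If the deleted letter is the initial $s$, then $sw\theta(s)=s_1\cdots s_k = sw$ wait — more carefully: either the exchanged letter is the leading $s$, giving $sw\theta(s)=s_1\cdots s_k$, whence $w\theta(s)=s_1\cdots s_k\cdot$ hmm. The cleanest route is: $l(sw\theta(s))=l(w)$ together with $l(sw)=l(w)+1=l(w\theta(s))$ forces, by applying the exchange condition to $s\cdot(w\theta(s))$ with $l(s\cdot w\theta(s))<l(w\theta(s))$, that $s w\theta(s)$ is obtained from a reduced word of $w\theta(s)$ by deleting one letter; on the other hand $sw\theta(s)$ has the same length as $w$, and $w = (w\theta(s))\theta(s)$ is obtained by deleting the last letter — matching lengths, one shows $sw\theta(s)$ and $w$ represent the same element by uniqueness considerations. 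The symmetric case $l(sw)=l(w)-1$ is handled identically by left-right symmetry (or by passing to $w^{-1}=\theta(w)$ and using that $\theta$ is an automorphism).

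The main obstacle I anticipate is packaging the second part cleanly: one wants to avoid a messy case analysis with the exchange condition. The slick approach I would actually pursue is to set $v=sw\theta(s)$ and note $v$ is again a $\theta$-twisted involution ($\theta(v)=\theta(s)\theta(w)s=\theta(s)w^{-1}s=(sw\theta(s))^{-1}=v^{-1}$), so both $w$ and $v$ lie in $\Inv^\theta(W)$; then use that $v$ and $w$ differ by the ``elementary move'' $w\mapsto sw\theta(s)$, and the hypothesis $l(v)=l(w)$ means this move is length-preserving. By the standard theory of twisted involutions (the analogue of the fact that for ordinary involutions $sws=w$ when $l(sws)=l(w)$, proved via the observation that $sw$ and $ws^{-1}$ relate through the palindromic reduced-word structure), length-preservation forces $sw\theta(s)=w$; concretely, write $w=u\cdot v'\cdot \theta(u)^{-1}$ hmm. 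I will instead give the direct exchange-condition argument: $l(sw)=l(w)+1$ WLOG (else replace $w$ by $w^{-1}$), so $l(sw\theta(s))=l(sw)-1$, and the strong exchange condition applied to $sw$ and the generator $\theta(s)$ on the right yields $sw\theta(s)=t(sw)$ hmm — cleanest: it yields that $sw\theta(s)$ is a subword, and comparing with $l(sw\theta(s))=l(w)=l(sw)-1$ and $l(w\theta(s))=l(w)+1$ pins down that the deleted letter must be the leading $s$, giving $sw\theta(s)=w$. I expect this final pinning-down to be the one spot requiring a little care, but it is purely combinatorial.
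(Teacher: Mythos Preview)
Your first assertion is correct and identical to the paper's argument. Your second assertion, once you settle on the final approach (take a reduced expression of $sw$ beginning with $s$, apply the exchange condition for right-multiplication by $\theta(s)$, and use $l(w\theta(s))=l(sw)=l(w)+1$ from the first part to rule out deleting any letter except the leading $s$), is also correct and is essentially the paper's argument for the case $l(sw)>l(w)$; the paper just phrases the contradiction via the identity $w=\theta(s_h)\cdots\theta(s_1)$ rather than via the first part.

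One caution: your proposed WLOG reduction ``else replace $w$ by $w^{-1}$'' does not work as stated, since $l(sw^{-1})$ is in general unrelated to $l(sw)$, so passing to $w^{-1}=\theta(w)$ does not interchange the two cases. However, your alternative ``left-right symmetry'' is valid: in the case $l(sw)=l(w)-1$ one has $l(w\theta(s))=l(w)-1$ by the first part, so $w$ admits a reduced expression ending in $\theta(s)$; applying the exchange condition for \emph{left}-multiplication by $s$ and arguing exactly as before forces the deleted letter to be the trailing $\theta(s)$, giving $sw=w\theta(s)$ and hence $sw\theta(s)=w$. This is the mirror image of your Case~1 argument and matches the paper's treatment of its first case. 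So the plan is sound once you discard the false starts and the incorrect WLOG.
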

\begin{proof}
Since $\theta$ is an automorphism of $(W,S)$, any $w\in W$ satisfies $l(w)=l(\theta(w))$.
Hence $l(sw) = l(w^{-1} s^{-1}) = l(\theta(w) s) = l(w\theta(s))$.

The second statement is a consequence of \cite[Lemma 3.2]{Springer:1984}:
If $l(sw)<l(w)$, then one may write $w=s_1\cdots s_h$ with $s_i\in S$,
$l(w)=h$, and $s_1=s$. Since $w$ is a $\theta$-twisted involution, one obtains
$w=\theta(s_h)\cdots\theta(s_1)$. The exchange condition implies
$sw=\theta(s_h)\cdots\widehat{\theta(s_i)}\cdots\theta(s_1)$ for some $i$. If
$i>1$, then $l(sw\theta(s))<l(w)$, a contradiction to the hypothesis
$l(sw\theta(s))=l(w)$. Hence $i=1$ and $sw\theta(s)=w$. If $l(sw)>l(w)$, then,
for $w=s_1\cdots s_h$ with $s_i\in S$ and $l(w)=h$, one has
$sw=s\theta(s_h)\cdots\theta(s_1)$, since $w$ is a $\theta$-twisted
involution. By hypothesis, $l(sw\theta(s))=l(w)<l(sw)$, so that the exchange
condition implies that $sw\theta(s)$ equals $\theta(s_h)\cdots\theta(s_1)$ or
$s\theta(s_h)\cdots\widehat{\theta(s_i)}\cdots\theta(s_1)$ for some $i$. Since
$sw\theta(s)$ is a $\theta$-twisted involution, in the latter case one obtains $sw\theta(s) = s_1 \cdots \widehat{s_i} \cdots s_h\theta(s)$, whence $l(sw) < l(sw\theta(s))$, again a contradiction. Therefore $sw\theta(s) = \theta(s_h)\cdots\theta(s_1) = w$.
\end{proof}

\begin{prop}[Proposition 3.3(a) of \cite{Springer:1984}]  \label{prop:cox-twisted-inv}
Let $w\in \Inv^\theta(W)$ be a $\theta$-twisted involution. Then there exist a $\theta$-stable spherical subset $I$ of $S$ and $s_1,\ldots,s_h\in S$ such that
$w = s_1\cdots s_h w_I \theta(s_h)\cdots\theta(s_1)$
with $l(w)=l(w_I)+2h$, where $w_I$ denotes the longest word of the spherical Coxeter system $(\langle I \rangle, I)$.
\end{prop}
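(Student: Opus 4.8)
The plan is to argue by induction on $l(w)$. If $l(w)=0$ then $w=1$ and we take $I=\emptyset$, $h=0$. Assume $l(w)>0$ and set $D(w):=\{s\in S\mid l(sw)<l(w)\}$, which is non-empty. For $s\in D(w)$ we have $l(sw)=l(w)-1$, so $l(sw\theta(s))\in\{l(w)-2,\,l(w)\}$, and Lemma~\ref{lem:cox-inv3} shows that $l(sw\theta(s))=l(w)$ forces $sw\theta(s)=w$; moreover $l(sw\theta(s))<l(w)$ can only occur for $s\in D(w)$ (otherwise $l(sw)=l(w)+1$ and $l(sw\theta(s))\ge l(w)$). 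We thus have a dichotomy: either \emph{(A)} some $s\in D(w)$ satisfies $l(sw\theta(s))=l(w)-2$, or \emph{(B)} $sw\theta(s)=w$, equivalently $sw=w\theta(s)$, for \emph{every} $s\in D(w)$.

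In case~(A), fix such an $s$ and put $w':=sw\theta(s)$. Using $\theta^2=\id$ and $\theta(w)=w^{-1}$ one checks $\theta(w')=\theta(s)\,w^{-1}s=(sw\theta(s))^{-1}=(w')^{-1}$, so $w'\in\Inv^\theta(W)$, and $l(w')=l(w)-2$. By the inductive hypothesis $w'=s_2\cdots s_h\,w_I\,\theta(s_h)\cdots\theta(s_2)$ with $l(w')=l(w_I)+2(h-1)$ and $I$ a $\theta$-stable spherical subset. Setting $s_1:=s$ we obtain $w=s\,w'\,\theta(s)=s_1s_2\cdots s_h\,w_I\,\theta(s_h)\cdots\theta(s_1)$, and $l(w)=l(w')+2=l(w_I)+2h$, which is the required expression (it is automatically length-additive, since the lengths add up).

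Case~(B) is the main obstacle. Write $I:=D(w)$; we must show $w=w_I$ with $W_I$ spherical and $\theta$-stable, so that $h=0$ works. First, iterating $sw=w\theta(s)$ along a word in $I$ yields $uw=w\theta(u)$ for all $u\in W_I$. Next factor $w=a\,b$, where $b$ is the minimal-length element of the coset $W_Iw$ and $a\in W_I$; then $l(w)=l(a)+l(b)$ and $l(u'b)=l(u')+l(b)$ for all $u'\in W_I$, so for $s\in I$ we get $l(sw)=l(sa)+l(b)$, and hence $D(w)\cap I$ equals the left descent set of $a$. Since $D(w)=I$, every element of $I$ is a left descent of $a\in W_I$; an element of a Coxeter group having every simple reflection as a left descent is the longest element (and exists only in the finite case), so $W_I$ is spherical and $a=w_I$. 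Thus $w=w_Ib$ with $l(w)=l(w_I)+l(b)$. Applying $uw=w\theta(u)$ with $u=w_I$ and using $\theta(w_I)=w_{\theta(I)}$ gives $b=w_Iw=w\,w_{\theta(I)}$, whence $w=b\,w_{\theta(I)}$ with $l(w)=l(b)+l(w_{\theta(I)})$, so $b$ has no right descent in $\theta(I)$. If $b\neq 1$, choose $r$ with $l(rb)<l(b)$; since $b$ is a minimal coset representative, $r\notin I$, so $l(rw)=l(w)+1$ because $r\notin D(w)$. But for $t\in\theta(I)$ we have $l(bt)=l(b)+1$, hence $l(rbt)\ge l(bt)-1=l(b)>l(rb)$, so $rb$ likewise has no right descent in $\theta(I)$, giving $l(rw)=l(rb\cdot w_{\theta(I)})=l(rb)+l(w_{\theta(I)})=l(w)-1$ --- a contradiction. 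Therefore $b=1$ and $w=w_I$. Finally $w_{\theta(I)}=\theta(w_I)=\theta(w)=w^{-1}=w_I$, and since the longest element $w_J$ of a finite parabolic $W_J$ satisfies $D(w_J)=J$, we conclude $I=\theta(I)$.

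The steep-descent reduction in case~(A) and the length bookkeeping are routine once Lemma~\ref{lem:cox-inv3} is at hand. The delicate part is case~(B): setting up the two length-additive factorisations $w=w_Ib=b\,w_{\theta(I)}$ and running the descent argument that collapses $b$ to the identity --- this is exactly where the defining property $D(w)=I$ of this case and the identity $\theta(w)=w^{-1}$ really get used.
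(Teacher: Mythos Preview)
Your proof is correct and follows essentially the same inductive strategy as the paper: split into the steep-descent case (A) and the stationary case (B), and in case (B) identify $I=D(w)$ and show $w=w_I$. The only difference is in the bookkeeping of case (B): the paper writes $w=w_I\cdot(w_Iw)$ (citing \cite[Proposition~2.17 and Corollary~2.18]{Abramenko/Brown:2008} for sphericity and length-additivity) and shows directly that $w_Iw$ has no right descent via the estimate $l(\theta(s)ws)\le l(\theta(s)w_I)+l(w_Iws)=l(w)-2$, while you set up the two factorisations $w=w_Ib=b\,w_{\theta(I)}$ via the relation $uw=w\theta(u)$ and rule out a left descent of $b$ using the second one. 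Both arguments are short and equivalent; yours is slightly more self-contained in that you derive sphericity of $W_I$ from the descent set rather than quoting it.
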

\begin{proof}
We prove the claim by induction on $l(w)$ based on the trivial case $w=1_W$. Let $l(w)>0$ and assume that the claim is true for all $\theta$-twisted involutions $w'$ with $l(w) > l(w')$. If there exists $s\in S$ with $l(sw\theta(s))=l(w)-2$, then by induction there is nothing to show.
By Lemma \ref{lem:cox-inv3} it therefore remains to deal with the situation that each $s\in S$ with $l(sw)<l(w)$ satisfies $sw\theta(s)=w$.
By \cite[Proposition~2.17 and Corollary~2.18]{Abramenko/Brown:2008} the set $I:=\{s\in S\mid l(sw)<l(w)\}$ is spherical and each reduced $I$-word can occur as an initial subword of a reduced decomposition of $w$; in particular $l(w_Iw)=l(w)-(w_I)$.
Hence, if there exists $s \in S$ such that $l(w_I ws)<l(w_I w)$, then $l(ws)<l(w)$. In this case Lemma \ref{lem:cox-inv3} implies $l(\theta(s)w)<l(w)$, so that $\theta(s)\in I$. Therefore $$l(\theta(s)ws)=l(\theta(s)w_Iw_Iws)\leq l(\theta(s)w_I)+l(w_Iws)=l(w_I)-1+l(w_Iw)-1=l(w)-2,$$ a contradiction to $\theta(s)ws=w$.
This implies that $s \in S$ with $l(w_I ws)<l(w_I w)$ cannot exist, so that $w_I = w$. Finally, the observation $\theta(w) = w^{-1} = w$ implies that $\theta(I) = I$.
\end{proof}

\begin{remark}
In case $\theta = \mathrm{id}$ the above results give information on the structure of conjugacy classes of involutions in Coxeter groups; see also \cite{Richardson:1982c}.
\end{remark}

We can now draw some conclusions for $\theta$-codistances. 

\begin{lemma} \label{lem:equal-thcod}
If two adjacent chambers have the same numerical \thcod, then they have the same \thcod.
\end{lemma}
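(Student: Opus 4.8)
The plan is to use the twin building axioms, specifically axiom \ref{axioms:Tw2}, together with the characterization of $\theta$-twisted involutions and Lemma \ref{lem:cox-inv3}. Let $c, d$ be $i$-adjacent chambers of $\mcC$ with $\delta_\eps(c,d) = s \in S$ (both in $\mcC_\eps$ for some $\eps$), and suppose $\lt(c) = \lt(d)$; write $w := \dt(c) = \delta^*(c, \theta(c))$ and $v := \dt(d) = \delta^*(d, \theta(d))$, so $l(w) = l(v)$. The goal is to show $w = v$. First I would observe that since $\theta$ preserves adjacency, $\theta(c)$ and $\theta(d)$ are also $s'$-adjacent in $\mcC_{-\eps}$, where $s' = \theta(s)$ (using that $\theta$ induces the automorphism $\theta_x$ of $(W,S)$; note that when $\theta$ is a flip, $s' = s$, but in the quasi-flip case we need the twisted version). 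Recall also from the discussion opening Section \ref{sec:strong-flips} that $w, v \in \Inv^\theta(W)$.

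The key step is a two-sided estimate on $\delta^*(c, \theta(d))$. On one hand, applying axiom \ref{axioms:Tw2} (or \ref{axioms:Tw3}) to the pair $(c, \theta(c))$ and the chamber $\theta(d)$, which is $\theta(s)$-adjacent to $\theta(c)$ in $\mcC_{-\eps}$, we get $\delta^*(c, \theta(d)) \in \{w, w\theta(s)\}$. On the other hand, using axiom \ref{axioms:Tw1} to write $\delta^*(c, \theta(d)) = \delta^*(\theta(d), c)^{-1}$ and applying \ref{axioms:Tw2}/\ref{axioms:Tw3} to the pair $(\theta(d), d)$ — noting $\delta^*(\theta(d), d) = \dt(d)^{-1} = v^{-1}$ by axiom \ref{axioms:Tw1} — together with the fact that $c$ is $s$-adjacent to $d$, we get $\delta^*(\theta(d), c) \in \{v^{-1}, v^{-1}s\}$, hence $\delta^*(c, \theta(d)) \in \{v, sv\}$. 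Combining: the common value $\delta^*(c,\theta(d))$ lies in $\{w, w\theta(s)\} \cap \{v, sv\}$.

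Now I would do a short case analysis using $l(w) = l(v)$. Since $w, v$ are $\theta$-twisted involutions, Lemma \ref{lem:cox-inv3} gives $l(sw) = l(w\theta(s))$ and $l(sv) = l(v\theta(s))$, so the four elements $w, w\theta(s), v, sv$ have lengths among $\{l(w), l(w) \pm 1\}$, and $w\theta(s)$, $sv$ differ from $w$, $v$ respectively in length by exactly one. If $w = v$ we are done. Otherwise the intersection forces one of: $w = sv$, or $w\theta(s) = v$, or $w\theta(s) = sv$. In the first two cases, length considerations ($l(w) = l(v)$) are immediately contradicted since $l(sv) = l(v) \pm 1 \neq l(v) = l(w)$ and similarly $l(w\theta(s)) = l(w) \pm 1 \neq l(w) = l(v)$. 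The remaining case $w\theta(s) = sv$, i.e. $sw\theta(s) = v$, combined with $l(v) = l(w)$, is precisely the hypothesis of the second part of Lemma \ref{lem:cox-inv3}, which yields $sw\theta(s) = w$, hence $v = w$.

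The main obstacle I anticipate is bookkeeping the twist correctly: one must be careful that $\theta$ sends an $s$-panel to a $\theta(s)$-panel (not an $s$-panel) in the quasi-flip case, and correspondingly that the ``adjacency'' estimate on the $\theta$-side produces $w\theta(s)$ rather than $ws$. Once the twist is tracked faithfully, Lemma \ref{lem:cox-inv3} does all the real work and the argument closes cleanly; there is no subtlety in which building half $c$ and $d$ live in, since the argument is symmetric in $\eps$.
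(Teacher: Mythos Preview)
Your proof is correct and takes essentially the same approach as the paper's own proof: both establish that $\dt(d)$ lies in the set $\{w,\, sw,\, w\theta(s),\, sw\theta(s)\}$, eliminate $sw$ and $w\theta(s)$ by the length hypothesis, and then invoke Lemma~\ref{lem:cox-inv3} to handle the case $\dt(d)=sw\theta(s)$. The paper simply asserts the four-element set directly, while you derive it via the intermediate codistance $\delta^*(c,\theta(d))$; the arguments are otherwise identical.
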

\begin{proof}
Let $c$, $c'$ be $s$-adjacent chambers, which implies $\dt(c') \in \{ \dt(c), s\dt(c), \dt(c)\theta(s), s\dt(c)\theta(s) \}$. From $\lt(c) = \lt(c')$ we conclude $\dt(c') = \dt(c)$ or $\dt(c') = s\dt(c)\theta(s)$. In the latter case necessarily $l(s\dt(c)\theta(s))=l(\dt(c))$, so that Lemma \ref{lem:cox-inv3} yields $\dt(c') = s\dt(c)\theta(s) = \dt(c)$.
\end{proof}

The following lemma is useful when trying to determine whether a panel $P$ is parallel to its image $\theta(P)$.

\begin{lemma} \label{lem:flip-steep-descent}
Let $c$ be a chamber with \thcod $w:=\dt(c)$ and let $s \in S$. 
\begin{enumerate}
\item \label{lem:flip-steep-descent1} If $l(sw\theta(s))=l(w)-2$, then each
  chamber in $P_s(c)\setminus\{c\}$ has \thcod $sw\theta(s)$. Moreover, $\mathrm{proj}_{P_s(c)}(\theta(P_s(c))) = \{ c \}$ and $\mathrm{proj}_{\theta(P_s(c))}(P_s(c)) = \{ \theta(c) \}$.
\item \label{lem:flip-steep-descent2} If $l(sw\theta(s))=l(w)$, then the panels $P_s(c)$ and $\theta(P_s(c))$ are parallel. Moreover, $\mathrm{proj}_{P_s(c)}(\theta(c)) = c$ if and only if $l(sw) < l(w)$.
\item \label{lem:flip-steep-descent3} If $l(sw\theta(s))=l(w)+2$, then there exists a unique chamber $d \in P_s(c)$ with $\dt(d) = sw\theta(s)$. Moreover, $\mathrm{proj}_{P_s(c)}(\theta(P_s(c))) = \{ d \}$ and $\mathrm{proj}_{\theta(P_s(c))}(P_s(c)) = \{ \theta(d) \}$.
\end{enumerate}
In particular, $P_s(c)$ is parallel to $\theta(P_s(c))$ if and only if $l(w)=l(sw\theta(s))$ if and only if $w=sw\theta(s)$. 
\end{lemma}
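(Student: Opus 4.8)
Write $w:=\dt(c)$, which is a $\theta$-twisted involution (as noted at the start of Section~\ref{sec:strong-flips}), and fix a chamber $c'\in P_s(c)\setminus\{c\}$, so that $\delta_\eps(c,c')=s$ and, since $\theta$ preserves adjacency, $\delta_{-\eps}(\theta(c),\theta(c'))=\theta(s)$. The idea is to compute successively $\delta^*(c',\theta(c))$, $\delta^*(c,\theta(c'))$ and finally $\dt(c')=\delta^*(c',\theta(c'))$ from the twin-building axioms \ref{axioms:Tw1}--\ref{axioms:Tw3}, using throughout the characterization (recalled before this lemma, from \cite[Lemma~5.149]{Abramenko/Brown:2008}) that $\proj_R$ of a chamber onto a spherical residue $R$ is the unique chamber of $R$ of maximal codistance, and that the codistances to a fixed chamber take, on a panel, exactly two values $v,sv$ with $l(v)=l(sv)+1$. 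The Coxeter-theoretic input is Lemma~\ref{lem:cox-inv3}: $l(sw)=l(w\theta(s))$, and $l(sw\theta(s))=l(w)$ forces $sw\theta(s)=w$. Since $|l(sw\theta(s))-l(sw)|\le 1$, the hypothesis $l(sw\theta(s))=l(w)\mp2$ of cases \ref{lem:flip-steep-descent1}, \ref{lem:flip-steep-descent3} forces $l(sw)=l(w)\mp1$; this is the first step in those cases, and then the three cases correspond to $l(sw)=l(w)-1$, $l(sw)=l(w)\pm1$, $l(sw)=l(w)+1$.

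For case \ref{lem:flip-steep-descent1}: once $l(sw)=l(w)-1$ is known, \ref{axioms:Tw2} gives $\delta^*(c',\theta(c))=sw$, and a second application of \ref{axioms:Tw2} (legitimate since $l(sw\cdot\theta(s))=l(w)-2=l(sw)-1$) gives $\dt(c')=sw\theta(s)$, uniformly in $c'$. On $P_s(c)$ the codistances to $\theta(c)$ are $w$ at $c$ (length $l(w)$) and $sw$ elsewhere (length $l(w)-1$), so the maximum is at $c$ and $\proj_{P_s(c)}(\theta(c))=c$; computing $\delta^*(c,\theta(c'))=w\theta(s)$ by \ref{axioms:Tw2} and comparing lengths on $P_s(c)$ shows $\proj_{P_s(c)}(\theta(c'))=c$ too, hence $\proj_{P_s(c)}(\theta(P_s(c)))=\{c\}$; the dual statement follows from $\theta(\proj_R(d))=\proj_{\theta(R)}(\theta(d))$, valid because $\theta$ is an automorphism of $\mcC$ up to a length-preserving automorphism of $W$. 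Case \ref{lem:flip-steep-descent3} is similar but has a genuine twist: here $l(sw)=l(w)+1$, so the codistances to $\theta(c)$ on $P_s(c)$ take the values $w$ and $sw$, the longer value $sw$ occurring at a single chamber $d:=\proj_{P_s(c)}(\theta(c))\ne c$; then $\dt(d)\in\{sw,sw\theta(s)\}$, and $\dt(d)=sw$ is impossible because a $\theta$-twisted involution equal to $sw$ would satisfy $sw=w\theta(s)$, whence $l(sw\theta(s))=l(w)\ne l(w)+2$. Thus $\dt(d)=sw\theta(s)$; all other chambers of $P_s(c)$ have $\theta$-codistance of length $\le l(w)+1$, so $d$ is the unique one with $\dt=sw\theta(s)$, and the projection statements again follow from the maximal-codistance characterization together with $\theta$-equivariance.

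Case \ref{lem:flip-steep-descent2} is where the work lies, since it asserts parallelism rather than a collapsing projection. Here Lemma~\ref{lem:cox-inv3} gives $sw\theta(s)=w$ at once, and one splits on $l(sw)=l(w)-1$ versus $l(sw)=l(w)+1$. I plan to use the elementary fact (to be verified) that for two panels $P$, $Q$ lying in opposite halves of a twin building, $\proj_P(Q)=P$ as soon as $\proj_P\colon Q\to P$ is non-constant — a panel has no residues other than itself and single chambers — and that a constant $\proj_P$ forces a constant $\proj_Q$ and conversely; hence $P_s(c)$ and $\theta(P_s(c))$ are parallel once two chambers of $\theta(P_s(c))$ are exhibited with distinct projections onto $P_s(c)$. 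By the length comparison above, $\proj_{P_s(c)}(\theta(c))=c$ exactly when $l(sw)<l(w)$, which also proves the ``Moreover'' clause; for a suitably chosen second chamber $\theta(c')$ one computes $\delta^*(c,\theta(c'))$ by \ref{axioms:Tw2}, and then $sw\theta(s)=w$ pins down $\proj_{P_s(c)}(\theta(c'))$ and shows it differs from $\proj_{P_s(c)}(\theta(c))$, the point being that the contrary assumption would make one of the two codistance values on the panel have the wrong length relative to the other.

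Finally, the concluding ``in particular'' combines the three cases: parallelism holds in case \ref{lem:flip-steep-descent2} by the above and fails in cases \ref{lem:flip-steep-descent1}, \ref{lem:flip-steep-descent3} because there $\proj_{P_s(c)}(\theta(P_s(c)))$ is a single chamber while $|P_s(c)|\ge2$; and $l(w)=l(sw\theta(s))\Leftrightarrow w=sw\theta(s)$ is exactly Lemma~\ref{lem:cox-inv3}. The main obstacle I anticipate is the parallelism argument in case \ref{lem:flip-steep-descent2}: cleanly justifying the ``non-constancy suffices'' reduction and choosing the auxiliary chamber $c'$ correctly in the subcase $l(sw)=l(w)+1$. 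Everything else is a disciplined but essentially routine unwinding of \ref{axioms:Tw1}--\ref{axioms:Tw3} against the two Coxeter lemmas.
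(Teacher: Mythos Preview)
Your proposal is correct, and for part \ref{lem:flip-steep-descent1} it is essentially identical to the paper's argument. In parts \ref{lem:flip-steep-descent2} and \ref{lem:flip-steep-descent3} you take genuinely different routes, both of which work.

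For \ref{lem:flip-steep-descent2}, the paper argues by contradiction: assuming $\proj_{P_s(c)}(\theta(P_s(c)))$ is a single chamber, it computes the $\theta$-codistance of a second chamber and obtains $l(sw\theta(s))=l(w)\pm 2$. Your constructive approach---exhibiting two chambers of $\theta(P_s(c))$ with distinct projections onto $P_s(c)$---is equally valid; note that both arguments rely on the dichotomy that the projection of one panel onto another is either a single chamber or the whole panel, which you correctly flag as needing verification. In the subcase $l(sw)>l(w)$ the right auxiliary chamber is $c':=\proj_{P_s(c)}(\theta(c))$, and then $\theta$-equivariance gives $\delta^*(c,\theta(c'))=w\theta(s)$, so $\proj_{P_s(c)}(\theta(c'))=c\neq c'$.

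For \ref{lem:flip-steep-descent3}, the paper first shows $\proj_{\theta(P_s(c))}$ is non-surjective on $P_s(c)$ (hence constant, equal to $\theta(d)$), deduces $\dt(d)=sw\theta(s)$, and then simply invokes part \ref{lem:flip-steep-descent1} applied to $d$ to get the remaining assertions for free. Your route---ruling out $\dt(d)=sw$ because $sw$ is a $\theta$-twisted involution only if $sw\theta(s)=w$---is a nice direct computation of $\dt(d)$, but it leaves the projection statements to be checked separately; your sentence ``the projection statements again follow from the maximal-codistance characterization'' hides a small computation (for $c''\neq d$ one has $\delta^*(c'',\theta(c'))\in\{w,w\theta(s)\}\cap\{w,sw\}=\{w\}$ since $w\theta(s)\neq sw$), and the cleanest way to finish is in fact to apply your already-proved part \ref{lem:flip-steep-descent1} to $d$, exactly as the paper does.
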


\begin{proof}
(\ref{lem:flip-steep-descent1}) Let $d\in P_s(c)\setminus\{c\}$. Since
$l(sw\theta(s))=l(w)-2$, one has $l(sw)=l(w\theta(s))=l(w)-1$, and the twin building axioms first imply $\delta^*(c,\theta(d))=w\theta(s)$ and then $\delta^*(d,\theta(d))=sw\theta(s)$. The second claim follows from $l(\delta^*(c,\theta(d)))=l(w\theta(s)) >  l(sw\theta(s)) = l(\delta^*(d,\theta(d)))$.

(\ref{lem:flip-steep-descent2})
Assume by way of contradiction that the panels $P_s(c)$ and $\theta(P_s(c))$
are not parallel. If $\mathrm{proj}_{P_s(c)}(\theta(P_s(c))) = \{ c \}$, then
for each $d \in P_s(c)\setminus\{c\}$ one has $l(\delta^*(d,\theta(c))) < l(\delta^*(c,\theta(c)))$ and, thus, $\delta^*(d,\theta(c)) = sw$. Application of $\theta$ implies $\mathrm{proj}_{\theta(P_s(c))}(P_s(c)) = \{ \theta(c) \}$, so that $l(\delta^*(d,\theta(d))) < l(\delta^*(d,\theta(c)))$ and, necessarily, $\delta^*(d,\theta(d))) = sw\theta(s)$, a contradiction to $l(sw\theta(s))=l(w)$. If $\mathrm{proj}_{P_s(c)}(\theta(P_s(c))) = \{ x \} \neq \{ c \}$, then $l(\delta^*(x,\theta(c))) > l(\delta^*(c,\theta(c)))$ and, thus, $\delta^*(x,\theta(c)) = sw$. Application of $\theta$ implies $\mathrm{proj}_{\theta(P_s(c))}(P_s(c)) = \{ \theta(x) \} \neq \{ \theta(c) \}$ and therefore $l(\delta^*(x,\theta(x))) > l(\delta^*(x,\theta(c)))$, whence $\delta^*(x,\theta(x))) = sw\theta(s)$, another contradiction.
The second claim follows by similar arguments. 

(\ref{lem:flip-steep-descent3})
Let $d := \mathrm{proj}_{P_s(c)}(\theta(c))$. Since $l(sw) > l(w)$, necessarily $d \neq c$. Since $l(sw\theta(s)) > l(sw)$, moreover $\mathrm{proj}_{\theta(P_s(c))}(d) \neq \theta(c)$. Hence $\mathrm{proj}_{\theta(P_s(c))} : P_s(c) \to \theta(P_s(c))$ is not surjective, so that $\theta(d) = \mathrm{proj}_{\theta(P_s(c))}(c)$ implies $\theta(d) = \mathrm{proj}_{\theta(P_s(c))}(d)$. Therefore the claim follows from item (\ref{lem:flip-steep-descent1}).

\medskip Items (\ref{lem:flip-steep-descent1})--(\ref{lem:flip-steep-descent3}) imply the first equivalence of the final statement. The second equivalence follows from Lemma \ref{lem:cox-inv3}.
\end{proof}

\begin{lemma} \label{lem:strong-flip-descent}
 Let $\theta$ be a strong quasi-flip and let $c$ be a chamber with \thcod
 $w$. If there exists $s\in S$ with $l(sw)<l(w)$, then there is a chamber $d$ with $\lt(d) < \lt(c)$ that is $s$-adjacent to $c$. In particular, strong quasi-flips are proper.
\end{lemma}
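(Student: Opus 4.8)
The plan is to distinguish cases according to the value of $l(sw\theta(s))$, where $w=\dt(c)$. Recall from the discussion preceding Definition~\ref{itheta} that $w\in\Inv^\theta(W)$, so Lemma~\ref{lem:cox-inv3} gives $l(w\theta(s))=l(sw)$ and, whenever $l(sw\theta(s))=l(w)$, also $sw\theta(s)=w$. Since $l(sw)<l(w)$ forces $l(sw)=l(w)-1$, the only possibilities are $l(sw\theta(s))=l(w)-2$ and $l(sw\theta(s))=l(w)$; in particular case~(\ref{lem:flip-steep-descent3}) of Lemma~\ref{lem:flip-steep-descent} does not occur. If $l(sw\theta(s))=l(w)-2$, then Lemma~\ref{lem:flip-steep-descent}(\ref{lem:flip-steep-descent1}) is enough: every chamber $d\in P_s(c)\setminus\{c\}$ (a non-empty set, panels having at least two chambers) satisfies $\dt(d)=sw\theta(s)$, hence $\lt(d)=l(w)-2<\lt(c)$, and such a $d$ is $s$-adjacent to $c$. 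So all the work is in the case $l(sw\theta(s))=l(w)$, and this is where strongness of $\theta$ has to enter.

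In that case I would put $P:=P_s(c)$ and invoke Lemma~\ref{lem:flip-steep-descent}(\ref{lem:flip-steep-descent2}): the panels $P$ and $\theta(P)$ are parallel, and since $l(sw)<l(w)$ one has $\proj_P(\theta(c))=c$, i.e.\ $c\in\proj_P(\theta)$ in the notation of Definition~\ref{non-strong}. As $\theta$ is strong, $\proj_P(\theta)\neq P$, so there is a chamber $d\in P$ with $\proj_P(\theta(d))\neq d$; since $c$ does lie in $\proj_P(\theta)$, necessarily $d\neq c$, and thus $d$ is $s$-adjacent to $c$. It then remains to bound $\lt(d)$, and I expect to prove the exact statement $\dt(d)=w\theta(s)$, whose length is $l(sw)=l(w)-1<\lt(c)$ by Lemma~\ref{lem:cox-inv3}.

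To compute $\dt(d)$ I would use the gate property of spherical residues (\cite[Lemma~5.149]{Abramenko/Brown:2008}) for the panel $\theta(P)$ (which lies in the half of $\mcC$ not containing $c$) and the chamber $d$; write $g:=\proj_{\theta(P)}(d)$. The two inputs needed are: (i) $\delta^*(\theta(c),d)=w^{-1}s$, which follows from axiom~\ref{axioms:Tw2} applied to $\delta^*(\theta(c),c)=w^{-1}$ together with $l(w^{-1}s)=l(sw)=l(w)-1$; and (ii) $g\notin\{\theta(c),\theta(d)\}$. For~(ii) I would use that parallel residues are linked by mutually inverse projection bijections $\proj_P\colon\theta(P)\to P$ and $\proj_{\theta(P)}\colon P\to\theta(P)$: from $\proj_P(\theta(c))=c$ one gets $\proj_{\theta(P)}(c)=\theta(c)$, so $g\neq\theta(c)$ because $d\neq c$; and $g=\theta(d)$ would give $\proj_P(\theta(d))=\proj_P(g)=d$, contradicting the choice of $d$. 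Granting~(ii), the chambers $\theta(c),\theta(d),g$ are pairwise distinct in the $\theta(s)$-panel $\theta(P)$, so the gate formula yields $w^{-1}s=\delta^*(\theta(c),d)=\theta(s)\,\delta^*(g,d)$, whence $\delta^*(g,d)=\theta(s)\,w^{-1}s=w^{-1}$ (the last equality being $sw\theta(s)=w$ inverted), and then $\delta^*(\theta(d),d)=\theta(s)\,\delta^*(g,d)=\theta(s)\,w^{-1}$, so that $\dt(d)=\delta^*(d,\theta(d))=w\theta(s)$, as wanted.

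For the final assertion I would iterate: the twin building is non-empty, and starting from any chamber, as long as its numerical \thcod is positive there is $s\in S$ with $l(sw)<l(w)$ for $w$ its \thcod, so the first part produces a chamber of strictly smaller numerical \thcod; as $\lt$ is a non-negative integer this terminates at a chamber $c_0$ with $\lt(c_0)=0$, i.e.\ $\dt(c_0)=1_W$, so $\theta$ is proper. The main obstacle is the case $l(sw\theta(s))=l(w)$: there one must convert the purely combinatorial inequality $\proj_P(\theta)\neq P$ into a genuine descent, and the delicate point is to pin down the gate $g=\proj_{\theta(P)}(d)$ --- in particular to rule out $g=\theta(d)$, which would instead force $\dt(d)=w$ of length $l(w)$ --- so that the codistance computation really outputs $w\theta(s)$.
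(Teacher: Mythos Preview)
Your proof is correct and follows the same overall strategy as the paper: use Lemma~\ref{lem:flip-steep-descent} to see that $c\in\proj_{P_s(c)}(\theta)$, invoke strongness to find $d\in P_s(c)$ with $\proj_{P_s(c)}(\theta(d))\neq d$, and conclude $\lt(d)<\lt(c)$.

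The paper's argument is somewhat more economical in two respects. First, it does not split into the cases $l(sw\theta(s))=l(w)-2$ and $l(sw\theta(s))=l(w)$: since Lemma~\ref{lem:flip-steep-descent}(\ref{lem:flip-steep-descent1}) and~(\ref{lem:flip-steep-descent2}) both yield $\proj_{P_s(c)}(\theta(c))=c$, one can proceed uniformly. Second, in your case $l(sw\theta(s))=l(w)$ you carry out a careful gate computation in $\theta(P)$ to determine $\dt(d)=w\theta(s)$ exactly. The paper instead observes directly that $\proj_{P_s(c)}(\theta(d))\neq d$ forces $\lt(d)$ to be strictly smaller than the maximal codistance from $\theta(d)$ to $P_s(c)$, and this maximum is at most $l(w)$ (equal to $l(w)$ in the parallel case, and $l(w)-1$ in the other). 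What your more explicit route buys is the precise value of $\dt(d)$, but for the lemma as stated only the strict inequality is needed.
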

\begin{proof}
For $w=1_W$ there is nothing to show. Otherwise, there exists $s\in S$ such
that $l(sw)<l(w)$. Lemma
\ref{lem:flip-steep-descent}(\ref{lem:flip-steep-descent1})(\ref{lem:flip-steep-descent2})
implies $\proj_{P_s(c)}(\theta(c))=c$ for 
the $s$-panel $P_s(c)$ containing $c$. 
As $\theta$ is strong, there exists a chamber $d \in P_s(c)$ with $\proj_{P_s(c)}(\theta(d))\neq d$. Hence $\lt(d) < \lt(c)$, so that by induction there exists a Phan chamber.
\end{proof}

The following is a $\theta$-twisted variation of a well-known result.

\begin{lemma}  \label{lem:pretty-cool-descent}
Let $r\in W$ be a $\theta$-twisted involution and let $w\in W$ such that $l(w^{-1}r\theta(w))=l(r)-2l(w)$. 
\begin{enumerate}
\item\label{lem:pretty-cool-descent1} If $c,d\in\mcC_\eps$ with $\dt(c)=r$ and $\delta_\eps(c,d) = w$, then $\dt(d)=w^{-1}r\theta(w)$.
\item\label{lem:pretty-cool-descent2} If $d\in\mcC_\eps$ with $\dt(d)=w^{-1}r\theta(w)$, then there exists a unique chamber $c$ with $\delta_\eps(c,d) = w$ and $\dt(c)=r$.
\end{enumerate}
In either case, the convex hull of $d$ and $\theta(d)$ contains $c$ and $\theta(c)$.
\end{lemma}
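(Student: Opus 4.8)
The plan is to derive (1) by transporting the \thcod $\dt(c)=\delta^*(c,\theta(c))$ through $\mcC$ along minimal galleries, to derive (2) by induction on $l(w)$ using Lemma~\ref{lem:flip-steep-descent}(\ref{lem:flip-steep-descent3}), and to obtain the convexity assertion from the behaviour of codistance projections onto panels. Write $r':=w^{-1}r\theta(w)$. Since $\theta$ preserves lengths and $r\in\Inv^\theta(W)$, the hypothesis $l(r')=l(r)-2l(w)$ forces, by the triangle inequality, $l(w^{-1}r)=l(r\theta(w))=l(r)-l(w)$; hence $r\theta(w)=w\cdot r'$ is a reduced product and $r'\in\Inv^\theta(W)$. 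If $l(w)\ge1$ I would write $w=sw_1$ with $l(w)=l(w_1)+1$ and record that then $l(sr)=l(r\theta(s))=l(r)-1$, that $sr\theta(s)\in\Inv^\theta(W)$, that $l(sr\theta(s))=l(r)-2$ (the bound ``$\le$'' from the triangle inequality applied to $w_1^{-1}(sr\theta(s))\theta(w_1)=r'$, the bound ``$\ge$'' from $l(sr\theta(s))\ge l(sr)-1$), and that $l(w_1^{-1}(sr\theta(s))\theta(w_1))=l(sr\theta(s))-2l(w_1)$.

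For (1): starting from $\dt(c)=r$ and a minimal gallery from $c$ to $d$, iterating the twin building axioms (the reduced factorisation of $r^{-1}$ through $r^{-1}w$ supplies the length drop needed at each step) gives $\delta^*(\theta(c),d)=r^{-1}w$, i.e.\ $\delta^*(d,\theta(c))=w^{-1}r$; iterating them once more on the second argument along the image gallery (here $\delta_{-\eps}(\theta(c),\theta(d))=\theta(\delta_\eps(c,d))=\theta(w)$, as $\theta$ induces an automorphism of $(W,S)$) yields $\dt(d)=\delta^*(d,\theta(d))=w^{-1}r\theta(w)=r'$. The same device applied only to the second argument of $\delta^*(c,\theta(c))=r$ records, for later use, that $\delta^*(c,\theta(d))=r\theta(w)=w\cdot r'$.

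For (2): induct on $l(w)$, the case $w=1$ being trivial ($c=d$). For $l(w)\ge1$ write $w=sw_1$ as above; the inductive hypothesis applied to the twisted involution $sr\theta(s)$ and the element $w_1$ gives a unique $c_1$ with $\delta_\eps(c_1,d)=w_1$ and $\dt(c_1)=sr\theta(s)$. Since $l\big(s(sr\theta(s))\theta(s)\big)=l(r)=l(sr\theta(s))+2$, Lemma~\ref{lem:flip-steep-descent}(\ref{lem:flip-steep-descent3}) produces a unique $c\in P_s(c_1)$ with $\dt(c)=r$; this $c$ is distinct from $c_1$, so $\delta_\eps(c,c_1)=s$ and hence $\delta_\eps(c,d)=sw_1=w$ by the building axioms. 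For uniqueness, given $c^\flat$ with $\delta_\eps(c^\flat,d)=w$ and $\dt(c^\flat)=r$, put $c_1^\flat:=\proj_{P_s(c^\flat)}(d)$; then $\delta_\eps(c_1^\flat,d)=w_1$, and (1) applied to $c^\flat,c_1^\flat$ gives $\dt(c_1^\flat)=sr\theta(s)$, whence $c_1^\flat=c_1$ by induction and $c^\flat=c$ by the uniqueness clause of Lemma~\ref{lem:flip-steep-descent}(\ref{lem:flip-steep-descent3}).

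For the final assertion: in either case $c$ satisfies $\dt(c)=r$ and $\delta_\eps(c,d)=w$, so by the last sentence of the argument for (1) we have $\delta^*(c,\theta(d))=\delta_\eps(c,d)\cdot\delta^*(d,\theta(d))$ with the lengths adding. Fix a minimal gallery $c=x_0,\dots,x_k=d$ in $\mcC_\eps$; transporting $\delta^*(x_0,\theta(d))=w\cdot r'$ forward through the gallery with the twin building axioms shows $\delta^*(x_i,\theta(d))=\delta_\eps(x_i,d)\cdot r'$ with lengths adding for every $i$, so $l(\delta^*(x_i,\theta(d)))$ strictly decreases in $i$. Writing $t$ for the type of the step from $x_i$ to $x_{i+1}$, the chamber $x_i$ thus has strictly larger codistance to $\theta(d)$ than $x_{i+1}$ inside the panel $P_t(x_i)=P_t(x_{i+1})$; since in any panel the projection of a chamber of the other half is the unique chamber of maximal codistance to it while every other chamber is one step closer (\cite[Lemma~5.149]{Abramenko/Brown:2008}), this forces $x_i=\proj_{P_t(x_i)}(\theta(d))$. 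Now let $(N_\eps,N_{-\eps})$ be any convex pair with $d\in N_\eps$ and $\theta(d)\in N_{-\eps}$; a descending induction from $x_k=d\in N_\eps$, using that $(N_\eps,N_{-\eps})$ is convex with respect to the panel $P_t(x_{i+1})$ (which meets $N_\eps$), shows $x_i\in N_\eps$ for all $i$, so $c=x_0\in N_\eps$; hence $c$, and — since $\theta$ maps convex pairs to convex pairs and interchanges $d$ with $\theta(d)$ — also $\theta(c)$, lie in the convex hull of $d$ and $\theta(d)$. The main obstacle is precisely this last paragraph: identifying the panel-projections of $\theta(d)$ with the gallery chambers $x_i$ and then forcing them into an arbitrary convex pair; the computations underlying (1) and (2) are routine applications of the twin building axioms together with Lemmas~\ref{lem:cox-inv3} and~\ref{lem:flip-steep-descent} once the length identities above are established.
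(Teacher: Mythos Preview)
Your proof is correct and follows essentially the same strategy as the paper's. For (1) you transport the codistance directly via the twin building axioms, whereas the paper iterates Lemma~\ref{lem:flip-steep-descent}(\ref{lem:flip-steep-descent1}) along the gallery---these are equivalent one-step computations. For (2) both arguments induct on $l(w)$ using Lemma~\ref{lem:flip-steep-descent}(\ref{lem:flip-steep-descent3}); you peel $w=sw_1$ from the left while the paper peels $s_h$ from the right, and you spell out uniqueness more carefully, but the mechanism is identical. For the convex-hull assertion the paper simply writes ``immediate consequence of (2)'' (the inductive construction in (2) exhibits $c$ as an iterated panel-projection of $\theta$-images starting from $d$, hence lies in any convex pair containing $d,\theta(d)$); your explicit argument via $x_i=\proj_{P_t(x_i)}(\theta(d))$ and descending induction over an arbitrary convex pair makes the same point transparently and handles cases (1) and (2) uniformly without appealing to the uniqueness in (2).
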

\begin{proof}
One may write $w=s_1\cdots s_h$ with $s_i\in S$ and $l(w)=h$. 

(\ref{lem:pretty-cool-descent1})
Let $(c=c_0\sim_{s_1} c_1 \sim_{s_2} \cdots \sim_{s_h} c_n=d)$ be a minimal gallery from $c$ to $d$. By hypothesis $l(s_1r)=l(r)-1=l(r\theta(s_1))$, whence Lemma \ref{lem:flip-steep-descent}(\ref{lem:flip-steep-descent1}) implies $\dt(c_1)=s_1 r \theta(s_1)$. An iteration of this argument yields $\dt(d)=(s_h\cdots s_1) r  \theta(s_1 \cdots s_h) =  w^{-1}r\theta(w)$.

(\ref{lem:pretty-cool-descent2}) We prove this claim by an induction on $h$. Since for $h=0$ there is nothing to show, let $h>0$ and assume the claim holds for each $w' \in W$ with $l(w) > l(w')$. By Lemma \ref{lem:flip-steep-descent}(\ref{lem:flip-steep-descent3}) there exists a unique chamber $d'\in P_{s_h}(d)$ with $\dt(d') = s_hw^{-1}r\theta(w)\theta(s_h)= (s_{h-1}\cdots s_1) r \theta(s_1 \cdots s_{h-1})$, and the claim follows from the induction hypothesis.

\medskip
The final statement is an immediate consequence of (\ref{lem:pretty-cool-descent2}).
\end{proof}

We now apply our findings to study Phan residues which are minimal with respect to inclusion.

\begin{prop}\label{new}\label{lem:phanres-thcod-WI}\label{lem:phanres-intersect}\label{lem:min-phanres-sph}\label{lem:min-phanres-thcod-const}
\begin{enumerate}
\item\label{new1}If $c \in \mcC_\eps$ and $I \subseteq S$ with $\dt(c) \in W_I$, then for each $I \subseteq J \subseteq S$ the residue $R_J(c)$ is a Phan residue.
\item\label{new2}\label{lem:phanres-thcod-WI2}\label{lem:phanres-intersect2} If $R$ is a Phan residue of type $I$, then the image of the restriction of the \thcod to $R$ is contained in $W_I$. Moreover, if $R_1$ and $R_2$ are Phan residues with non-empty intersection, then $R_1\cap R_2$ is also a Phan residue.
\item\label{new3}\label{lem:min-phanres-thcod-const3}\label{lem:min-phanres-sph3}
  If $R$ is a minimal Phan residue of type $I$, then $I$ is spherical and the
  restriction of the \thcod to $R$ is constant with value $w_I$, the longest element of $W_I$.
\end{enumerate}
\end{prop}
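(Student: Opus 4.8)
The plan is to prove the three assertions in order, feeding each into the next, and to reduce everything to the normal form for $\theta$-twisted involutions (Proposition~\ref{prop:cox-twisted-inv}) together with the descent Lemma~\ref{lem:pretty-cool-descent}. The recurring elementary observation is that the support of $\dt(c)=\delta^*(c,\theta(c))$, i.e.\ the smallest $I_0\subseteq S$ with $\dt(c)\in W_{I_0}$, is $\theta$-stable: indeed $\theta(\dt(c))=\dt(c)^{-1}$ has the same support as $\dt(c)$, while $\operatorname{supp}(\theta(w))=\theta(\operatorname{supp}(w))$. So one may take $I=\operatorname{supp}(\dt(c))$ in part~\ref{new1}; the subsets of $S$ occurring in the proofs below are $\theta$-invariant, and for $\theta$-invariant $J$ one has $\theta(R_J(c))=R_J(\theta(c))$.

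For part~\ref{new1} I would argue in two steps. \emph{Propagation:} every $c'\in R_J(c)$ satisfies $\dt(c')\in W_J$; walking inside $R_J(c)$ it is enough to treat a single $s$-step with $s\in J$, where moving $c$ and $\theta(c)$ one step each gives $\dt(c')\in\{\dt(c),\,s\dt(c),\,\dt(c)\theta(s),\,s\dt(c)\theta(s)\}$, all of which lie in $W_J$ because $\theta(s)\in J$. \emph{Descent in $\theta(R_J(c))$:} for fixed $c'\in R_J(c)$, start at $\theta(c')\in R_J(\theta(c'))=\theta(R_J(c))$, where the codistance to $c'$ equals $\dt(c')$; whenever the current codistance $m$ is nontrivial, choose a descent $s\in\operatorname{supp}(m)\subseteq J$ and move, via axiom~\ref{axioms:Tw2}, to the $s$-adjacent chamber of the residue at which the codistance to $c'$ is $ms$. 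Since the support only shrinks, after $\lt(c')$ steps one reaches a chamber of $\theta(R_J(c))$ at codistance $1_W$ from $c'$, hence opposite $c'$; applying $\theta$ (and using $\dt(\theta(c))=\dt(c)^{-1}\in W_I$) gives the converse, so $R_J(c)$ is a Phan residue. This is the fiddly part: the descent has to be read carefully off the codistance axioms — in particular for non-spherical $J$, where no twin projection onto $R_J(c)$ is available — and the $\theta$-stability of the subsets of $S$ involved must be tracked so that $\theta$ really maps the residues in question onto residues of the same type.

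For the first half of part~\ref{new2}, let $R$ be a Phan residue of type $I$; then $\theta(R)$ also has type $I$, so $I$ is $\theta$-stable and $\theta(R)=R_I(\theta(c'))$ for $c'\in R$. Choose $d'\in\theta(R)$ opposite $c'$ and walk from $d'$ to $\theta(c')$ inside $R_I(\theta(c'))$: the codistance $\delta^*(c',\cdot)$ starts at $1_W$ and at each step is either unchanged or right-multiplied by a generator of $I$, hence never leaves $W_I$; in particular $\dt(c')=\delta^*(c',\theta(c'))\in W_I$. For the intersection statement, if $R_1,R_2$ are Phan residues of types $I_1,I_2$ meeting in a chamber $c$, then $R_1\cap R_2=R_{I_1\cap I_2}(c)$ and, by what was just shown, $\dt(c)\in W_{I_1}\cap W_{I_2}=W_{I_1\cap I_2}$; since $I_1\cap I_2$ is $\theta$-stable, part~\ref{new1} applied with $J=I_1\cap I_2$ shows $R_1\cap R_2$ is a Phan residue.

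For part~\ref{new3}, let $R$ be a minimal Phan residue of type $I$ and let $c'\in R$ be arbitrary; by part~\ref{new2}, $\dt(c')\in W_I$. Apply Proposition~\ref{prop:cox-twisted-inv} to the $\theta$-twisted involution $\dt(c')$ and write $\dt(c')=s_1\cdots s_h\,w_{I''}\,\theta(s_h)\cdots\theta(s_1)$ with $I''\subseteq S$ spherical and $\theta$-stable and $l(\dt(c'))=l(w_{I''})+2h$; as this reduced expression represents an element of $W_I$, all $s_i\in I$ and $I''\subseteq I$. Putting $u:=s_1\cdots s_h$, one checks $u^{-1}\dt(c')\theta(u)=w_{I''}$ and $l(w_{I''})=l(\dt(c'))-2l(u)$, so Lemma~\ref{lem:pretty-cool-descent}(\ref{lem:pretty-cool-descent1}) provides a chamber $e'\in R_I(c')=R$ with $\dt(e')=w_{I''}$. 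By part~\ref{new1}, $R_{I''}(e')$ is a Phan residue, and $R_{I''}(e')\subseteq R_I(e')=R$; minimality of $R$ forces $R_{I''}(e')=R$, hence $I''=I$. Thus $I=I''$ is spherical, and then $l(\dt(c'))=l(w_I)+2h\le l(w_I)$ forces $h=0$, i.e.\ $\dt(c')=w_{I''}=w_I$. Since $c'\in R$ was arbitrary, $\dt|_R\equiv w_I$. The only thing to watch here is that the normal form of Proposition~\ref{prop:cox-twisted-inv} does meet the length hypothesis of Lemma~\ref{lem:pretty-cool-descent}, which is precisely what that normal form supplies.
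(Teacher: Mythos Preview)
Your proof is correct, and for the intersection statement in part~\ref{new2} and all of part~\ref{new3} it runs essentially the same argument as the paper --- including the reduction via Proposition~\ref{prop:cox-twisted-inv} and Lemma~\ref{lem:pretty-cool-descent} to a chamber with $\theta$-codistance $w_{I''}$, followed by minimality forcing $I''=I$ and hence $h=0$.

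The genuine difference lies in part~\ref{new1} and the first half of part~\ref{new2}. The paper dispatches both in one line each via the Bruhat-order triangle inequality \cite[Lemma~5.140(1)]{Abramenko/Brown:2008}: for~\ref{new1} it picks a single $d$ with $\delta_\eps(c,d)=\dt(c)$ and computes $\delta^*(c,\theta(d))\le\dt(c)\,\theta(\dt(c))=1_W$, so $c\op\theta(d)$ exhibits the required opposite pair; for~\ref{new2} it takes $\theta(d)\op c$ in $\theta(R)$ and reads off $\dt(c)\le\delta^*(c,\theta(d))\,\delta_{-\eps}(\theta(d),\theta(c))\in W_I$. You instead run explicit gallery arguments --- propagating $\dt\in W_J$ chamber-by-chamber along $R_J(c)$, and then descending step-by-step inside $\theta(R_J(c))$ via axiom~\ref{axioms:Tw2} to manufacture an opposite for each chamber individually. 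Your route is longer but entirely self-contained (no appeal to \cite{Abramenko/Brown:2008}) and makes the mechanism transparent; the paper's is slicker and exploits that, for residues of the same type, a single opposite pair already makes them opposite. Your explicit bookkeeping of the $\theta$-stability of the type sets involved is careful and correct; the paper leaves this implicit, relying on the fact that the definition of Phan residue forces $R$ and $\theta(R)$ to have the same type.
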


\begin{proof}
(\ref{new1}) If $d$ is any chamber satisfying $\delta_\eps(c,d) = \dt(c) \in W_I$, then it is contained in $R_J(c)$ and we deduce from \cite[Lemma 5.140(1)]{Abramenko/Brown:2008} that $$\delta^*(c,\theta(d))
 \leq \delta^*(c,\theta(c)) \delta_{-\eps}(\theta(c),\theta(d))
 = \delta^*(c,\theta(c)) \theta(\delta_\eps(c,d))=\delta^*(c,\theta(c)) \theta(\delta^*(c,\theta(c))) = 1_W.$$ Application of $\theta$ yields $\delta^*(\theta(c),d)= 1_W$ and hence $R_J(c)$ is a Phan residue.

(\ref{new2}) Let $c\in R \subseteq \mcC_\eps$. Since $R$ is a Phan residue, by definition there exists $\theta(d)\in \theta(R)$ opposite $c$. Another application of \cite[Lemma 5.140(1)]{Abramenko/Brown:2008} yields $$\dt(c)=\delta^*(c,\theta(c))\leq \delta^*(c,\theta(d))\delta_{-\eps}(\theta(d),\theta(c)) = \delta_{-\eps}(\theta(d),\theta(c)) \in W_I.$$ 
Therefore, if $c\in R_1\cap R_2 \subseteq \mcC_\eps$ and $I_1$ and $I_2$ are the types of $R_1$ and $R_2$, respectively, then $\dt(c)\in W_{I_1\cap I_2}$. Hence $R_1 \cap R_2 = R_{I_1 \cap I_2}(c)$ is a Phan residue by (\ref{new1}).

(\ref{new3}) Let $Q \subseteq \mcC_\eps$ be a Phan residue of type $J$ and let $c \in Q$. By Proposition \ref{prop:cox-twisted-inv} and Lemma \ref{lem:pretty-cool-descent} there exist a spherical subset $I$ of $S$, an element $w'\in W$ with $w'\leq \dt(c)$ in the Bruhat order, and a chamber $c'\in\mcC_\eps$ satisfying $\delta_\eps(c,c')=w'$ and $\dt(c')=w_I$.
Since $\dt(c)\in W_J$ by (\ref{lem:phanres-thcod-WI2}), the element $w'<\dt(c)$ is contained in $W_J$ as well. Hence $c'\in Q$.
The residue $R_I(c') \subseteq Q$ is spherical and by (\ref{new1}) it is a Phan residue, because $\dt(c')=w_I \in W_I$. Hence each Phan residue contains a spherical Phan residue, so that minimal Phan residues are spherical.

It remains to show that the $\theta$-codistance is constant on $R_I(c')$, if $R_I(c')$ is a minimal Phan residue. For $d\in R_I(c')$, Proposition \ref{prop:cox-twisted-inv} and Lemma \ref{lem:pretty-cool-descent} imply the existence of a spherical subset $I'$ of $I$ and a chamber $d'\in R_I(c')$ such that $\dt(d')=w_{I'} \leq \dt(d) \leq w_I$. By (\ref{new1}) the residue $R_{I'}(d') \subseteq R_I(c')$ is a Phan residue. Minimality of $R_I(c')$ implies $R_{I'}(d') = R_I(c')$, whence $I' = I$. Therefore the claim follows from $w_{I} \leq \dt(d) \leq w_I$.
\end{proof}


\section{Structure of flip-flop systems} \label{chap:geom} \label{sec:geom-homo-inhConn} \label{sec:geom-main-thms} \label{sec:flip-flop-residual-conn}

Let $\mcC=((\mcC_+,\delta_+),(\mcC_-,\delta_-),\delta^*)$ be a twin building of type $(W,S)$ and let $\theta$ be a quasi-flip of $\mcC$. One of our goals in this section is to determine under which assumptions on $\mcC$ and $\theta$ the minimal Phan residues of $\mcC$ are of some constant type $K \subset S$; if that happens, then the pair $(\mcC, \theta)$ is called \Defn{homogeneous} or, more precisely, \Defn{$K$-homogeneous}. 
By Proposition \ref{lem:min-phanres-sph}(\ref{lem:min-phanres-sph3}) that type $K$ is necessarily spherical. Moreover, the numerical $\theta$-codistance function reaches a local minimum at the chambers of a minimal Phan residue. 

The second main object of study in this section, the flip-flop system, consists of all chambers on which the numerical $\theta$-codistance function is at a global minimum in the following sense:

\begin{defn} \label{dfn-flipflop}
Let $\theta$ be a quasi-flip of a twin building $\mcC=((\mcC_+,\delta_+),(\mcC_-,\delta_-),\delta^*)$ and let $R$ be a residue of $\mcC_\eps$, $\eps \in \{ +, - \}$. 
The \Defn{minimal numerical \thcod} of $R$ is defined as $\min_{c\in R}\; \lt(c) = \min_{c\in R}\; l(c,\theta(c)). $
 
Furthermore, the \Defn{induced flip-flop system} $\Rt$ on $R$ associated to $\theta$ is the sub-chamber system
  $\Rt := \{c \in R \mid \lt(c) = \min_{d\in R}\; \lt(d) \}$ of $\mcC_\eps$ with
the induced adjacency relations. For $R=\mcC_\eps$ the chamber system $\Ct := \mcC_\eps^\theta$ is called the \Defn{flip-flop system} on $\mcC$ associated to $\theta$.
\end{defn}

\begin{example} \label{BSflipflop}
Consider the flip $\theta$ given by transpose-inverse times field involution of the matrix group $\mathrm{SL}_n(\mathbb{F}_{q^2})$ discussed in Example \ref{exmpl:proj-space-form-flip} and let $(\mathcal{C},(\sim_{s_i})_i)$ be the building of type $A_{n-1}$ described in Example \ref{An3}, modelled on an $n$-dimensional $\mathbb{F}_{q^2}$-vector space $V$, endowed with the non-degenerate hermitian form $(\cdot,\cdot)$ induced by $\theta$. Then the set of chambers of the flip-flop system $\mathcal{C}^\theta$ equals $\{(V_1,...,V_{n-1}) \in \mathcal{C} \mid \mbox{$V_i$ is $(\cdot,\cdot)$-nondegenerate for each $1 \leq i \leq n-1$ } \}$.
\end{example}

\begin{remark}
The connectedness properties of the set $\Rt$ introduced in the preceding definition play a crucial role in the main result of \cite[Section 6]{Devillers/Muehlherr:2007}, where it is called $A_\theta(R)$.
\end{remark}

\begin{example}
By \cite{Bennett/Shpectorov:2004} the flip-flop system $\mathcal{C}^\theta$ described in Example \ref{BSflipflop} is connected, if $n \geq 3$ and $q \geq 3$, and simply connected, if $n \geq 4$ and $q \geq 4$.
\end{example}

\begin{remark} \label{lem:K-hom-then-Ct-union}
Note that Proposition \ref{prop:cox-twisted-inv} and Lemma \ref{lem:pretty-cool-descent} imply that each element of the flip-flop system $\Ct$ is contained in a minimal Phan residue. Therefore $\Ct \subseteq \mcC_\eps$ is a subset of the union of all minimal Phan residues of $\mcC_\eps$. Moreover, if $\theta$ is a homogeneous quasi-flip, then $\Ct\subseteq \mcC_\eps$ equals the union of all minimal Phan residues of $\mcC_\eps$ by Proposition \ref{lem:min-phanres-thcod-const}(\ref{lem:min-phanres-thcod-const3}).
\end{remark}

The following property allows us to measure from where the flip-flop system can be reached by using the strategy of local descent.

\begin{defn}
Let $R$ be a residue of $\mcC$. 
For $X \subseteq R$ the residue $R$ admits \Defn{direct descent from $X$ into $\Rt$}, if for each chamber $c \in X$ there exists a gallery in $R$ from $c$ to some chamber in $\Rt$ with the property that $\lt$ strictly decreases along the gallery.
If $R$ admits {direct descent from $R$ into $\Rt$}, then one also says that  $R$ admits \Defn{direct descent into $\Rt$}.
\end{defn}

\begin{lem} \label{lem:rank2-bypass}
Let $\theta$ be a quasi-flip of a twin building $\mcC$, let $R$ be a rank two residue that admits direct descent into a connected $\Rt$, and let $(c_0,c_1,c_2) \subseteq R$ be a gallery satisfying $\lt(c_0) < \lt(c_1)$ and $\lt(c_1) \geq \lt(c_2)$. Then there exists a gallery $\gamma \subseteq R$ from $c_0$ to $c_2$ such that $\lt(c_1) > \lt(d)$ for all $d \in \gamma\setminus\{c_2\}$ (see Figure \ref{fig:peak-plateau}).
\end{lem}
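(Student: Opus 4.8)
The plan is to argue by a minimal-counterexample / induction on $\lt(c_1)$, exploiting the rank two structure of $R$ via Lemma \ref{lem:flip-steep-descent}. First I would dispose of the trivial cases: if $c_1 = c_2$ or $c_0 = c_2$ there is nothing to prove, and if $\lt(c_2) < \lt(c_1)$ strictly we may simply try to "bend" the gallery down at $c_1$. The key local observation is this: since $c_0$ is $i$-adjacent to $c_1$ with $\lt(c_0) < \lt(c_1)$, Lemma \ref{lem:flip-steep-descent} applied to the panel $P_i(c_1) = P_i(c_0)$ forces $l(sw\theta(s)) = l(w)-2$ for $w := \dt(c_1)$ and $s$ the generator corresponding to $i$ (it is the only one of the three cases of that lemma consistent with a neighbour of strictly smaller numerical \thcod); in particular $c_0$ is the \emph{unique} chamber of $P_i(c_1)$ of smallest numerical \thcod, and \emph{every} chamber of $P_i(c_1)\setminus\{c_1\}$ has numerical \thcod equal to $\lt(c_0)$. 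Symmetrically, looking at the panel $P_j(c_1) = P_j(c_2)$ through which $c_1$ and $c_2$ are adjacent (say $j$-adjacent), the hypothesis $\lt(c_1) \ge \lt(c_2)$ together with Lemma \ref{lem:flip-steep-descent} tells us that either $\lt$ is non-increasing off $c_1$ on that panel too, or the panel $P_j(c_1)$ is parallel to $\theta(P_j(c_1))$ with $c_1$ the projection point.

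Next I would use direct descent and connectedness of $\Rt$ to produce a path avoiding the peak. Since $R$ admits direct descent into $\Rt$, starting from $c_0$ there is a gallery $\gamma_0$ in $R$ ending in $\Rt$ along which $\lt$ strictly decreases; in particular every chamber of $\gamma_0$ has numerical \thcod $\le \lt(c_0) < \lt(c_1)$. The task is then to connect $c_2$ into this descending regime without climbing back up through a chamber of \thcod $\ge \lt(c_1)$. Here the two cases split. If $\lt(c_2) < \lt(c_1)$, then starting from $c_2$ direct descent again gives a gallery $\gamma_2$ into $\Rt$ with $\lt$ strictly decreasing, hence staying strictly below $\lt(c_1)$; since $\Rt$ is connected and (being a single panel-type-free residue in rank two, it is a residue of rank $0$ or the whole thing) — more precisely, since $\Rt$ is connected and contained in the region $\{\lt < \lt(c_1)\}$, we can join the endpoint of $\gamma_0$ to the endpoint of $\gamma_2$ inside $\Rt$, and the concatenation $\gamma_0 \cdot (\text{path in }\Rt)\cdot \gamma_2^{-1}$ is a gallery from $c_0$ to $c_2$ all of whose chambers except possibly $c_2$ have $\lt < \lt(c_1)$; but in this subcase even $\lt(c_2) < \lt(c_1)$, so we are done and in fact get the stronger conclusion. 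The genuinely delicate subcase is $\lt(c_2) = \lt(c_1)$: then $c_2$ itself sits at the peak height, so we must descend \emph{away} from $c_2$ first. By Lemma \ref{lem:equal-thcod} applied along the $j$-panel, and by re-running the panel analysis of Lemma \ref{lem:flip-steep-descent} on $P_j(c_2)$, I would show that $P_j(c_2)$ contains a chamber $c_2'$ with $\lt(c_2') < \lt(c_2) = \lt(c_1)$ — this uses that $P_j(c_2) = P_j(c_1)$ and that $c_1$ is not the unique minimum of that panel (else $\lt(c_2) > \lt(c_1)$), together with direct descent forcing the panel not to be "flat at the top". Having found such a $c_2'$, we are back in the previous subcase with $(c_0, c_1, c_2')$, obtain a bypass $\gamma$ from $c_0$ to $c_2'$ below height $\lt(c_1)$, and append the single step $c_2' \sim_j c_2$; every interior chamber of the resulting gallery has $\lt < \lt(c_1)$, as required.

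The main obstacle I anticipate is the $\lt(c_2) = \lt(c_1)$ subcase, specifically ruling out the possibility that $P_j(c_1)$ is "flat at the peak", i.e.\ that $\lt$ equals $\lt(c_1)$ on a large part of $P_j(c_1)$ with no descending neighbour — one must extract the descent from the global hypothesis that $R$ admits direct descent into $\Rt$, not just from the local panel data, since a priori the descending gallery out of $c_2$ guaranteed by direct descent might start with a step in the $i$-direction leading back toward $c_1$'s height or beyond. I would handle this by noting that direct descent gives a gallery from $c_2$ strictly decreasing in $\lt$; its first chamber after $c_2$ has $\lt < \lt(c_2) = \lt(c_1)$ and lies in $P_i(c_2)$ or $P_j(c_2)$; if it lies in $P_j(c_2) = P_j(c_1)$ we take it as $c_2'$ and proceed as above, while if it lies in $P_i(c_2)$ we first detour through $P_i(c_2)$ (all of whose non-$c_2$ chambers then have $\lt < \lt(c_1)$, again by the uniqueness statement in Lemma \ref{lem:flip-steep-descent}(\ref{lem:flip-steep-descent1})), reaching a chamber from which the descending gallery continues below $\lt(c_1)$, and then close up inside the connected $\Rt$ exactly as before. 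A careful bookkeeping of which of the three cases of Lemma \ref{lem:flip-steep-descent} is active on each of the two panels $P_i$, $P_j$ — there are only finitely many combinations — will complete the argument; I expect the write-up to be a short case check rather than anything computational.
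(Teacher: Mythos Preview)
Your core idea---descend from $c_0$ into $\Rt$ via direct descent, descend from $c_2$ into $\Rt$ via direct descent, and join the endpoints inside the connected $\Rt$---is exactly the paper's proof, and it works uniformly without any of the panel analysis or case distinctions you propose. You are making the argument much harder than necessary, in two respects.

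First, the entire opening panel analysis via Lemma~\ref{lem:flip-steep-descent} is not used in the paper's argument, and your claim there is in fact wrong: having an $s$-neighbour of strictly smaller numerical \thcod does \emph{not} force case~(\ref{lem:flip-steep-descent1}). In case~(\ref{lem:flip-steep-descent2}) one has $sw\theta(s)=w$, so $sw=w\theta(s)$ is itself a $\theta$-twisted involution of length $l(w)-1$ when $l(sw)<l(w)$, and it can occur as $\dt(c_0)$; the parallel-panel case is therefore also compatible with $\lt(c_0)<\lt(c_1)$. Fortunately none of this matters.

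Second, your ``genuinely delicate subcase'' $\lt(c_2)=\lt(c_1)$ is a non-issue: reread the conclusion of the lemma. It demands $\lt(d)<\lt(c_1)$ only for $d\in\gamma\setminus\{c_2\}$; the endpoint $c_2$ is explicitly exempt. Hence the paper simply takes a strictly descending gallery $\gamma_2$ from $c_2$ into $\Rt$ (direct descent), notes that every chamber of $\gamma_2\setminus\{c_2\}$ has $\lt<\lt(c_2)\le\lt(c_1)$, observes that $c_1\notin\Rt$ (since $\lt(c_0)<\lt(c_1)$) so that every chamber of $\Rt$ lies strictly below $\lt(c_1)$, and concatenates $\gamma_0\cdot\gamma_1\cdot\gamma_2^{-1}$. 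That is the whole proof; no case split, no Lemma~\ref{lem:flip-steep-descent}, four lines.
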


\begin{figure}[h]
\centering
\includegraphics{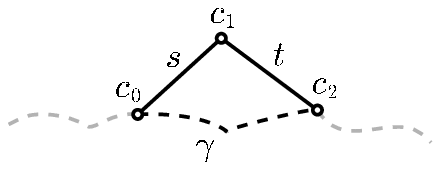}
\quad
\includegraphics{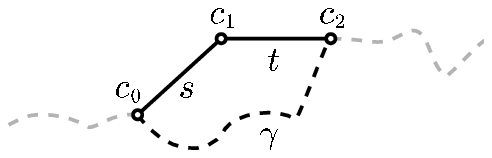}
\caption{A peak and a short plateau with bypasses in the \thcod of a gallery. Higher numerical \thcod is reflected by chambers being depicted farther upwards.}
\label{fig:peak-plateau}
\end{figure}

\begin{proof}
By hypothesis there exists a gallery $\gamma_0 \subseteq R$ from $c_0$ to some chamber $c'_0 \in \Rt$ such that $\lt$ strictly decreases along that gallery. In particular, $\lt(d) \leq \lt(c_0) < \lt(c_1)$ for each $d \in \gamma_0$. Likewise we find a gallery $\gamma_2 \subseteq R$ from $c_2$ to some chamber $c'_2\in\Rt$ on which $\lt$ strictly decreases. Hence, $\lt(d) < \lt(c_2) \leq \lt(c_1)$ for all $d \in \gamma_2 \backslash \{ c_2 \}$. 
Since $\Rt$ is connected by hypothesis, there exists a gallery $\gamma_1$ in $\Rt$ from $c'_0$ to $c'_2$. As $c_1 \not\in \Rt$ by hypothesis, we conclude that $\gamma=\gamma_0 \gamma_1 \gamma_2^{-1}$ is a gallery as required.
\end{proof}

Repeated application of Lemma \ref{lem:rank2-bypass} yields the following structure result. (Recall the concept of inherited connectedness introduced in the part on chamber systems of Section \ref{basics}.) 

\begin{prop} \label{prop:good-rank2-implies-all-good} \label{prop:good-rank2-implies-hom-inhConn}
Let $\theta$ be a quasi-flip of a twin building $\mcC$ such that each rank two residue $R$ admits direct descent into a connected $\Rt$. Then the following hold:
\begin{enumerate}
\item\label{prop:good-rank2-implies-all-good1} Each residue $Q$ of $\mcC_\eps$ admits direct descent into $\Qt$. Moreover, $\Qt$ inherits connectedness from $Q$ and, in particular, is connected.
\item\label{prop:good-rank2-implies-all-good2} The pair $(\mcC, \theta)$ is homogeneous and $\Ct$ inherits connectedness from $\mcC_\eps$.
\end{enumerate}
\end{prop}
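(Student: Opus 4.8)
The plan is to bootstrap from the rank two hypothesis to all residues by induction on the rank, using Lemma \ref{lem:rank2-bypass} as the single combinatorial engine. The two assertions in (\ref{prop:good-rank2-implies-all-good1}) — direct descent and inherited connectedness of $\Qt$ — should be proved simultaneously, since the inductive step for connectedness needs direct descent in lower rank residues and vice versa.

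First I would set up the induction on $|J|$, where $J$ is the type of the residue $Q$. The base cases $|J| \leq 2$ are the hypothesis (for $|J|=0,1$ the statement is trivial or immediate: a panel is a rank one residue, and for rank one the flip-flop system is just the set of chambers of minimal numerical \thcod, into which direct descent is vacuous after one step; connectedness is inherited trivially). For the inductive step, fix $Q = R_J(c)$ with $|J| \geq 3$ and assume the proposition holds for all residues of rank $< |J|$.

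\textbf{Direct descent into $\Qt$.} Given any chamber $c \in Q$, I want to produce a gallery in $Q$ from $c$ to some chamber of $\Qt$ along which $\lt$ strictly decreases. The idea is: pick a chamber $c_{\min} \in Q$ realizing the minimal numerical \thcod of $Q$, and pick any gallery $\gamma$ from $c$ to $c_{\min}$ in $Q$. This $\gamma$ need not have strictly decreasing \thcod — it may have peaks and plateaus. I would then apply Lemma \ref{lem:rank2-bypass} repeatedly to smooth out peaks: whenever $\gamma$ contains three consecutive chambers $(c_{i-1},c_i,c_{i+1})$ with $\lt(c_{i-1}) < \lt(c_i) \geq \lt(c_{i+1})$ — a peak or short plateau — both $c_{i-1},c_i$ and $c_i,c_{i+1}$ lie in a common rank two residue $R$ (the residue of type $\{j,k\}$ where $j,k$ are the two types of the relevant adjacencies; if the two adjacencies have the same type this is a panel and the argument degenerates favorably). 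By the hypothesis, $R$ admits direct descent into a connected $\Rt$, so Lemma \ref{lem:rank2-bypass} replaces the sub-gallery through $c_i$ by one that stays strictly below $\lt(c_i)$. Each such replacement strictly lowers the maximum value of $\lt$ along the gallery (or, at fixed maximum, strictly decreases the number of chambers attaining it), so after finitely many steps the gallery is \emph{unimodal}: $\lt$ is non-increasing, then... no — actually the terminal value $\lt(c_{\min})$ is the global minimum of $Q$, so a gallery with no peak and no short plateau and ending at the minimum is necessarily weakly decreasing; one more pass removing plateaus (again via Lemma \ref{lem:rank2-bypass}, a plateau $\lt(c_{i-1}) = \lt(c_i) = \cdots$ followed eventually by a descent can be bypassed) gives a strictly decreasing gallery. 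Since $\lt(c_{\min})$ is the minimum over $Q$, its endpoint lies in $\Qt$, and we may even arrange the strict decrease all the way; this is direct descent.

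\textbf{Inherited connectedness of $\Qt$.} Suppose $c,d \in \Qt$ are connected by a gallery $\gamma$ in $Q$; I must produce a gallery in $\Qt$ joining them. Apply direct descent repeatedly, or rather: take $\gamma \subseteq Q$ joining $c$ to $d$, both of which already have $\lt$ equal to the minimum of $Q$. Every intermediate chamber has $\lt$ at least this minimum, so $\gamma$ is a path that starts and ends at the minimum value. Now use Lemma \ref{lem:rank2-bypass} to flatten all peaks and plateaus to the minimum level: every time $\gamma$ rises above the minimum, the first chamber strictly above the minimum is part of a peak/short-plateau $(c_{i-1},c_i,\dots)$ with $\lt(c_{i-1})$ at the minimum; the rank two residue containing that excursion admits direct descent into a connected flip-flop system (by hypothesis in rank two, and the induced flip-flop system of that rank two residue consists of chambers at \emph{its own} minimum, which by convexity/projection arguments lies at the global $Q$-minimum — this needs a small check), and Lemma \ref{lem:rank2-bypass} bypasses the excursion by a gallery staying strictly below $\lt(c_i)$. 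Iterating drives the whole gallery down to the constant minimum level, i.e.\ into $\Qt$. The same smoothing, applied \emph{within $\mcC_\eps$} instead of within a residue $Q$ (taking $Q = \mcC_\eps$, which is allowed since $\mcC_\eps$ is itself a residue of type $S$), gives part (\ref{prop:good-rank2-implies-all-good2}): $\Ct$ inherits connectedness from $\mcC_\eps$.

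\textbf{Homogeneity.} For (\ref{prop:good-rank2-implies-all-good2}) I also need that $(\mcC,\theta)$ is homogeneous, i.e.\ that all minimal Phan residues have the same type $K$. By Proposition \ref{lem:min-phanres-sph}(\ref{lem:min-phanres-sph3}) each minimal Phan residue is spherical and the \thcod is constant on it with value $w_I$ where $I$ is its type. By Remark \ref{lem:K-hom-then-Ct-union} every chamber of $\Ct$ lies in a minimal Phan residue, and conversely I would argue that the constant value $w_I$ on a minimal Phan residue $R_I$ must satisfy $l(w_I) = \lt(c)$ for $c \in R_I$, so that $R_I \cap \Ct \neq \emptyset$ forces $R_I \subseteq \Ct$ and realizes the global minimum numerical \thcod; hence all minimal Phan residues meet the connected set $\Ct$. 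Connectedness of $\Ct$ (just proved) together with the fact that two distinct minimal Phan residues through adjacent chambers of $\Ct$ must have the same type — because moving along a panel inside $\Ct$ keeps $\lt$ constant, and Lemma \ref{lem:equal-thcod} then keeps the \thcod constant, so the minimal Phan residue through each is $R_I(\text{that chamber})$ with the \emph{same} $I$ by Proposition \ref{lem:min-phanres-thcod-const}(\ref{lem:min-phanres-thcod-const3}) — propagates a single type $K$ across all of $\Ct$, hence across all minimal Phan residues.

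\textbf{Main obstacle.} The delicate point is the bookkeeping in the smoothing procedure: one must choose a well-founded measure on galleries (lexicographically: maximum value of $\lt$ attained, then the number of chambers attaining the current maximum, then total length, or similar) and verify that each application of Lemma \ref{lem:rank2-bypass} strictly decreases it, so that the process terminates. A subtler issue is handling \emph{long plateaus} — stretches where $\lt$ is constant and above the target minimum — since Lemma \ref{lem:rank2-bypass} as stated handles a \emph{short} plateau $(c_0,c_1,c_2)$ with $\lt(c_0) < \lt(c_1) = \lt(c_2)$ only when followed by a descent $\lt(c_1) \geq \lt(c_2)$; a genuinely flat stretch with no nearby descent cannot occur once we know direct descent holds in the ambient residue, because from any chamber of the plateau there is a descending gallery, and its first step leaves the plateau downward, creating the $\lt(c_1) \geq \lt(c_2)$ configuration locally. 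Making this interplay precise — that direct descent in rank $<|J|$ subresidues forbids "trapped" plateaus — is where the induction hypothesis is genuinely used, and is the part I would write out most carefully.
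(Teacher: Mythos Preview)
Your approach to part (\ref{prop:good-rank2-implies-all-good1}) is essentially that of the paper: smooth out peaks via Lemma \ref{lem:rank2-bypass}, with the lexicographic measure (maximum of $\lt$ along $\gamma$, then the number of chambers attaining that maximum) guaranteeing termination. However, the induction on the rank of $Q$ that you set up is unnecessary and in fact never used: any three consecutive chambers of a gallery lie in a common rank-two residue, so the hypothesis of the proposition applies directly via Lemma \ref{lem:rank2-bypass} at every peak, regardless of the rank of $Q$. The paper runs a direct double induction on $(m,|X_\gamma|)$ with no reference to the rank. Your worry about the ``small check'' that the local rank-two minimum coincides with the global $Q$-minimum is also a red herring: Lemma \ref{lem:rank2-bypass} only promises a bypass staying strictly below the peak height $\lt(c_1)$, not at any global minimum, and that is all the smoothing procedure requires.

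There is, however, a genuine gap in your homogeneity argument for (\ref{prop:good-rank2-implies-all-good2}). You correctly observe that $\Ct$ is connected, that by Lemma \ref{lem:equal-thcod} the $\theta$-codistance is therefore constant on it (say $w_K$), and that any minimal Phan residue \emph{meeting} $\Ct$ is then entirely contained in $\Ct$ and has type $K$. But you have not shown that \emph{every} minimal Phan residue meets $\Ct$: a priori there could exist a minimal Phan residue $R$ of some type $I\neq K$, with constant $\theta$-codistance $w_I$ satisfying $l(w_I)>l(w_K)$, disjoint from $\Ct$. Your sentence ``hence all minimal Phan residues meet the connected set $\Ct$'' is a non sequitur, and the subsequent propagation-along-$\Ct$ argument only constrains the minimal Phan residues that already lie inside $\Ct$.

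The missing idea is a trapping argument. Take any chamber $c$ in a minimal Phan residue $R$ of type $I$; by part (\ref{prop:good-rank2-implies-all-good1}) applied to $Q=\mcC_\eps$, there is a gallery from $c$ into $\Ct$ along which $\lt$ strictly decreases. This gallery cannot leave $R$: at each step $c_k\sim_s c_{k+1}$ with $\lt(c_{k+1})<\lt(c_k)$ and $\dt(c_k)\in W_I$, Lemma \ref{lem:flip-steep-descent} forces $l(s\,\dt(c_k))<l(\dt(c_k))$, so $s$ is a left descent of an element of $W_I$ and hence $s\in I$ (equivalently, $w'\leq w_I$ in Bruhat order implies $w'\in W_I$). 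Thus every step stays in $R$, the endpoint in $\Ct$ lies in $R$, and therefore $R\subseteq\Ct$ and $I=K$. This is precisely the paper's argument, and it is what your proof is missing.
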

\begin{proof}
(\ref{prop:good-rank2-implies-all-good1}) Let $c\in Q$, let $d\in \Qt$, and let $\gamma=(c=c_0,c_1,\ldots,c_n=d)\subseteq Q$ be a gallery from $c$ to $d$. 
Define $m:=\mathrm{max}\{ \lt(x) \mid x \in \gamma\}$ and $X_\gamma:= \{ x \in \gamma \mid \lt(x) = m\}$.

If $c = c_0 \not\in X_\gamma$, let $i$ be minimal with the property that $c_i \in X_\gamma$ and consider the gallery $(c_{i-1},c_i,c_{i+1})$. Lemma \ref{lem:rank2-bypass} implies the existence of a gallery $\widehat{\gamma}\subseteq Q$ from $c_{i-1}$ to $c_{i+1}$
such that $\widehat{\gamma} \cap X_\gamma \subseteq \{ c_{i+1} \}$. Substituting $\widehat{\gamma}$ for the subgallery $(c_{i-1},c_i,c_{i+1})$ in $\gamma$ hence decreases the cardinality of $X_\gamma$. Therefore induction on the cardinality of $X_\gamma$ yields a gallery $\gamma'\subseteq Q$ from $c$ to $d$ satisfying $\mathrm{max}\{ \lt(x) \mid x \in \gamma' \} < m$. 
By an induction on $m$ we can transform $\gamma'$ into a gallery $\gamma''\subseteq Q$ from $c$ to $d$ with $c=c_0 \in X_{\gamma''}$. 

Therefore we may assume that $c\in X_\gamma$, which implies that all chambers in $\gamma$ have numerical \thcod at most $\lt(c)$. In particular, if $c \in \Qt$, then $\gamma\subset \Qt$, so that we have shown that $\Qt$ inherits connectedness from $Q$.
In order to establish direct descent we now proceed by an induction on $\lt(c)$, for which the case $c \in \Qt$, which is equivalent to $\lt(c) = \lt(d)$, serves as a basis.
Let $\lt(c) > \lt(d)$ and assume that, for $X := \{ x \in Q \mid \lt(x) < \lt(c) \}$, the residue $Q$ admits direct descent from $X$ into $\Qt$.
As $\lt(c)>\lt(d)$, the chamber $c_n=d$ is not contained in $X_\gamma$, so that there exists a minimal $i$ such that $c_i\in X_\gamma$ and $c_{i+1}\notin X_\gamma$. 
If $i>0$, we can apply Lemma \ref{lem:rank2-bypass} to the gallery $(c_{i+1},c_{i},c_{i-1})$ to obtain a gallery $\widehat{\gamma}$ from $c_{i+1}$ to $c_{i-1}$ which satisfies $\lt(x) < \lt(c_i) = \lt(c_0)$ for all $x \in \widehat{\gamma} \backslash \{ c_{i-1} \}$. Therefore induction on $i$ yields a gallery $\gamma'$ in which $\lt(c_0) > \lt(c_1)$. 
Since $c_1 \in X$, statement (\ref{prop:good-rank2-implies-all-good1}) follows.

(\ref{prop:good-rank2-implies-all-good2}) By (\ref{prop:good-rank2-implies-all-good1}) the chamber system $\Ct$ inherits connectedness from $\mcC_\eps$. Hence $\Ct$ is connected and so, by Lemma \ref{lem:equal-thcod}, the \thcod on $\Ct$ is constant and equal to some $w\in W$.
  By Proposition~\ref{prop:cox-twisted-inv}, there exists a spherical $K\subseteq S$ such that $w=w_K$, the longest word of $(\langle K \rangle, K)$.

If $R$ is an arbitrary minimal Phan residue of type $I$, then,  by Proposition \ref{lem:min-phanres-thcod-const}(\ref{lem:min-phanres-thcod-const3}), $I$ is spherical and the \thcod on $R$ is constant and equal to $w_I$. Since $\mcC_\eps$ admits direct descent from $R$ to $\Ct$ by (\ref{prop:good-rank2-implies-all-good1}) and since for each $w' \in W$ the fact $w' \leq w_I$ implies $w' \in W_I$, each path descending from any $c \in R$ into $\Ct$ is fully contained in $R$. Therefore $R\subset \Ct$ and $I=K$.
\end{proof}

This proposition turns out to be a very useful tool thanks to the following result on involutions of Moufang polygons. For the definition of unique $2$-divisibility we refer to Definition~\ref{2d}.

\begin{theorem}[Horn, Van Maldeghem] \label{hvm}
Let $(W,S)$ be a Coxeter system of rank two, let $\Phi$ be its root system, let
$\mathcal{C}$ be a Moufang polygon of type $(W,S)$ with uniquely $2$-divisible root groups $\{U_\alpha\}_{\alpha \in \Phi}$, and let $\theta$ be an involutive permutation of $\mathcal{C}$ that preserves adjacency and opposition.

If
\begin{itemize}
\item $\mathcal{C}$ is split and for each $\alpha \in \Phi$ one has $|U_\alpha| \geq 5$, {or}
\item $\mathcal{C}$ is finite and for each $\alpha \in \Phi$ one has $|U_\alpha| \geq 5$,
\end{itemize}
then $\mathcal{C}$ admits direct descent into a connected $\mathcal{C}^\theta$.
\end{theorem}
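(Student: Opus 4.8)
The plan is to reduce the statement to a rank-one problem about involutions of Moufang sets and then to settle that problem using the classification of Moufang polygons together with explicit coordinate computations. First I would translate $\theta$ into group-theoretic data: by Proposition~\ref{prop:BN-quasi-flip-Weyl-auto} and Remark~\ref{thm:bn-is-building-flip}, $\theta$ corresponds to an involutory automorphism of the little projective group of $\mcC$ interchanging the two classes of Borel subgroups, equivalently to an involutory automorphism or polarity $\theta_{\mathrm{isom}}$ of the polygon (Remark~\ref{rmk:spherical-flips}). In the split case this is an involution of a split rank-two group over $\F$ of type $A_2$, $B_2=C_2$ or $G_2$, and in the finite case of a finite rank-two group of Lie type from a short list; unique $2$-divisibility of the root groups (resp.\ their odd order) forces odd characteristic, so $\theta$ is ``semisimple''. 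I would split according to whether the induced automorphism of the Coxeter diagram is trivial (a flip in the sense of Definition~\ref{def:BN-quasi-flip}) or the nontrivial one (so $\theta$ is a polarity), as the bookkeeping differs although the overall argument is parallel.

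For the rank-one reduction I would use the building-theoretic content of Section~\ref{sec:strong-flips}: by Lemma~\ref{lem:flip-steep-descent}, along any panel $P$ with $l(sw\theta(s))\neq l(w)$ the numerical \thcod\ either strictly drops (allowing immediate descent) or has a unique local maximum, so the only genuine obstruction is a panel $P$ whose image $\theta(P)$ is parallel to $P$, along which $\lt$ has a plateau. By Remark~\ref{localisom}, on such a parallel pair $\theta$ restricts to an involutory almost isometry of the rank-one building $P$, i.e.\ to an automorphism $\sigma$ of the Moufang set $\mathbb{M}_P$ attached to the corresponding rank-one group $X_\alpha$ --- this is $(\mathrm{P})\mathrm{SL}_2(\F)$ in the split case and, by the Tits--Weiss classification, one of a short list in the finite case. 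Descending out of the plateau amounts, after the computation in the proofs of Lemma~\ref{lem:equal-thcod} and Lemma~\ref{lem:flip-steep-descent}(\ref{lem:flip-steep-descent2}), to producing a point of $\mathbb{M}_P$ not fixed by $\sigma$, and $\Ct\cap P$ is exactly the set of such moved points. The rank-one lemma I would prove is therefore: \emph{an involutory automorphism $\sigma$ of $\mathbb{M}_P$ either fixes the underlying set pointwise, or its set of moved points is nonempty and connected} (in fact it is everything, or everything but one point); here $|U_\alpha|\geq 5$ excludes small pathologies and unique $2$-divisibility pins down $\Fix(\sigma)$ to that of a diagonalisable involution. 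The degenerate alternative --- that $\theta$ stabilises an entire parallel class of panels pointwise, i.e.\ $\theta$ is non-strong in the sense of Definition~\ref{non-strong} --- is then treated separately: there no local descent in that panel direction is possible, and one descends using the complementary direction, exploiting that the polygon has bounded diameter so that a peak bypass in the style of Lemma~\ref{lem:rank2-bypass} terminates.

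With the rank-one lemma available, a peak-and-plateau removal argument --- run inside the single polygon, in the spirit of Proposition~\ref{prop:good-rank2-implies-all-good} but now also coping with non-strong plateaus --- shows that from every chamber one reaches $\Ct$ along a strictly $\lt$-decreasing gallery, the induction being on the finitely many values $0,1,\dots$ taken by $\lt$. For connectedness of $\Ct$ I would first identify it: if $\theta$ is proper it is the set of Phan chambers, and otherwise, by Proposition~\ref{lem:min-phanres-thcod-const}(\ref{lem:min-phanres-thcod-const3}) together with Remark~\ref{lem:K-hom-then-Ct-union}, it is the union of the minimal Phan residues of a single type, hence a union of panels of one kind (or, in the extreme opposition-like case, the whole polygon). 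In each case connectedness reduces, via the direct-descent machinery just set up, to nonemptiness of $\Ct$ plus the statement that any two minimal configurations can be joined while $\lt$ stays minimal --- which again comes down to the rank-one picture (the moved-point set of $\sigma$ in each relevant panel is connected) together with Moufang transitivity of the root groups.

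I expect the main obstacle to be precisely the classification-dependent core: proving the rank-one lemma uniformly across all split and all finite Moufang polygons requires knowing exactly which Moufang sets arise as residues of $\mcC$ and computing, in coordinates, the fixed-point set of every involutory automorphism of each of them under the $|U_\alpha|\geq 5$ and unique-$2$-divisibility hypotheses; interwoven with this is the delicate case distinction needed to isolate and separately handle the non-strong (stuck-plateau) configurations. This is exactly where the caveat in the introduction --- that no formal proof is available for arbitrary Moufang polygons with large uniquely $2$-divisible root groups --- is forced upon us.
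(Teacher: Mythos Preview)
The paper does not contain a proof of this theorem. Immediately after the statement, the authors write that the case of projective planes and classical quadrangles follows from \cite[Theorem~4.7.1]{Horn:2008b}, and that the general situation has been proved by Van Maldeghem and the second author and ``will become publicly available in the near future''. So there is no argument in the present paper to compare your proposal against.

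That said, your sketch is consistent with what the paper signals about the nature of the proof: a case-by-case analysis relying on the Tits--Weiss classification of Moufang polygons, reduction to rank-one behaviour on parallel panels via Lemma~\ref{lem:flip-steep-descent} and Remark~\ref{localisom}, and explicit coordinate computations to control the fixed/moved points of the induced Moufang-set involution. Your own caveat at the end --- that the classification-dependent core is the real work and is precisely where the paper's ``no formal proof available for arbitrary Moufang polygons'' remark bites --- is accurate. One technical point to watch: your rank-one lemma as stated (``the set of moved points is connected'') is vacuous for a rank-one building, since any two chambers of a panel are adjacent; what you actually need, and what your descent argument uses, is that $\sigma$ has at most one fixed point unless it is the identity, so that $\Ct\cap P$ is either all of $P$ or $P$ minus one chamber. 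You should also be careful that Proposition~\ref{prop:good-rank2-implies-all-good} in the paper \emph{assumes} the rank-two conclusion you are trying to prove, so invoking it ``in the spirit of'' is fine for intuition but cannot be part of the actual argument here.
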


In the case that $\mathcal{C}$ is a projective plane or a classical quadrangle this theorem follows from \cite[Theorem~4.7.1]{Horn:2008b}. The general situation has been proved by Hendrik Van Maldeghem and the second author, and will become publicly available in the near future. 
 
\begin{example}
Let  $V=\F^{2n}$ endowed with an alternating bilinear form $(\cdot,\cdot)$. Then, as in Example~\ref{exmpl:proj-space-form-flip}, the map $\mathbb{P}(V) \to \mathbb{P}(V) : U \mapsto \bigcap_{u \in U} \mathrm{ker}(u,\cdot)$ is a flip. However, unlike the situation of Example~\ref{exmpl:proj-space-form-flip}, this flip is $\{ s_1, s_3, \ldots, s_{2n-1}\}$-homogenous rather than $\emptyset$-homogeneous.
\end{example}

$K$-homogeneity of $\Ct$ allows one to factor out minimal Phan residues by passing to its $K$-residue chamber system (cf.\ the part on chamber systems in Section \ref{basics}).

\begin{prop} \label{prop:hom-inhConn-imply-resConn} \label{thm:nice-flips-are-geometric}
Let $\mcC$ be a twin building and let $\theta$ be a quasi-flip of $\mcC$.
If $(\mcC, \theta)$ is $K$-homogeneous and if $\Ct$ inherits connectedness from $\mcC_\eps$,
then the $K$-residue chamber system $\Ct_K$ is residually connected.
\end{prop}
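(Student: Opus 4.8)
The plan is to set up a dictionary between the flip-flop system and Phan residues, and then deduce residual connectedness of $\Ct_K$ from residual connectedness of the building $\mcC_\eps$ together with the fact (Proposition~\ref{new}(\ref{new2})) that Phan residues are closed under intersection.

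First I would record the shape of $\Ct_K$. Since $\Ct$ inherits connectedness from $\mcC_\eps$ and, by Remark~\ref{lem:K-hom-then-Ct-union} and $K$-homogeneity, is the union of the minimal Phan residues — each of which has type $K$ and constant \thcod $w_K$ by Proposition~\ref{new}(\ref{new3}) — the chambers of $\Ct_K$ are precisely the residues $R_K(c)$ with $\dt(c)=w_K$. Writing $I':=S\setminus K$, an $(I'\setminus J)$-residue of $\Ct_K$ is by definition an $(S\setminus J)$-residue of $\Ct$, and by inherited connectedness it equals $R^{\mcC_\eps}_{S\setminus J}(c)\cap\Ct$ for a suitable $c\in\Ct$; conversely every non-empty set of this form is such a residue. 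The key lemma I would then prove is: \emph{for a residue $R$ of $\mcC_\eps$ of type $I\supseteq K$, one has $R\cap\Ct\neq\emptyset$ if and only if $R$ is a Phan residue.} Indeed, if $c\in R\cap\Ct$ then $\dt(c)=w_K\in W_K\subseteq W_I$, so $R=R_I(c)$ is a Phan residue by Proposition~\ref{new}(\ref{new1}); and conversely a Phan residue $R$ contains a spherical, hence a minimal, Phan residue (as produced in the proof of Proposition~\ref{new}(\ref{new3})), which by $K$-homogeneity has type $K$ and is therefore contained in $\Ct$, so $R\cap\Ct\neq\emptyset$. This equivalence is where $K$-homogeneity is used essentially.

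With this in hand I would finish as follows. Let $J\subseteq I'$ and let $(\mc{R}_j)_{j\in J}$ be residues of $\Ct_K$ of type $I'\setminus\{j\}$ with pairwise non-trivial intersection; write $\mc{R}_j=\hat R_j\cap\Ct$ with $\hat R_j:=R^{\mcC_\eps}_{S\setminus\{j\}}(c_j)$. A common chamber of $\mc{R}_j$ and $\mc{R}_{j'}$ is a minimal Phan residue whose type $K$ lies in $(S\setminus\{j\})\cap(S\setminus\{j'\})$, so it is contained in $\hat R_j\cap\hat R_{j'}$, and conversely a chamber of $\hat R_j\cap\hat R_{j'}\cap\Ct$ generates such a common chamber; thus $\hat R_j\cap\hat R_{j'}\cap\Ct\neq\emptyset$ for all $j,j'$. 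In particular every $\hat R_j$ meets $\Ct$, so by the key lemma every $\hat R_j$ is a Phan residue, and the $\hat R_j$ pairwise intersect. Residual connectedness of the building $\mcC_\eps$ then makes $Q:=\bigcap_{j\in J}\hat R_j$ a non-empty $(S\setminus J)$-residue; ordering $J$ and intersecting one factor at a time, each partial intersection contains $Q$ and is the intersection of two Phan residues, so Proposition~\ref{new}(\ref{new2}) shows $Q$ is a Phan residue, whence $Q\cap\Ct\neq\emptyset$ by the key lemma. Finally $\bigcap_{j\in J}\mc{R}_j=Q\cap\Ct$, which by the first step is an $(I'\setminus J)$-residue of $\Ct_K$ — in particular a non-empty, connected, induced chamber system over $I'\setminus J$ — giving residual connectedness of $\Ct_K$.

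The step I expect to be the main obstacle is the key lemma, specifically the converse implication: making sure that $K$-homogeneity really does force every Phan residue to meet $\Ct$, via the chain ``every Phan residue contains a minimal one, which by homogeneity has type $K$, hence lies in $\Ct$''. The remaining inputs — residual connectedness of the building $\mcC_\eps$ (a standard fact about buildings, see \cite{Abramenko/Brown:2008}), closure of Phan residues under intersection, and the translation between $\Ct$, $\Ct_K$ and $\mcC_\eps$ afforded by inherited connectedness — should be routine.
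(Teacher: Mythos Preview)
Your proposal is correct and follows essentially the same approach as the paper: pass from residues $\mc{R}_j$ of $\Ct_K$ to the ambient $(S\setminus\{j\})$-residues $\hat R_j$ in $\mcC_\eps$, observe these are Phan residues, intersect them using residual connectedness of $\mcC_\eps$ and Proposition~\ref{new}(\ref{new2}), then come back to $\Ct$ via $K$-homogeneity and Remark~\ref{lem:K-hom-then-Ct-union}, with inherited connectedness supplying connectedness of the intersection. The only difference is presentational: you isolate the equivalence ``$R\cap\Ct\neq\emptyset \Leftrightarrow R$ is a Phan residue'' as a key lemma, whereas the paper uses its two directions inline, and you spell out the inductive application of Proposition~\ref{new}(\ref{new2}) and the verification of pairwise intersections of the $\hat R_j$, which the paper leaves implicit.
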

\begin{proof}
Let $I=S\setminus{K}$ denote the type set of $\Ct_K$.
Let $J \subseteq I$ and, for each $j \in J$, let $R_j$ be a residue of $\Ct_K$ of type $I \backslash \{ j \}$, i.e., a residue of $\Ct$ of type $S \backslash \{ j \}$. We have to prove that $R_J := \bigcap_{j \in J} R_j$ is non-empty and connected. For each $j$ in $J$ define $\overline{R}_j$ to be the unique $(S\setminus\{j\})$-residue of $\mcC_\eps$ which contains $R_j$. Each of the $\overline{R}_j$ is a Phan residue by Proposition \ref{new}(\ref{new1}), because it contains a $K$-residue in $\mcC_\eps$ of a chamber in $\Ct$. 
Since $\mcC_\eps$ is residually connected, the intersection $\overline{R}_J:=\bigcap_{j\in J} \overline{R}_j$ is non-empty and connected. By Proposition \ref{lem:phanres-intersect}(\ref{lem:phanres-intersect2}) this intersection $\overline{R}_J$ is a Phan residue, which therefore contains a minimal Phan residue of type $K$. Thus, by Remark \ref{lem:K-hom-then-Ct-union} the intersection $\overline{R}_J \cap \Ct = (\bigcap_{j \in J} \overline{R}_j) \cap \Ct = \bigcap_{j \in J} R_j = R_J$ is non-empty.
For $c,d\in R_J\subseteq\overline{R}_J$ there exists an $(I\setminus J)$-gallery in $\mcC_\eps$ from $c$ to $d$. As $\Ct$ inherits connectedness from $\mcC_\eps$, there also exists an $(I\setminus J)$-gallery in $\Ct$ from $c$ to $d$, whence $R_J$ is connected.
\end{proof}

If $K=\emptyset$, then Proposition \ref{prop:hom-inhConn-imply-resConn} states that $\Ct$ itself is residually connected. 

\begin{remark}
By Lemma \ref{lem:strong-flip-descent} a strong quasi-flip is $\emptyset$-homogeneous and each residue admits direct descent regardless of the structure of $\mcC$. As a consequence the local-to-global connectivity and homogeneity result presented here for arbitrary quasi-flips---albeit with some restrictions to $\mcC$---can be considered a variation and generalization of the connectivity result \cite[Proposition 6.6]{Devillers/Muehlherr:2007}. A generalization of the simple connectivity result in \cite{Devillers/Muehlherr:2007} to non-strong quasi-flips, however, seems hard.
\end{remark}


\section{$\theta$-stability and double coset decompositions} \label{sec:$2$-divisible-rootgrps} \label{sec:double-coset-decomp} \label{sec:stable-twin-apartments}

Let $\mcC$ be a Moufang twin building and let $G$ be a strongly transitive group of automorphisms of $\mcC$. 
The Birkhoff decomposition yields a natural isomorphism of orbit spaces $B_- \backslash G / B_+ \cong W$. Geometrically, this decomposition follows from the fact that (1) the building $G/B_+$ is covered by the positive halves of the twin apartments containing $B_-$ and (2) the group $G$ acts strongly transitively on $\mathcal{C}$.
In this section we give a parametrization of the orbit space $\Gt \backslash G / B_+$ where $\theta$ is a quasi-flip of $\mcC$, based on the observation that under some mild condition each chamber $c$ is contained in a $\theta$-stable twin apartment of $\mcC$. 

\medskip
The condition we impose is that $\theta$ locally fixes opposite chambers in the following sense:

\begin{defn}
\begin{enumerate}
\item An involutive almost isometry $\theta$ of a spherical building $\mcC$ (cf.\ Remark \ref{rmk:spherical-flips}) is said to \Defn{fix opposite chambers}, if each $\theta$-fixed chamber of $\mcC$ admits an opposite $\theta$-fixed chamber.
\item\label{locallyfix} A quasi-flip $\theta$ of a twin building $\mcC$ is said to \Defn{locally fix opposite chambers}, if for each spherical residue $R$ of $\mcC$ that is parallel to $\theta(R)$ and whose type set consists of a single $\theta$-orbit the involutive almost isometry $\mathrm{proj}_R\circ\theta_{|R}$ (cf.\ Remark \ref{localisom}) fixes opposite chambers. \qedhere
\end{enumerate}
\end{defn}

\begin{remark}
Clearly, the residues $R$ in item (\ref{locallyfix}) of the preceding definition have rank one or two.
\end{remark}

The following observation is a variation on a classical theme, cf.\ \cite{Muehlherr:1994}.

\begin{prop} \label{prop:theta-stable-apt-sph}
Let $(G,\Uals,T)$ be a spherical RGD system, let $\mcC$ be the associated spherical building, and let $\theta$ be a quasi-flip of $G$ that locally fixes opposite chambers of $\mcC$. 
If $\theta$ fixes a chamber $c$ of $\mcC$, then there exists a $\theta$-fixed chamber of $\mcC$ opposite $c$.
\end{prop}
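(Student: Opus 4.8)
The plan is to induct on the rank of $\mcC$, using the hypothesis that $\theta$ locally fixes opposite chambers to handle the rank one and rank two base cases and a residue argument to carry out the inductive step. First I would dispose of the trivial case: if $\mcC$ has rank zero there is nothing to prove, and if $\mcC$ has rank one then $c$ is a $\theta$-fixed chamber of a rank one residue whose unique type set is a single $\theta$-orbit, so the involutive almost isometry $\mathrm{proj}_{\mcC}\circ\theta_{|\mcC}$ fixes opposite chambers by hypothesis; since $\mcC$ is parallel to $\theta(\mcC)$ trivially, this immediately yields a $\theta$-fixed chamber opposite $c$. The rank two case needs a little more care: if the type set $S$ of $\mcC$ forms a single $\theta$-orbit, the local hypothesis applies directly to $R=\mcC$; otherwise $\theta$ fixes each element of $S$, hence $\theta$ is a flip (not merely a quasi-flip), and one argues panel by panel. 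In the rank two flip situation, starting from the $\theta$-fixed chamber $c$ one descends: for each $s\in S$, Lemma~\ref{lem:flip-steep-descent} together with the fact that $\theta$ fixes opposite chambers on the relevant rank one residues lets one find, within a $\theta$-fixed spherical residue, a $\theta$-fixed chamber at strictly larger numerical $\theta$-codistance, until one reaches a $\theta$-fixed chamber opposite $c$ (codistance $w_S$).

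For the inductive step, suppose $\mathrm{rank}(\mcC) = n \geq 2$ and the statement holds for all spherical buildings of smaller rank (arising from spherical RGD systems with a quasi-flip locally fixing opposite chambers). Pick $s \in S$. The $s$-panel $P := P_s(c)$ is a rank one residue containing $c$; I first want to produce a $\theta$-fixed chamber $c'$ that is ``as far from $c$ as possible'' inside a $\theta$-stable residue, by iterating the rank two analysis above along the various directions in $S$. Concretely, let $R$ be a maximal $\theta$-stable spherical residue containing $c$ such that $\theta_{|R}$ (after composing with the projection) is a quasi-flip of the spherical twin building on $(R,\theta(R))$ that again locally fixes opposite chambers — this is exactly the content of Remark~\ref{localisom}. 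If $R = \mcC$, the corestricted quasi-flip has strictly smaller rank type set only when some generator is moved; if instead $R$ is a proper residue we apply the induction hypothesis to $R$ to find a $\theta$-fixed chamber of $R$ opposite $c$ within $R$, then reduce to showing we can push this ``oppositeness'' out to all of $\mcC$. The key mechanism throughout is: Lemma~\ref{lem:flip-steep-descent}(\ref{lem:flip-steep-descent3}) provides, in a panel $P$ with $l(sw\theta(s)) = l(w)+2$, a unique chamber $d$ of maximal $\theta$-codistance, and when the panel and its $\theta$-image are parallel with $\theta$ fixing opposite chambers there, this $d$ can be chosen $\theta$-fixed.

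The main obstacle I anticipate is bookkeeping the interplay between $\theta$ acting on the type set and the geometry: when $\theta$ is a genuine quasi-flip (not a flip), a single $s$-panel $P$ need not be parallel to $\theta(P)$, so the ``descend one panel at a time'' strategy must be replaced by descending in $\theta$-orbits of generators, i.e., working inside rank one or rank two residues whose type set is a single $\theta$-orbit — which is precisely why the hypothesis is phrased as it is. The delicate point is verifying that at each stage the residue one descends into is again parallel to its $\theta$-image (so that Remark~\ref{localisom} applies and the corestricted map is again a quasi-flip locally fixing opposite chambers); this should follow from Lemma~\ref{lem:flip-steep-descent}, since the ``new'' chamber $d$ there satisfies $\delta^*(d,\theta(d)) = s\,\delta^*(c,\theta(c))\,\theta(s)$ and an easy induction shows the $\theta$-codistance eventually reaches $w_S$, the longest element, at which point $d$ is opposite $\theta(d)$, i.e., $d$ is opposite a $\theta$-fixed chamber; but since the whole construction stayed inside the convex hull of $c$ and the final chamber and each intermediate chamber was $\theta$-fixed, running the construction with $c$ itself as target shows there is a $\theta$-fixed chamber opposite $c$. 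Finally I would remark that the hypothesis ``$\theta$ locally fixes opposite chambers'' is exactly what makes each local lift of a fixed chamber to a fixed opposite chamber possible, so no additional assumptions on $\mcC$ are needed beyond sphericity.
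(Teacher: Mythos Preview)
Your instinct to work inside residues whose type set is a single $\theta$-orbit on $S$ is correct, and is exactly what the hypothesis is tailored to. But the execution has two genuine gaps.

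First, the $\theta$-codistance bookkeeping is inverted. In the spherical picture (Remark~\ref{rmk:spherical-flips}), a chamber $d$ is $\theta$-fixed precisely when $\dt(d)=w_S$, whereas $d$ is opposite $\theta(d)$ precisely when $\dt(d)=1_W$. Since your starting chamber $c$ is $\theta$-fixed, $\lt(c)$ is already maximal; there is nothing to increase, and Lemma~\ref{lem:flip-steep-descent}(\ref{lem:flip-steep-descent3}) (the case $l(sw\theta(s))=l(w)+2$) can never fire from such a chamber. What you actually need to grow is the ordinary Weyl distance $\delta(c,\cdot)$ while keeping each intermediate chamber $\theta$-fixed, and that is not what Lemma~\ref{lem:flip-steep-descent} controls. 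Your sentence ``the $\theta$-codistance eventually reaches $w_S$, at which point $d$ is opposite $\theta(d)$'' conflates these two incompatible conditions.

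Second, the induction on rank does not close. If you find a $\theta$-fixed chamber $c'$ opposite $c$ inside a proper $\theta$-stable residue $R_J(c)$, you only obtain $\delta(c,c')=w_J$, and the remaining factor $w_J^{-1}w_S$ need not lie in any proper parabolic, so the induction hypothesis gives no further leverage. Already in type $A_2$ with $\theta$ trivial on $S$ one has $w_{\{s\}}w_{\{t\}}=st\neq sts=w_S$, so a single pass through a partition of $S$ into $\theta$-orbits is insufficient.

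The paper's argument replaces both steps with a single structural input: by \cite[1.32]{Steinberg:1968b}, the longest element $w_S$ is expressible as a word (with repetitions) in the longest elements $w_I$, $I$ ranging over the set $\mc{S}$ of $\theta$-orbits on $S$. One then simply iterates the local hypothesis along this word: from any $\theta$-fixed chamber and any $I\in\mc{S}$, the residue $R_I$ through that chamber contains a $\theta$-fixed chamber opposite it within $R_I$; stringing these moves together according to the expression for $w_S$ produces a $\theta$-fixed chamber at distance $w_S$ from $c$. No rank induction and no $\theta$-codistance machinery are needed.
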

\begin{proof}
By Proposition \ref{prop:BN-quasi-flip-Weyl-auto}(\ref{prop:BN-quasi-flip-Weyl-auto2}) the quasi-flip $\theta$ induces an automorphism of $(W,S)$ of order at most $2$. Denote by $\mc{S}$ the set of  $\theta$-orbits in $S$. By hypothesis for each $I\in\mc{S}$ the residue $R_I(c)$ contains a chamber $c_I$ fixed by $\theta$ and opposite $c$ in that residue. Again by hypothesis for each $I' \in \mc{S}$ the residue $R_{I'}(c_I)$ contains a chamber $c_{I,I'}$ fixed by $\theta$ and opposite $c_I$ in that residue.
By \cite[1.32]{Steinberg:1968b} the longest word $w_S$ of $W$ equals a word in the longest words $w_I \in \langle I \rangle$, $I\in\mc{S}$. Therefore the claim follows by induction.
\end{proof}

We now prove that the twin building $\mcC$ can be covered with $\theta$-stable twin apartments.

\begin{prop} \label{thm:theta-stable-apt}
Let $(G,\Uals,T)$ be an RGD system, let $\mcC$ be the associated twin building, and let $\theta$ be a quasi-flip of $G$ that locally fixes opposite chambers of $\mcC$. Then any chamber $c$ is contained in a $\theta$-stable twin apartment.
\end{prop}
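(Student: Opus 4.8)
The plan is to reduce the problem to the case where the $\theta$-codistance $\dt(c)$ is the longest element $w_I$ of a spherical, $\theta$-stable standard parabolic subgroup $W_I\le W$, and then to build the twin apartment inside the spherical residue $R:=R_I(c)$ by invoking the spherical result, Proposition~\ref{prop:theta-stable-apt-sph}. Say $c\in\mcC_\eps$. Since $\dt(c)=\delta^*(c,\theta(c))$ is a $\theta$-twisted involution, Proposition~\ref{prop:cox-twisted-inv} lets me write $\dt(c)=s_1\cdots s_h\,w_I\,\theta(s_h)\cdots\theta(s_1)$ with $I\subseteq S$ spherical and $\theta$-stable and $l(\dt(c))=l(w_I)+2h$; setting $u:=s_1\cdots s_h$ (so $l(u)=h$ and $\dt(c)=u\,w_I\,\theta(u)^{-1}$) and choosing any chamber $c'\in\mcC_\eps$ with $\delta_\eps(c,c')=u$, Lemma~\ref{lem:pretty-cool-descent}(\ref{lem:pretty-cool-descent1}) yields $\dt(c')=u^{-1}\,\dt(c)\,\theta(u)=w_I$, and moreover the convex hull of $c'$ and $\theta(c')$ contains $c$. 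As twin apartments are convex \cite{Abramenko/Brown:2008} and a $\theta$-stable twin apartment containing $c'$ automatically contains $\theta(c')$, any $\theta$-stable twin apartment through $c'$ contains $c$ as well. Replacing $c$ by $c'$, I may thus assume $\dt(c)=w_I$; the subcase $I=\emptyset$ is immediate, since then $c$ is opposite $\theta(c)$ and the unique twin apartment $\Sigma$ spanned by that pair satisfies $\theta(\Sigma)=\Sigma$.

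So assume $\emptyset\ne I$ spherical and $\theta$-stable with $\dt(c)=w_I$, and put $R:=R_I(c)\subseteq\mcC_\eps$, so $\theta(R)=R_I(\theta(c))\subseteq\mcC_{-\eps}$. By Proposition~\ref{new}(\ref{new1}) both $R$ and $\theta(R)$ are Phan residues, hence opposite and therefore parallel \cite{Abramenko/Brown:2008}; moreover, by \cite[Lemma~5.140(1)]{Abramenko/Brown:2008} all codistances $\delta^*(x,\theta(c))$ with $x\in R$ lie in $W_I$, so $\delta^*(c,\theta(c))=w_I$, being the longest element of $W_I$, has maximal length among them. By \cite[Lemma~5.149]{Abramenko/Brown:2008} this means $c=\proj_R(\theta(c))$, i.e.\ the involutive almost isometry $\phi:=\proj_R\circ\theta_{|R}$ of the Moufang spherical building $R$ (cf.\ Remark~\ref{localisom}) fixes $c$. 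Since $\phi$ inherits the property of locally fixing opposite chambers from $\theta$---for a residue $R'\subseteq R$ the maps $\proj_{R'}\circ\phi_{|R'}$ and $\proj_{R'}\circ\theta_{|R'}$ agree by transitivity of projections onto nested residues, and $\phi$ and $\theta$ induce the same permutation of the type set---Proposition~\ref{prop:theta-stable-apt-sph}, applied to $R$ and $\phi$, produces a $\phi$-fixed chamber $c'\in R$ with $\delta_\eps(c,c')=w_I$.

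Now \cite[Lemma~5.149]{Abramenko/Brown:2008} applied to $R$ and $\theta(c)$ (whose projection to $R$ is $c$) gives $\delta^*(c',\theta(c))=\delta_\eps(c',c)\,\delta^*(c,\theta(c))=w_Iw_I=1_W$, so $c'$ and $\theta(c)$ are opposite; applying $\theta$, so are $c$ and $\theta(c')$. As $R$ and $\theta(R)$ are parallel, the projections $\proj_R|_{\theta(R)}$ and $\proj_{\theta(R)}|_{R}$ are mutually inverse bijections, so $\proj_R(\theta(c'))=\phi(c')=c'$ forces $\proj_{\theta(R)}(c')=\theta(c')$; feeding $y=\theta(c)$ into the projection formula of \cite[Lemma~5.149]{Abramenko/Brown:2008} for $\theta(R)$ and $c'$ then gives $1_W=\delta^*(\theta(c),c')=\delta_{-\eps}(\theta(c),\theta(c'))\,\delta^*(\theta(c'),c')=w_I\,\delta^*(\theta(c'),c')$ (using that $\theta$ intertwines the distance functions with the induced automorphism of $(W,S)$ and that $\theta(w_I)=w_I$), whence $\dt(c')=\delta^*(c',\theta(c'))=w_I$. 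Let $\Sigma=(\Sigma_\eps,\Sigma_{-\eps})$ be the unique twin apartment spanned by the opposite pair $\{c',\theta(c)\}$. By the standard description of the two halves of a twin apartment \cite{Abramenko/Brown:2008} one has $c\in\Sigma_\eps$ (since $\delta^*(c,\theta(c))=w_I=\delta_\eps(c,c')$) and $\theta(c')\in\Sigma_{-\eps}$ (since $\delta^*(\theta(c'),c')=w_I=\delta_{-\eps}(\theta(c'),\theta(c))$). Hence $\{c,\theta(c')\}$ is an opposite pair contained in $\Sigma$, so $\theta(\Sigma)$---the twin apartment spanned by $\{\theta(c'),\theta(\theta(c))\}=\{\theta(c'),c\}$---equals $\Sigma$. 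Thus $\Sigma$ is a $\theta$-stable twin apartment containing $c$, which (together with the reduction) proves the statement.

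The main obstacle I expect is the bookkeeping in the last paragraph: controlling how the twin apartment of the ambient twin building interacts with the parallel pair $(R,\theta(R))$---in particular pinning down which chamber of $\theta(R)$ sits in $\Sigma_{-\eps}$ and hence that the induced apartment of $R$ is the one through $c$ and $c'$---which is where the identification of Remark~\ref{rmk:spherical-flips-1} and the mutual-inverse property of projections between parallel residues have to be used with care. A secondary point is to justify that Proposition~\ref{prop:theta-stable-apt-sph} is legitimately applicable to the residue $R$ together with the abstract almost isometry $\phi$ (rather than to a group-theoretic quasi-flip of a spherical RGD system): this uses that a spherical residue of the building of an RGD system is itself the building of a spherical RGD system, that $\phi$ still locally fixes opposite chambers, and Remark~\ref{thm:bn-is-building-flip} to realize $\phi$ via an automorphism of the relevant group when necessary.
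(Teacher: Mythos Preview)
Your proof is correct and follows essentially the same route as the paper's: reduce via Proposition~\ref{prop:cox-twisted-inv} and Lemma~\ref{lem:pretty-cool-descent} to a chamber with $\dt=w_I$, observe that the $I$-residue and its $\theta$-image are parallel, apply Proposition~\ref{prop:theta-stable-apt-sph} to the induced almost isometry $\phi=\proj_R\circ\theta_{|R}$ to find a $\phi$-fixed chamber $c'$ opposite $c$ in $R$, and then exhibit the $\theta$-stable twin apartment. The paper condenses your final paragraph into the single assertion that ``the chambers $d$, $\theta(d)$, $d'$, $\theta(d')$ are contained in a unique twin apartment, which by construction is $\theta$-stable''; your explicit verification that $\dt(c')=w_I$ and that both opposite pairs $\{c',\theta(c)\}$ and $\{c,\theta(c')\}$ span the same twin apartment is exactly what underlies that sentence, and your anticipated ``obstacle'' is handled correctly.
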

\begin{proof}
By Proposition \ref{prop:cox-twisted-inv} and Lemma \ref{lem:pretty-cool-descent} there exist a $\theta$-stable spherical subset $I$ of $S$ and a chamber $d\in\mcC_\eps$ such that $\dt(d)=w_I$ and the convex hull of $d$ and $\theta(d)$ contains $c$ and $\theta(c)$.
By Proposition \ref{new}(\ref{new1}) the residue $R_I(d)$ is a spherical Phan residue. In particular, $R_I(d)$ and $R_I(\theta(d))$ are parallel.
Therefore the product $\mathrm{proj}_{R_I(d)}\circ\theta_{|R_I(d)}$ is an involutive almost isometry of the spherical building $R_I(d)$; cf.\ Remark \ref{localisom}. Since $\mathrm{proj}_{R_I(d)}\circ\theta_{|R_I(d)}$ fixes the chamber $d$, Proposition \ref{prop:theta-stable-apt-sph} implies the existence of a $\mathrm{proj}_{R_I(d)}\circ\theta_{|R_I(d)}$-fixed $d' \in R_I(d)$ opposite $d$ in $R_I(d)$. The chambers $d$, $\theta(d)$, $d'$, $\theta(d')$ are contained in a unique twin apartment, which by construction is $\theta$-stable. Since $c$ and $\theta(c)$ lie in the convex hull of $d$ and $\theta(d)$, the claim follows.
\end{proof}

\begin{lemma}[{Adaption of \cite[Lemma 2.4, Part 2]{Helminck/Wang:1993}}] \label{lem:intersecting-theta-stable-apts-Gt-conj}
Let $\theta$ be a quasi-flip of an RGD system $(G,\Uals,T)$ and let $\Sigma$, $\Sigma'$ be $\theta$-stable twin apartments in the associated twin building with non-empty intersection. Then there exists $g \in \mathrm{Stab}_{\Gt}(\Sigma \cap \Sigma')$ such that $g\Sigma = \Sigma'$.
\end{lemma}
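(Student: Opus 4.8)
The plan is to produce a \emph{single} element $g$ that simultaneously maps $\Sigma$ onto $\Sigma'$ and fixes $\Sigma\cap\Sigma'$ pointwise; such a $g$ then automatically lies in $\Stab_{\Gt}(\Sigma\cap\Sigma')$ once it is seen to be $\theta$-fixed, and the mechanism for securing that last point will be a uniqueness statement rather than any direct computation. First I would record the structure of $\mc R:=\Sigma\cap\Sigma'$. As an intersection of two twin apartments it is convex; it is non-empty by hypothesis; and since $\theta$ stabilises each of $\Sigma$ and $\Sigma'$ as a set of chambers, it stabilises $\mc R$, i.e.\ $\theta(\mc R)=\mc R$. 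As a quasi-flip interchanges $\mcC_+$ and $\mcC_-$, the set $\mc R$ contains a chamber $c\in\mcC_+$ together with $\theta(c)\in\mcC_-$.

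Next I would build the candidate inside the group $U_{\mc R}$ generated by the root groups $U_\alpha$ of all twin roots $\alpha$ of $\Sigma$ with $\mc R\subseteq\alpha$. By the classical theory of (twin) buildings (cf.\ \cite{Muehlherr:1994}, \cite{Abramenko/Brown:2008}), $U_{\mc R}$ fixes $\mc R$ chamberwise --- each such $U_\alpha$ fixes $\alpha\supseteq\mc R$ pointwise --- and acts transitively on the set of twin apartments of $\mcC$ containing $\mc R$. This action is moreover free: since $c\in\mc R$, every twin root of $\Sigma$ containing $\mc R$ already contains $c$, so $U_{\mc R}$ is contained in the $G$-conjugate of $U_+$ attached to $c$ in $\Sigma$, and the latter acts simply transitively on the twin apartments through $c$ by the Birkhoff decomposition. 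Hence there is a \emph{unique} $u\in U_{\mc R}$ with $u\Sigma=\Sigma'$.

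The crux is then to show that this $u$ is automatically $\theta$-fixed. Because $\theta$ restricts to an adjacency- and opposition-preserving bijection of $\Sigma$, it permutes the twin roots of $\Sigma$, and for each such twin root $\alpha$ one has $\theta(U_\alpha)=U_{\theta(\alpha)}$: using the equivariance $\theta(g\cdot z)=\theta(g)\cdot\theta(z)$ from Proposition~\ref{prop:BN-quasi-flip-Weyl-auto}(\ref{prop:BN-quasi-flip-Weyl-auto3b}), the subgroup $\theta(U_\alpha)$ acts on $\mcC$ as $\theta\circ U_\alpha\circ\theta$, which is exactly the group of (type-preserving) automorphisms fixing $\theta(\alpha)$ together with its interior panels, namely $U_{\theta(\alpha)}$. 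Since $\theta(\mc R)=\mc R$, the permutation $\alpha\mapsto\theta(\alpha)$ preserves the set of twin roots of $\Sigma$ containing $\mc R$, so $\theta(U_{\mc R})=U_{\mc R}$ and therefore $\theta(u)\in U_{\mc R}$. Moreover $\theta(u)$ still fixes $\mc R$ pointwise --- for $x\in\mc R$ one has $\theta(x)\in\mc R$, whence $\theta(u)\cdot x=\theta\bigl(u\cdot\theta(x)\bigr)=\theta(\theta(x))=x$ --- and $\theta(u)\cdot\Sigma=\theta\bigl(u\cdot\theta(\Sigma)\bigr)=\theta(u\cdot\Sigma)=\theta(\Sigma')=\Sigma'$. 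By the uniqueness just established, $\theta(u)=u$, i.e.\ $u\in\Gt$; and as $u$ fixes $\Sigma\cap\Sigma'$ pointwise it lies in $\Stab_{\Gt}(\Sigma\cap\Sigma')$ and satisfies $u\Sigma=\Sigma'$, so one may take $g:=u$.

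The step I expect to be the main obstacle is the classical input of the second paragraph: one must fix the correct index set --- the twin roots of $\Sigma$ that \emph{contain} $\mc R$ --- and confirm that the corresponding root-group product acts simply transitively on the twin apartments through the convex set $\mc R$, taking care of the degenerate situations in which $\mc R$ already determines the twin apartment (so $U_{\mc R}$ is trivial and $g=1$). Conceptually, the feature that makes the argument close up is that the object one builds the candidate from must be the whole intersection $\Sigma\cap\Sigma'$ rather than a single common chamber: only this is manifestly $\theta$-stable, and it is precisely that $\theta$-stability, fed into the uniqueness statement, that pins $u$ inside $\Gt$. Had one started from a common chamber $c\in\mcC_+$ alone, $\theta(u)$ would land in the conjugate of $U_-$ attached to $\theta(c)$ instead of back in $U_{\mc R}$, and the uniqueness trick would not apply.
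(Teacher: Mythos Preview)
Your argument is correct and follows the same strategy as the paper: produce an element in a group acting simply transitively on the relevant set of twin apartments, then use uniqueness to force it into $\Gt$. The only difference is the choice of acting group. You work with $U_{\mc R}$, generated by the root groups of all twin roots of $\Sigma$ containing the full intersection $\mc R=\Sigma\cap\Sigma'$, and your $\theta$-invariance step $\theta(U_{\mc R})=U_{\mc R}$ is then immediate from $\theta(\mc R)=\mc R$. The paper instead takes a single chamber $c\in\mc R$ and uses only the elementary sharp transitivity of the unipotent radical $U=U_c$ on twin apartments through $c$ (\cite[Corollary~8.32]{Abramenko/Brown:2008}), so it does not need the more refined transitivity statement for convex subsets.

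Your closing paragraph is therefore too pessimistic: the single-chamber approach \emph{does} work, and it is exactly what the paper carries out. The point you are missing is that once $u\in U_c$ with $u\Sigma=\Sigma'$ is chosen, one observes that $u$ (being an isometry fixing $c\in\Sigma\cap\Sigma'$ and sending $\Sigma$ to $\Sigma'$) fixes all of $\Sigma\cap\Sigma'$ pointwise, and in particular fixes $\theta(c)$. Consequently $\theta(u)$ fixes $c$, which brings $\theta(u)$ back to the chamber where the sharp transitivity of $U_c$ can be applied. That single extra observation --- that $u$ automatically fixes the $\theta$-image of the chosen base chamber --- plays in the paper's proof the same role that the manifest $\theta$-stability of $\mc R$ plays in yours.
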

\begin{proof}
Let $c \in \Sigma \cap \Sigma'$. Since $\theta$ stabilizes $\Sigma$ and $\Sigma'$, we have $\theta(c)\in \Sigma\cap\Sigma'$. By \cite[Corollary~8.32]{Abramenko/Brown:2008} the unipotent radical $U$ of the Borel subgroup $B$ that stabilizes $c$ acts sharply transitively on the set of twin apartments containing $c$. Hence there exists $u\in U$ satisfying $\Sigma'=u\Sigma$. As $u$ acts as a building automorphism that fixes $c$, it fixes each chamber in $\Sigma\cap\Sigma'$; in particular it fixes $\theta(c)$. Therefore $\theta(u)$ fixes $c$ by Proposition \ref{prop:BN-quasi-flip-Weyl-auto}(\ref{prop:BN-quasi-flip-Weyl-auto3}), whence $\theta(u)\in U$.
As $\Sigma$, $\Sigma'$ are $\theta$-stable,
$u\Sigma
  = \Sigma'
   = \theta(\Sigma')
   = \theta(u\Sigma)
   = \theta(u) \Sigma$.
Sharp transitivity of the action of $U \ni u,\theta(u)$ on the set of twin apartments containing $c$ (\cite[Corollary~8.32]{Abramenko/Brown:2008}) implies $u=\theta(u)\in \Gt$.
\end{proof}

Inspired by \cite{Rossmann:1979}, \cite{Matsuki:1979}, \cite{Springer:1984}, \cite{Helminck/Wang:1993} we arrive at our desired double coset decomposition.

\begin{proposition}[{Adaption of \cite[Proposition 6.10]{Helminck/Wang:1993}}] \label{prop:DCD-A}
Let $(G,\Uals,T)$ be an RGD system, let $\mcC$ be the associated twin building, let $\theta$ be a quasi-flip of $G$ that locally fixes opposite chambers of $\mcC$, and let $\{ \Sigma_i \mid i \in I\}$ be a set of representatives of the $\Gt$-conjugacy classes of $\theta$-stable twin apartments. If $B$ is a Borel subgroup of $G$, then 
\[\Gt \backslash G / B \isomorph \bigcup_{i \in I} W_{\Gt}(\Sigma_i) \backslash W_{G}(\Sigma_i) , \]
where $W_{X}(\Sigma_i) := \Stab_X(\Sigma_i) / \Fix_X(\Sigma_i)$.
\end{proposition}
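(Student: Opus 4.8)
The plan is to realize the double coset space $\Gt \backslash G / B$ as an orbit space on the set of chambers of $\mcC_\eps$, and then refine the orbit decomposition by sorting chambers according to which $\Gt$-conjugacy class of $\theta$-stable twin apartments they belong to. Since $B$ is the stabilizer of a chamber $c_0 \in \mcC_\eps$ and $G$ acts transitively on $\mcC_\eps$, we have a $\Gt$-equivariant bijection $G/B \cong \mcC_\eps$, hence $\Gt \backslash G / B \cong \Gt \backslash \mcC_\eps$, the set of $\Gt$-orbits on chambers. The task is therefore to parametrize $\Gt$-orbits of chambers.

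**Key steps.** First I would invoke Proposition~\ref{thm:theta-stable-apt}: every chamber $c$ lies in some $\theta$-stable twin apartment, so every chamber lies in a $\Gt$-translate of one of the representatives $\Sigma_i$. This gives a surjection $\bigsqcup_{i \in I} (\text{chambers of } \Sigma_i \cap \mcC_\eps) \twoheadrightarrow \Gt \backslash \mcC_\eps$. Second, I would show this map descends to a \emph{bijection}
\[
\Gt \backslash \mcC_\eps \;\cong\; \bigsqcup_{i \in I} \bigl(\Stab_{\Gt}(\Sigma_i) \backslash (\Sigma_i \cap \mcC_\eps)\bigr).
\]
Well-definedness and injectivity hinge on Lemma~\ref{lem:intersecting-theta-stable-apts-Gt-conj}: if two chambers $c \in \Sigma_i$, $c' \in \Sigma_j$ are $\Gt$-conjugate, say $gc = c'$ with $g \in \Gt$, then $g\Sigma_i$ and $\Sigma_j$ are both $\theta$-stable twin apartments through $c'$; one needs that $g\Sigma_i$ and $\Sigma_j$ are then $\Gt$-conjugate by an element fixing $c'$, forcing $i = j$ (since the $\Sigma_i$ are class representatives) and showing $c, c'$ lie in the same $\Stab_{\Gt}(\Sigma_i)$-orbit. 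Here I would apply Lemma~\ref{lem:intersecting-theta-stable-apts-Gt-conj} to the pair $(g\Sigma_i, \Sigma_j)$, whose intersection contains $c'$ and hence is non-empty. Third, I would identify $\Stab_{\Gt}(\Sigma_i) \backslash (\Sigma_i \cap \mcC_\eps)$ with $W_{\Gt}(\Sigma_i) \backslash W_G(\Sigma_i)$: the chambers of $\Sigma_i \cap \mcC_\eps$ form a torsor under $W_G(\Sigma_i) = \Stab_G(\Sigma_i)/\Fix_G(\Sigma_i)$ (the apartment, being thin of type $(W,S)$, has its $\mcC_\eps$-half in bijection with $W$, and $\Stab_G(\Sigma_i)$ acts through this quotient, which acts simply transitively); modding out by the further action of $\Stab_{\Gt}(\Sigma_i)$, which acts through $W_{\Gt}(\Sigma_i) = \Stab_{\Gt}(\Sigma_i)/\Fix_{\Gt}(\Sigma_i) \le W_G(\Sigma_i)$, yields exactly $W_{\Gt}(\Sigma_i) \backslash W_G(\Sigma_i)$. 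One should check $\Fix_{\Gt}(\Sigma_i) = \Gt \cap \Fix_G(\Sigma_i)$, which is immediate.

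**Main obstacle.** The delicate point is the injectivity argument in the second step, specifically making precise the transition ``both $g\Sigma_i$ and $\Sigma_j$ are $\theta$-stable twin apartments through $c'$, hence $i=j$ and the two chambers are in one $\Stab_{\Gt}(\Sigma_i)$-orbit.'' One must be careful that $g\Sigma_i$ is $\theta$-stable — this uses that $g \in \Gt$ commutes with $\theta$ and $\Sigma_i$ is $\theta$-stable — and then apply Lemma~\ref{lem:intersecting-theta-stable-apts-Gt-conj} to get $h \in \Stab_{\Gt}(g\Sigma_i \cap \Sigma_j)$ with $h(g\Sigma_i) = \Sigma_j$; since $c' \in g\Sigma_i \cap \Sigma_j$, the element $h$ fixes $c'$, so $hg \in \Gt$ carries $\Sigma_i$ to $\Sigma_j$ and $c$ to $c'$. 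As the $\Sigma_i$ are representatives of distinct $\Gt$-classes, $i = j$, and $hg \in \Stab_{\Gt}(\Sigma_i)$ witnesses that $c$, $c'$ lie in one $\Stab_{\Gt}(\Sigma_i)$-orbit. The rest is bookkeeping; the only other thing worth stating carefully is that $W_G(\Sigma_i)$ acts simply transitively on the chamber set of $\Sigma_i \cap \mcC_\eps$, which follows from the definition of an apartment as an isometric copy of $(W, \delta_S)$ together with saturation of the twin $BN$-pair.
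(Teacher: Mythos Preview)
Your proposal is correct and follows essentially the same approach as the paper's own proof: both identify $\Gt \backslash G / B$ with $\Gt$-orbits on chambers via Proposition~\ref{thm:theta-stable-apt}, use Lemma~\ref{lem:intersecting-theta-stable-apts-Gt-conj} to show that the $\Gt$-class of $\theta$-stable apartments through a chamber is well defined, and then parametrize chambers inside each $\Sigma_i$ by $W_G(\Sigma_i)$ modulo the action of $\Stab_{\Gt}(\Sigma_i)$. Your injectivity argument is in fact spelled out more carefully than the paper's; the only superfluous remark is the appeal to saturation at the end---simple transitivity of $W_G(\Sigma_i)$ on the chambers of $\Sigma_i \cap \mcC_\eps$ follows already from strong transitivity of $G$ together with the fact that $G$ acts by type-preserving automorphisms.
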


\begin{proof}
By Proposition \ref{thm:theta-stable-apt}, to any pair $c$, $c'$ of $\Gt$-conjugate chambers there exist $\Gt$-conjugate $\theta$-stable twin apartments $\Sigma \ni c$, $\Sigma' \ni c'$.
Since by Lemma \ref{lem:intersecting-theta-stable-apts-Gt-conj} intersecting $\theta$-stable apartments are $\Gt$-conjugate, every chamber lies in a unique $\Gt$-orbit of $\theta$-stable apartments, represented by some $\Sigma_i$.
Therefore the orbit space $\Gt \backslash G / B$ can be parametrized by the $\Sigma_i$ and the $\Stab_{\Gt}(\Sigma_i)$-orbits on the chambers in each $\Sigma_i$.
The latter in turn are parametrized by $W_{G}(\Sigma_i)=\Stab_G(\Sigma_i) / \Fix_G(\Sigma_i)$, so that the action of $\Stab_{\Gt}(\Sigma_i)$ on $\Stab_G(\Sigma_i) / \Fix_G(\Sigma_i)$ yields the desired decomposition.
\end{proof}

\cite[Remark 6.11]{Helminck/Wang:1993} yields an alternative parameterization of this double coset space.

\begin{proposition}[{Adaption of \cite[Proposition 6.8]{Helminck/Wang:1993}}]
	\label{prop:DCD-Springer}
Let $(G,\Uals,T)$ be an RGD system, let $\mcC$ be the associated twin building, and let $\theta$ be a quasi-flip of $G$ that locally fixes opposite chambers of $\mcC$.
If $B$ is a Borel subgroup of $G$ stabilizing some chamber $c$ and $\Sigma$ is a $\theta$-stable twin apartment containing $c$, 
then
\[G / B \isomorph \{g\Fix_G(\Sigma) \mid g^{-1}\theta(g)\in \Stab_G(\Sigma) \} . \]
In particular, $$G = \Gt V B,$$ for any set $V$ of $\Gt \times \Fix_G(\Sigma)$-orbit representatives on $\{g \in G \mid g^{-1}\theta(g)\in \Stab_G(\Sigma) \}$.
%
%
\end{proposition}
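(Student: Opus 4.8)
The plan is to realise the asserted parametrisation through the map $\Phi\colon g\Fix_G(\Sigma)\mapsto gB$ and then to extract the decomposition $G=\Gt VB$ from it. First I would record the elementary fact that $\Fix_G(\Sigma)\le B$: the group $\Fix_G(\Sigma)$ fixes every chamber of $\Sigma$, in particular the chamber $c$, so $\Fix_G(\Sigma)\le\Stab_G(c)=B$ and $\Phi$ is well defined on cosets. Next I would rewrite the condition $g^{-1}\theta(g)\in\Stab_G(\Sigma)$ geometrically: from the equivariance $\wtheta(g'd)=\theta(g')\wtheta(d)$ of Proposition~\ref{prop:BN-quasi-flip-Weyl-auto}(\ref{prop:BN-quasi-flip-Weyl-auto3b}) together with $\wtheta(\Sigma)=\Sigma$ one obtains $\wtheta(g'\Sigma)=\theta(g')\Sigma$ for every $g'\in G$, hence $g'\Sigma$ is a $\theta$-stable twin apartment if and only if $(g')^{-1}\theta(g')\in\Stab_G(\Sigma)$. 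Thus the right-hand side is just the set of $\theta$-stable twin apartments, each tagged with a coset modulo the pointwise stabiliser, and $\Phi$ forgets the apartment, retaining only the chamber $g'c\in g'\Sigma$.

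Surjectivity of $\Phi$ is exactly where Proposition~\ref{thm:theta-stable-apt} enters: given $gB$, the chamber $gc$ lies in some $\theta$-stable twin apartment $\Sigma'$, and since the unipotent radical of $B$ acts sharply transitively on the twin apartments through $c$ (\cite[Corollary~8.32]{Abramenko/Brown:2008}, as already used in the proof of Lemma~\ref{lem:intersecting-theta-stable-apts-Gt-conj}), each such apartment has the form $b\Sigma$ with $b\in B$; hence $\Sigma'=gb\Sigma$ for some $b\in B$, so $(gb)^{-1}\theta(gb)\in\Stab_G(\Sigma)$ and $\Phi\bigl(gb\Fix_G(\Sigma)\bigr)=gbB=gB$. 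The map $\Phi$ is moreover equivariant for left multiplication by $\Gt$, and I would check that it is injective on $\Gt$-orbits: if $gB=hg'B$ with $h\in\Gt$, then $g\Sigma$ and $hg'\Sigma$ are $\theta$-stable twin apartments meeting in the common chamber $gc=hg'c$, so Lemma~\ref{lem:intersecting-theta-stable-apts-Gt-conj} supplies $k\in\Gt$ with $k(g\Sigma)=hg'\Sigma$ fixing $gc$, and tracing $k$ through the identity $\Fix_G(\Sigma)=B\cap\Stab_G(\Sigma)$ then places $g'\Fix_G(\Sigma)$ in the $\Gt$-orbit of $g\Fix_G(\Sigma)$. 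So $\Phi$ is the surjection realising the stated parametrisation, inducing a bijection on $\Gt$-orbits consistently with Proposition~\ref{prop:DCD-A}.

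The concluding decomposition is then formal. The set $Y:=\{g\in G\mid g^{-1}\theta(g)\in\Stab_G(\Sigma)\}$ is stable under left multiplication by $\Gt$ and right multiplication by $\Fix_G(\Sigma)$ (for the latter one uses that $\theta(f)$ still fixes $\Sigma$ pointwise when $f$ does), so any set $V$ of orbit representatives satisfies $\Gt V\Fix_G(\Sigma)=Y$; surjectivity of $\Phi$ says precisely that $YB=G$; and since $\Fix_G(\Sigma)\le B$ this yields $G=\Gt V\Fix_G(\Sigma)B=\Gt VB$. I expect the main obstacle to be the injectivity bookkeeping of the second paragraph: Lemma~\ref{lem:intersecting-theta-stable-apts-Gt-conj} only produces a $\Gt$-element that fixes the common chamber, and forcing it into $\Fix_G(\Sigma)$ — thereby pinning down the fibres of $\Phi$ as single $\Stab_{\Gt}$-orbits of $\theta$-stable twin apartments — is the step that genuinely carries over \cite[Proposition~6.8]{Helminck/Wang:1993}.
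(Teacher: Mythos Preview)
Your approach is essentially the same as the paper's: both establish the map $g\Fix_G(\Sigma)\mapsto gB$ as a surjection by invoking Proposition~\ref{thm:theta-stable-apt} to produce a $\theta$-stable twin apartment through an arbitrary chamber and then transporting it back to $\Sigma$ by a group element, after which $G=\Gt VB$ is formal. The paper phrases the transport via strong transitivity on (chamber, apartment) pairs, whereas you route through the unipotent radical's simple transitivity on twin apartments through $c$; these are equivalent. Your treatment is in fact more thorough than the paper's brief argument --- you make the well-definedness $\Fix_G(\Sigma)\le B$ explicit, and you supply the $\Gt$-orbit injectivity via Lemma~\ref{lem:intersecting-theta-stable-apts-Gt-conj}, which the paper does not spell out but which is what actually reconciles this proposition with Proposition~\ref{prop:DCD-A}.
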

\begin{proof}
Let $h \in G$ and let $\Sigma'$ be a $\theta$-stable twin apartment containing $h.c$. By strong transitivity, there exists $g\in G$ satisfying $\Sigma'=g.\Sigma$ and $h.c=g.c$. The latter implies $hB = gB$, whence $h$ and $g$ are representing identical cosets. Then $\theta(g.\Sigma)=\theta(g).\Sigma=g.\Sigma$, i.e., $g^{-1}\theta(g)\in \Stab_G(\Sigma)$.
Since $g$ is unique up to right translation by $\Fix_G(\Sigma)$, we obtain $\{g\Fix_G(\Sigma) \mid g^{-1}\theta(g)\in \Stab_G(\Sigma) \} \cong G/B$ via $g\Fix_G(\Sigma) \mapsto gB$. 
\end{proof}

\subsubsection*{Uniquely $2$-divisible root groups}

One key property that ensures that a quasi-flip locally fixes opposite chambers is 
that the root groups be uniquely $2$-divisible, see Propositions \ref{prop:moufang-twodivis-fixpts} and \ref{prop:theta-stable-apt-sph-pol}. This can be considered as a generalization of the concept that the defining characteristic of an algebraic group or a Kac--Moody group be distinct from two; see also Remark \ref{algchar2}.

\begin{defn} \label{2d}
A group $G$ is called \Defn{$2$-divisible}, if for each $g\in G$ there exists $h\in G$ satisfying $h^2=g$. If $h$ is unique with that property, we call $G$ \Defn{uniquely $2$-divisible}.
\end{defn}

\begin{remark} \label{algchar2}
Let $G$ be a connected isotropic reductive linear algebraic group defined over an infinite field $\F$ and let $G(\F)$ be the subgroup of $\F$-rational points of $G$. The root groups of $G(\F)$ are uniquely $2$-divisible if and only if $\charac\F\neq 2$, because they are isomorphic to extensions of $\F$-vector spaces by $\F$-vector spaces; see \cite[proof of Theorem 8.1]{Borel/Tits:1973}, \cite[Definition 3.4]{Steinberg:1973}.
\end{remark}

\begin{proposition} \label{prop:moufang-twodivis-fixpts}
Let $\mouf = (X, (U_x)_{x \in X})$ be a Moufang set. If the root groups $U_x$ are uniquely $2$-divisible, then an involutive automorphism of $\mouf$ fixing a point necessarily fixes a second point.
\end{proposition}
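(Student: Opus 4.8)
The statement concerns an involutive automorphism $\theta$ of a Moufang set $\mouf = (X,(U_x)_{x\in X})$ with uniquely $2$-divisible root groups, and asserts that if $\theta$ fixes some point, then it fixes a second one. My plan is to argue by a direct ``square root'' construction, exploiting the sharp transitivity of the root groups. So suppose $\theta$ fixes the point $x_0 \in X$. Pick any point $y \in X \setminus \{x_0\}$. Then $\theta(y)$ is again a point different from $x_0$ (since $\theta$ is a bijection fixing $x_0$), so by sharp transitivity of $U_{x_0}$ on $X \setminus \{x_0\}$ there is a unique $u \in U_{x_0}$ with $u(y) = \theta(y)$. The idea is that a suitable ``half'' of $u$, applied to $y$, produces a $\theta$-fixed point distinct from $x_0$.

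To make this precise I would first record how $\theta$ interacts with the root group $U_{x_0}$: since $\theta$ is an automorphism of the Moufang set fixing $x_0$, conjugation by $\theta$ induces an automorphism of the group $U_{x_0}$ (it maps $U_{x_0}$ into $\{U_z \mid z \in X\}$, and fixing $x_0$ forces $\theta U_{x_0}\theta^{-1} = U_{x_0}$). Call this automorphism $\bar\theta$ of $U_{x_0}$; it is again involutive. Now apply $\theta$ to the relation $u(y) = \theta(y)$: we get $\bar\theta(u)(\theta(y)) = \theta^2(y) = y$, i.e. $\bar\theta(u) = u^{-1}$ as elements of $U_{x_0}$, using sharp transitivity once more (both $\bar\theta(u)$ and $u^{-1}$ send $\theta(y)$ to $y$). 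So $u$ is a $\bar\theta$-twisted involution of $U_{x_0}$. Here is where unique $2$-divisibility enters: let $v \in U_{x_0}$ be the unique square root of $u$, so $v^2 = u$. Then $\bar\theta(v)^2 = \bar\theta(u) = u^{-1} = (v^{-1})^2$, and by uniqueness of square roots $\bar\theta(v) = v^{-1}$. Set $z := v(y)$. I claim $z$ is a second $\theta$-fixed point: $\theta(z) = \theta(v(y)) = \bar\theta(v)(\theta(y)) = v^{-1}(u(y)) = v^{-1}(v^2(y)) = v(y) = z$. And $z \neq x_0$ because $v \in U_{x_0}$ maps $X \setminus \{x_0\}$ into itself (it fixes $x_0$) and $y \neq x_0$.

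The main obstacle — really the only non-formal point — is justifying that $\theta$-conjugation induces a well-defined \emph{group} automorphism $\bar\theta$ of $U_{x_0}$ that is compatible with the action in the sense $\theta(u(p)) = \bar\theta(u)(\theta(p))$ for all $p \in X$; this last identity is immediate from $\bar\theta(u) = \theta u \theta^{-1}$ once one knows $\theta U_{x_0}\theta^{-1} = U_{x_0}$, which follows from the definition of an automorphism of a Moufang set together with $\theta(x_0) = x_0$. A secondary point to check is that unique $2$-divisibility of $U_{x_0}$ really does give a \emph{unique} square root so that the deduction $\bar\theta(v) = v^{-1}$ from $\bar\theta(v)^2 = (v^{-1})^2$ is valid — but this is exactly the hypothesis. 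Everything else is bookkeeping with sharp transitivity. I would present the argument in this order: (1) reduce to the group-theoretic setup $\bar\theta \in \Aut(U_{x_0})$, $\bar\theta^2 = \id$; (2) translate fixing a second point into finding a suitable element of $U_{x_0}$; (3) show the natural candidate $u$ is $\bar\theta$-twisted-involutive; (4) take the unique square root $v$ and conclude $z = v(y)$ is fixed.
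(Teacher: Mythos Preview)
Your proof is correct and follows essentially the same argument as the paper's: fix a point $x_0$, use sharp transitivity of $U_{x_0}$ to find $u$ with $u(y)=\theta(y)$, show $\bar\theta(u)=u^{-1}$, take the unique square root $v$, and verify that $v(y)$ is a second fixed point. Your write-up is in fact slightly more careful than the paper's in spelling out why $\theta U_{x_0}\theta^{-1}=U_{x_0}$ and why the new fixed point differs from $x_0$.
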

\begin{proof}
Let $\phi$ be an involutive automorphism of $\mouf$ which fixes the point $\infty$. Let $a \in X \backslash \{ \infty \}$. If $\phi(a) = a$, then there is nothing to show, so we may assume $a \neq \phi(a)$. Since $U_\infty$ acts simply transitively on $X\setminus\{\infty\}$, there exists a unique $g\in U_\infty$ such that $g.a = \phi(a)$. By $2$-divisibility there exists an $h\in U_\infty$ such that $h^2=g$. 
We claim that $h.a$ is a fixed point. Indeed, $g.a = \phi(a) = \phi g^{-1}\phi.a.= (g^{-1})^\phi.a$.
Since $U_\infty$ acts simply transitively, we therefore have $g^\phi=g^{-1}$. Since, moreover, $U_\infty$ is uniquely $2$-divisible, this implies $h^\phi=h^{-1}$. Hence
$\phi(h.a) = h^\phi.\phi(a) = h^\phi g.a = h^{-1}h^2.a = h.a$.
\end{proof}

\begin{remark} \label{prop:mouf-2-divis}
Inspection of the explicit list of all Moufang polygons, i.e., Moufang buildings of rank two, presented in \cite[Chapter~16]{Tits/Weiss:2002} implies that for a Moufang set $\mouf = (X, (U_x)_{x \in X})$ that can be embedded into a Moufang polygon the root groups $U_x$ are uniquely $2$-divisible if and only if they are $2$-torsion free. Therefore, whenever it makes sense to assign a characteristic to such a Moufang set, unique $2$-divisibility is equivalent to the defining characteristic being distinct from two.
\end{remark}

\begin{prop} \label{prop:theta-stable-apt-sph-pol}
Let $(G,\Uals,T)$ be an irreducible spherical RGD system of rank two, let $\mcC$ be the associated Moufang polygon, and let $\theta$ be an involutive almost isometry of $\mcC$ which fixes some chamber $c$. 
If the root groups $U_\alpha$ are uniquely $2$-divisible, then there exists a $\theta$-fixed chamber of $\mcC$ opposite $c$.
\end{prop}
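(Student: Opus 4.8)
The statement is the rank-two analogue of Proposition \ref{prop:theta-stable-apt-sph} for the irreducible case, and the obvious strategy is to reduce to Proposition \ref{prop:moufang-twodivis-fixpts} on Moufang sets. A Moufang polygon $\mcC$ of rank two comes with two types of panels (the two kinds of vertices, or equivalently the two generators $s,t$ of $S$). The chamber $c$ is fixed by $\theta$, but $\theta$ may swap the two types. I would split into two cases according to whether $\theta$ preserves or interchanges the two types of panels of $\mcC$.

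\textbf{Case 1: $\theta$ preserves the types.} Then $\theta$ restricts to an involutive automorphism of each panel $P$ containing $c$, viewed as a Moufang set (the panel of a Moufang polygon carries the structure of a Moufang set, with root groups the restrictions of the root groups $U_\alpha$ for the two roots whose wall passes through the edge dual to $P$; these are among $\{U_\alpha\}_{\alpha\in\Phi}$, hence uniquely $2$-divisible). Since $\theta$ fixes $c\in P$, Proposition \ref{prop:moufang-twodivis-fixpts} gives a second $\theta$-fixed chamber $c_1\in P$, which is opposite $c$ inside $P$. Now apply the same argument in a panel $P'$ of the other type through $c_1$: one obtains a $\theta$-fixed chamber $c_2$ opposite $c_1$ in $P'$. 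Iterating this $\mathrm{diam}(\mcC)$ times along the two alternating types builds a $\theta$-fixed minimal gallery from $c$; its endpoint is a $\theta$-fixed chamber opposite $c$ in $\mcC$. (This is exactly the rank-two specialisation of the inductive argument in the proof of Proposition \ref{prop:theta-stable-apt-sph}, using $w_S = (st)^{\cdots}$ as an alternating product of the rank-one longest words $w_{\{s\}}, w_{\{t\}}$.) Concretely: at each step I track how far the current $\theta$-fixed chamber is from $c$ and use axiom (Bu2)/(Bu3) to see that moving into a fresh panel of the not-just-used type strictly increases the distance from $c$, until distance $= \mathrm{diam}$, i.e.\ opposition.

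\textbf{Case 2: $\theta$ interchanges the two types.} Then $\theta$ does not stabilise a single panel through $c$, so one cannot apply Proposition \ref{prop:moufang-twodivis-fixpts} directly to a panel. Instead I would consider the rank-two residue $R=\mcC$ itself; since $m:=\mathrm{ord}(st)$ and $\theta$ swaps $s\leftrightarrow t$, the $\theta$-orbit in $S$ is all of $S$, and the relevant geometric object is the residue of type $S$ paired with the Moufang-set structure coming from the induced isometry. The cleanest route is the one already invoked in Proposition \ref{prop:theta-stable-apt-sph}: by \cite[1.32]{Steinberg:1968b} the longest word $w_S$ is a product of the longest words $w_I$ over $\theta$-orbits $I\in\mathcal S$; here $\mathcal S=\{S\}$, so $w_S = w_S$ and the induction degenerates to a single step, meaning I need a $\theta$-fixed chamber opposite $c$ directly from a single ``rank-one over the orbit'' statement. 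This is precisely where unique $2$-divisibility must be used again, but now for the diagonal/twisted Moufang-set structure on $\mcC$ associated to the single orbit $S$; I would invoke (or re-run) the proof of Proposition \ref{prop:moufang-twodivis-fixpts} with $U_\infty$ replaced by the group generated by the root groups for the roots through $c$ that are needed to realise a chamber opposite $c$, checking that the key identities $g^\theta = g^{-1}\Rightarrow h^\theta=h^{-1}$ survive because these root groups are still uniquely $2$-divisible and $\theta$ acts on them compatibly. The substantive check is that this product of root groups still acts sharply transitively on the set of chambers opposite $c$ in $\mcC$, which is standard Moufang-polygon theory (the group generated by the root groups for one ``half'' of a twin apartment acts simply transitively on the chambers opposite a fixed chamber).

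\textbf{Main obstacle.} The type-interchanging case is the real work: in Case 1 everything reduces verbatim to Proposition \ref{prop:moufang-twodivis-fixpts} applied panel by panel, but in Case 2 there is no single $\theta$-stable panel, so the $2$-divisibility argument has to be transplanted from a genuine Moufang set to the ``twisted'' situation where $\theta$ conjugates a root group for a root of type $s$ onto one of type $t$. I expect the proof to resolve this by passing to the square $\theta^2=\id$ restricted to a carefully chosen nilpotent product $U$ of root groups along a minimal gallery from $c$ to a putative opposite, noting $\theta$ permutes the factors, and then running the $g\mapsto h$ with $h^2=g$ extraction inside $U$ (unique $2$-divisibility of $U$ follows from that of each $U_\alpha$ plus the prenilpotent commutator relations (RGD1)). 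The delicate point is the uniqueness of the square root in the non-abelian group $U$, i.e.\ that $U$ is \emph{uniquely} $2$-divisible, not merely $2$-divisible; this is what forces the hypothesis on each individual $U_\alpha$ and is the step I would spell out most carefully.
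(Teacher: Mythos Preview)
Your Case 1 is correct and coincides with the paper's treatment: when $\theta$ preserves types, the claim follows by iterating Proposition \ref{prop:moufang-twodivis-fixpts} panel by panel, exactly as in Proposition \ref{prop:theta-stable-apt-sph}.

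Your Case 2, however, takes a substantially different route from the paper, and the paper's argument is both simpler and avoids the obstacle you identify. Rather than transplanting the $2$-divisibility argument to the full unipotent group $U_+$, the paper exploits the geometry of the type-swap directly. Set $n=\mathrm{diam}(\mcC)$ and $m=\lceil (n-1)/2\rceil$, and choose \emph{any} minimal gallery $(c=c_0,\ldots,c_m)$. Because $\theta$ fixes $c$ and interchanges $s,t$, the concatenation $(\theta(c_m),\ldots,\theta(c_1),c,c_1,\ldots,c_m)$ is again a minimal gallery (the type of the step $(\theta(c_1),c)$ is the $\theta$-image of the type of $(c,c_1)$, hence the opposite type, so no stutter occurs). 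If $n$ is even this gallery already has length $n$, so $c_m$ and $\theta(c_m)$ are opposite and span a unique, hence $\theta$-stable, apartment containing $c$; the chamber opposite $c$ in it is $\theta$-fixed. If $n$ is odd the gallery has length $n-1$; one then applies Proposition \ref{prop:moufang-twodivis-fixpts} \emph{once}, to the single panel $P_i(c_m)$ (which is opposite $\theta(P_i(c_m))$), via the involution $\proj_{P_i(c_m)}\circ\theta$ fixing $c_{m-1}$, to obtain a further chamber $c_{m+1}$ and close up as before.

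The contrast is instructive. Your approach would need unique $2$-divisibility of the whole nilpotent group $U_+$, which---while plausible for Moufang polygons---is an extra lemma with its own case analysis. The paper's trick sidesteps this entirely: for even diameter no $2$-divisibility is used at all, and for odd diameter it is invoked only for a single root group via the rank-one Proposition \ref{prop:moufang-twodivis-fixpts}. On the other hand, if one could cleanly establish unique $2$-divisibility of $U_+$ (and that $\theta$ normalises it, which is clear since $\theta$ fixes $c$ and permutes root groups), your argument would give a parity-free proof and would in fact generalise beyond rank two.
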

\begin{proof}
Let $S = \{ s, t \}$ and let $n$ be the diameter of the Moufang polygon $\mcC$.
If $\theta$ is type-preserving, then the claim holds by the proof of Proposition \ref{prop:theta-stable-apt-sph}.
Therefore we may assume that $\theta(s) = t$.
Let $m:=\lceil \frac{n-1}{2} \rceil$, let $(c=c_0,\ldots,c_{m})$ be a minimal gallery of length $m$, and consider the $\theta$-stable minimal gallery $(\theta(c_{m}),\ldots,c,\ldots,c_{m})$ of length $2m$.
If $n$ is even, then $2m=n$ and the chambers $c_{m}$ and $\theta(c_{m})$ are opposite. Hence $c_{m}$ and $\theta(c_{m})$ are contained in a unique apartment, which for that reason is $\theta$-stable and contains $c$. (See Figure \ref{fig:theta-stable-apt-4gon}.)
\begin{figure}[h]
\centering
\subfloat[Moufang quadrangle]{
  \includegraphics[width=6cm]{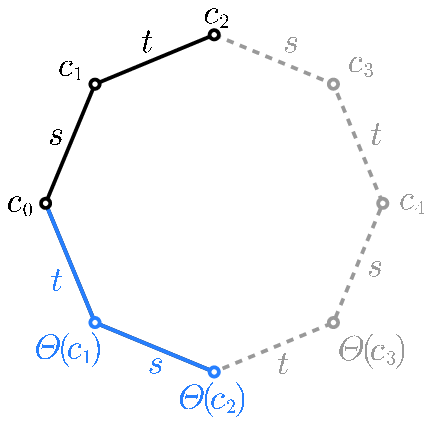}
  \label{fig:theta-stable-apt-4gon}
}
\subfloat[Moufang projective plane]{
  \includegraphics[width=6cm]{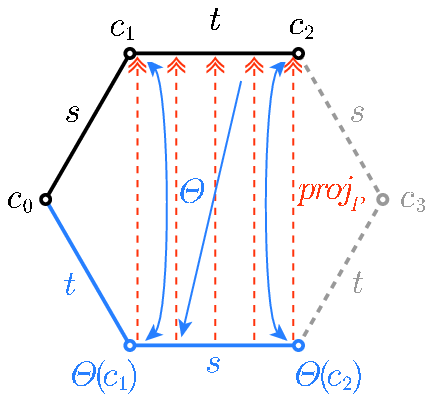}
  \label{fig:theta-stable-apt-3gon}
}
\caption{Constructing a $\theta$-stable apartment inside Moufang polygons.}
\label{fig:theta-stable-apt-sph}
\end{figure}
If $n$ is odd, then there exists $i \in \{ s, t \}$ such that $P_i(c_{m})$ is opposite $\theta(P_i(c_{m}))$. The product $\proj_{P_i(c_{m})} \circ \theta$ is an automorphism of the Moufang set $P_i(c_{m})$, which fixes $c_{m-1}$. Hence by Proposition \ref{prop:moufang-twodivis-fixpts} there is a second chamber $c_{m+1}\in P_i(c_{m})$ fixed by $\proj_{P_i(c_{m})} \circ \theta$, resulting in a $\theta$-stable gallery $(\theta(c_{m+1}),\ldots,c,\ldots,c_{m+1})$ of length $2m+2=n+1$. Hence $c_{m+1}$ is opposite to $\theta(c_{m})$ and the projection of $c_{m+1}$ onto $\theta(P_i(c_{m}))$ equals $\theta(c_{m+1})$. (See Figure \ref{fig:theta-stable-apt-3gon}.) 
Again, we obtain a $\theta$-stable apartment containing $c$. Since $c$ is $\theta$-fixed, necessarily the unique chamber opposite $c$ in that $\theta$-stable apartment has to be $\theta$-fixed as well.
\end{proof}

\subsubsection*{Semi-linear flips}

\begin{defn}
Let $\mathcal{D} = (I,A,\Lambda,(c_i)_{i\in I})$ be a Kac--Moody root datum,
let $\mathcal{F} = (\mathcal{G},(\phi_i)_{i \in I},\eta)$ be the basis of a
Tits functor $\mathcal{G}$ of type $\mathcal{D}$, let $\mathbb{F}$ be a field,
and let $G := \mathcal{G}(\mathbb{F})$ be the corresponding split Kac--Moody
group over $\mathbb{F}$.
A flip $\theta$ of $G$ is called a \Defn{linear} or, for emphasis, an \Defn{$\F$-linear} flip of $G$, if
\begin{itemize}
\item $|\F| \geq 4$ and $\theta$ is an involutive $\F$-automorphism of $G$ (i.e., the field automorphisms $\zeta_i$ in the decomposition of $\theta$ given in \cite[Theorem~4.1]{Caprace:2009} are trivial), or
\item $\F \in \{ \F_2, \F_3 \}$ and $\theta$ is a conjugate of the Chevalley involution.  
\end{itemize}
An \Defn{$\F$-semi-linear} flip of $G$ is an $\F$-linear flip of $G$ composed with a non-trivial field involution of $\F$.
\end{defn}


\begin{prop}\label{lem:semi-linear-strong}
Let $G$ be a split Kac--Moody group over $\F$ and let $\theta$ be an
$\F$-semi-linear flip of $G$. Then $\theta$ is strong and locally fixes opposite chambers.
\end{prop}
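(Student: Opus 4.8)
The plan is to verify the two assertions separately, in both cases reducing everything to the rank one and rank two residues and then invoking the structure theory of Moufang polygons together with the earlier results on semi-linear flips. First I would recall the explicit description of an $\F$-semi-linear flip $\theta$ from \cite[Theorem~4.1]{Caprace:2009}: after conjugation $\theta$ restricts on each rank one subgroup $X_{\alpha} \cong \mathrm{(P)SL}_2(\F)$ to the composite of the Chevalley involution (transpose-inverse, relative to the fixed pinning) with the non-trivial field involution $\sigma$ of $\F$. The crucial local fact is that on the associated rank one building, the projective line $\mathbb{P}^1(\F)$, this flip is exactly the almost isometry induced by a non-degenerate $\sigma$-hermitian form in the sense of Example~\ref{exmpl:proj-space-form-flip}; since $\sigma \neq \mathrm{id}$, a point of $\mathbb{P}^1(\F)$ fixed by $\theta$ is an anisotropic point, and no anisotropic point lies on a panel-chamber which is stabilized by $\theta$ --- equivalently, for every panel $P$ of $\mcC$ one has $\proj_P(\theta) \neq P$ (Definition~\ref{non-strong}). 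This is the content of the first assertion, and it is essentially a local statement about $\mathbb{P}^1(\F)$ with a hermitian form.

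For the strongness claim in detail, I would argue as follows. Let $P$ be a panel of $\mcC_\eps$, say of type $s$. If $\theta(P)$ is not parallel to $P$, then by Lemma~\ref{lem:flip-steep-descent} the set $\proj_P(\theta) = \{c \in P \mid \proj_P(\theta(c)) = c\}$ is either empty or a single chamber, so in particular $\proj_P(\theta)\neq P$; done. If $\theta(P)$ is parallel to $P$, then by Remark~\ref{localisom} the map $\proj_P\circ\theta_{|P}$ is an involutive almost isometry of the rank one building $P$, which (since $\theta$ is $\F$-semi-linear) is conjugate to the almost isometry of $\mathbb{P}^1(\F)$ induced by a non-degenerate $\sigma$-hermitian form with $\sigma\neq\mathrm{id}$. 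Its fixed points are the anisotropic points; since a non-degenerate $\sigma$-hermitian form on a $2$-dimensional space over $\F$ (with $\sigma\neq\mathrm{id}$) has no totally isotropic line but does have anisotropic lines, and since there are at least two of them, $\proj_P\circ\theta_{|P}$ has fixed points but fixes $P$ pointwise only if it is the identity --- which it is not, as $\sigma\neq\mathrm{id}$. Hence $\proj_P(\theta) \subsetneq P$, so $\theta$ is strong.

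For the second assertion, that $\theta$ locally fixes opposite chambers, I would use the same local reduction. By definition one must show: for every spherical residue $R$ with $R$ parallel to $\theta(R)$ and whose type set is a single $\theta$-orbit, the involutive almost isometry $\proj_R\circ\theta_{|R}$ fixes opposite chambers. By the remark after the definition, such $R$ has rank one or two. In the rank one case $R$ is again $\mathbb{P}^1(\F)$ with the hermitian-form almost isometry, and one checks directly that any anisotropic point has an anisotropic point opposite it (the orthogonal complement of an anisotropic line in a $2$-dimensional hermitian space is again anisotropic); this is exactly ``fixing opposite chambers''. In the rank two case, the type set being a single $\theta$-orbit forces $\theta(s) = t$ where $S = \{s,t\}$; then $R$ is a Moufang polygon on which $\theta$ acts as a type-swapping involutive almost isometry, and one invokes Proposition~\ref{prop:theta-stable-apt-sph-pol}: its hypothesis is unique $2$-divisibility of the root groups, which for a split Kac--Moody group holds whenever $\ch\F\neq 2$, and (by the definition of semi-linear flip via a non-trivial field involution $\sigma$ of $\F$, which forces $|\F|$ infinite or at least $\F\neq\F_2,\F_3$ in the relevant $2$-spherical cases) one can check that $\ch\F \neq 2$ is automatic here --- or, more carefully, one splits into the two cases of the definition of $\F$-semi-linear flip and handles $\F\in\{\F_2,\F_3\}$ (where $\sigma=\mathrm{id}$, contradicting ``non-trivial field involution'', so that case is vacuous) versus $|\F|\geq 4$. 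So for $|\F|\geq 4$ with a non-trivial involution $\sigma$, in particular $\ch\F\neq 2$, Proposition~\ref{prop:theta-stable-apt-sph-pol} applies and yields a $\theta$-fixed opposite chamber.

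The main obstacle I anticipate is keeping the bookkeeping of cases straight --- type-preserving versus type-swapping residues, rank one versus rank two, and the low-cardinality fields $\F_2, \F_3$ where ``semi-linear'' degenerates --- and in particular pinning down precisely why the rank one local model of an $\F$-semi-linear flip is the hermitian-form almost isometry rather than merely citing it. Once that local identification is made, both assertions follow formally: strongness is the statement ``anisotropic points are not on stabilized panel-chambers'', and locally fixing opposite chambers is ``anisotropic points have anisotropic opposites (rank one) plus Proposition~\ref{prop:theta-stable-apt-sph-pol} (rank two)''.
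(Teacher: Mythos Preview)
Your overall strategy---reduce to panels and analyze the induced involution on $\mathbb{P}^1(\F)$ via a $\sigma$-hermitian form on $\F^2$---is exactly the paper's approach. But you have the central correspondence reversed: the chambers of $P$ fixed by $\proj_P\circ\theta_{|P}$ correspond to the \emph{isotropic} one-dimensional subspaces, not the anisotropic ones. Indeed, by Remarks~\ref{rmk:spherical-flips} and~\ref{localisom} the almost isometry $\proj_P\circ\theta_{|P}$ is the polarity $U\mapsto U^\perp$ of Example~\ref{exmpl:proj-space-form-flip}, whose fixed points are the lines with $U=U^\perp$, i.e.\ the isotropic ones. With the correct identification, strongness amounts to the existence of an anisotropic line (immediate from non-degeneracy), while ``locally fixes opposite chambers'' amounts to the fact that a non-degenerate $\sigma$-hermitian form on $\F^2$ with $\sigma\neq\id$ cannot have exactly one isotropic line---this is what the paper invokes. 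Your argument that the orthogonal complement of an anisotropic line is again anisotropic is a true statement, but under the corrected dictionary it proves nothing about fixed chambers; and your assertion that such a form ``has no totally isotropic line'' is simply false (hyperbolic hermitian planes exist), so even your strongness paragraph is internally inconsistent with your own stated correspondence.

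A secondary point: because $\theta$ is a \emph{flip} rather than merely a quasi-flip, the induced automorphism of $(W,S)$ is trivial (Definition~\ref{def:BN-quasi-flip}), so every $\theta$-orbit on $S$ is a singleton and only rank one residues occur in the definition of ``locally fixes opposite chambers''. Your rank two discussion is therefore unnecessary---which is just as well, since your claim that $\ch\F\neq 2$ is automatic for an $\F$-semi-linear flip is false (e.g.\ $\F=\F_4$ admits a non-trivial field involution).
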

\begin{proof}
Let $\mathcal{C}$ be the twin building corresponding to $G$, let $P$ be a
panel of $\mathcal{C}$ which is parallel to $\theta(P)$, let $G_P \cong
\mathrm{(P)SL_2}(\mathbb{F})$ be the rank one subgroup of $G$ corresponding to
the pair
$P$, $\theta(P)$, and let $\sigma$ be the non-trivial field automorphism of $\mathbb{F}$
involved in $\theta$. The theory of flips of $\mathrm{(P)SL_2}(\mathbb{F})$ as
developed in \cite[Section 4]{Medts/Gramlich/Horn} implies the
existence of a two-dimensional non-degenerate $\sigma$-hermitian sesquilinear form (on the natural module of
$\mathrm{SL_2}(\mathbb{F})$) whose isotropic one-dimensional subspaces are in
one-to-one correspondence to the $\mathrm{proj}_P \circ \theta_{|P}$-fixed
chambers of $P$; cf.\ Remark \ref{localisom}. 
Therefore the fact that $\theta$ locally fixes opposite chambers follows from the
fact that a two-dimensional non-degenerate isotropic $\sigma$-hermitian sesquilinear form with exactly one
isotropic one-dimensional subspace does not exist for $\sigma \neq
\mathrm{id}$. The fact that $\theta$ is strong follows from Lemma
\ref{lem:flip-steep-descent}(\ref{lem:flip-steep-descent2}) and the
observation that a two-dimensional non-degenerate isotropic $\sigma$-hermitian
sesquilinear form without 
anisotropic one-dimensional subspaces does not exist for $\sigma \neq
\mathrm{id}$.
\end{proof}


\section{Applications}

\subsection{Double coset decompositions}

Here we record a double coset decomposition which allows us to simultaneously generalize \cite[Proposition~6.10]{Helminck/Wang:1993} and \cite[Proposition~5.15]{Kac/Wang:1992}.

\begin{theorem} \label{cor:DCD-A-twodivis} \label{cor:DCD-A-twosph}
Let $(G,\Uals,T)$ be an RGD system, let $\mcC$ be the associated twin
building, let $\theta$ be a quasi-flip of $G$, let $B$ be a Borel subgroup of
$G$, let $\Gt$ be the subgroup of $G$ of $\theta$-fixed elements, and let $\{
\Sigma_i \mid i \in I\}$ be a set of representatives of the $\Gt$-conjugacy
classes of $\theta$-stable twin apartments of $\mcC$. If the root groups are
uniquely $2$-divisible or if $G$ is split algebraic/Kac--Moody and $\theta$ is
a semi-linear flip, then
\[\Gt \backslash G / B \isomorph \bigcup_{i \in I} W_{\Gt}(\Sigma_i) \backslash W_{G}(\Sigma_i) , \]
where $W_{X}(\Sigma_i) := \Stab_X(\Sigma_i) / \Fix_X(\Sigma_i)$.
\end{theorem}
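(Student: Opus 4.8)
My plan is to reduce the statement to Proposition~\ref{prop:DCD-A}, which already establishes precisely this double coset decomposition whenever the quasi-flip $\theta$ locally fixes opposite chambers of $\mcC$. So the whole task is to check, under each of the two alternative hypotheses, that $\theta$ enjoys this property. Recall that the spherical residues $R$ entering the definition of ``locally fixes opposite chambers'' have rank one or two; that parallelism of $R$ and $\theta(R)$ turns $\phi_R := \proj_R \circ \theta_{|R}$ into an involutive almost isometry of $R$ (Remark~\ref{localisom}); and that one must produce, for each $\phi_R$-fixed chamber of $R$, an opposite $\phi_R$-fixed chamber.

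First I would treat the case that all root groups $\Uals$ are uniquely $2$-divisible. Since $\mcC$ is Moufang, a rank one residue $R$ is a Moufang set and a rank two residue $R$ is a Moufang polygon, and in either case the root groups of $R$ are, up to isomorphism, root groups of $(G,\Uals,T)$ and hence again uniquely $2$-divisible. If $R$ has rank one, then the hypothesis that its type set be a single $\theta$-orbit forces $\theta$ to fix the type of $R$, so $\phi_R$ is an involutive automorphism of the Moufang set $R$; since it fixes a chamber, Proposition~\ref{prop:moufang-twodivis-fixpts} supplies a second fixed chamber, so $\phi_R$ fixes opposite chambers. If $R$ has rank two, the same hypothesis forces $\theta$ to interchange the two types of $R$, so $\phi_R$ is a type-interchanging involutive almost isometry of the Moufang polygon $R$; Proposition~\ref{prop:theta-stable-apt-sph-pol}, applied to $\phi_R$, then produces a $\phi_R$-fixed chamber opposite the given one. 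Thus $\theta$ locally fixes opposite chambers, and Proposition~\ref{prop:DCD-A} yields the asserted decomposition.

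Next I would handle the case that $G$ is a split reductive algebraic group or a split Kac--Moody group over $\F$ and $\theta$ is an $\F$-semi-linear flip. Here Proposition~\ref{lem:semi-linear-strong} states outright that such a $\theta$ is strong and locally fixes opposite chambers, so once more Proposition~\ref{prop:DCD-A} applies. The one point demanding a little care will be the rank two step of the first case: one must make sure that the involutive almost isometry $\phi_R$ of a rank two residue really falls within the scope of Proposition~\ref{prop:theta-stable-apt-sph-pol} --- that its root groups inherit unique $2$-divisibility, and that the degenerate case $m_{st}=2$ (where the residue is a reducible generalized digon) is harmless, which a glance at the proof of Proposition~\ref{prop:theta-stable-apt-sph-pol} confirms. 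All of the substantive geometry has, by design, already been carried out in Propositions~\ref{prop:DCD-A}, \ref{prop:moufang-twodivis-fixpts}, \ref{prop:theta-stable-apt-sph-pol} and \ref{lem:semi-linear-strong}; the present theorem merely assembles them.
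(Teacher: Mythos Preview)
Your proposal is correct and follows essentially the same route as the paper: reduce to Proposition~\ref{prop:DCD-A} by verifying that $\theta$ locally fixes opposite chambers, invoking Propositions~\ref{prop:moufang-twodivis-fixpts} and~\ref{prop:theta-stable-apt-sph-pol} in the uniquely $2$-divisible case and Proposition~\ref{lem:semi-linear-strong} in the semi-linear case. You are in fact more explicit than the paper about why the single-$\theta$-orbit condition forces the type-interchanging situation in rank two and why the reducible digon case is harmless.
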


\begin{proof}
This is a combination of Propositions \ref{thm:theta-stable-apt} and
\ref{prop:DCD-A} and the discussions on uniquely $2$-divisible root groups and
semi-linear flips in Section \ref{sec:$2$-divisible-rootgrps}.
\end{proof}

\begin{cor} \label{cor:DCD-A-alg}
Let $G$ be a connected isotropic reductive algebraic group defined over an infinite field $\F$, let $\theta$ be an abstract involutive automorphism of $G(\F)$, let $\{A_i\mid i\in I\}$ be a set of representatives of the $\Gt(\F)$-conjugacy classes of $\theta$-stable maximal $\F$-split tori in $G$, and let $P$ a minimal parabolic $\F$-subgroup.
 If the characteristic of $\F$ is distinct from two, then
\[\Gt(\F) \backslash G(\F) / P(\F) \isomorph \bigcup_{i \in I} W_{\Gt(\F)}(A_i) \backslash W_{G(\F)}(A_i) . \]
\end{cor}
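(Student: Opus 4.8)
The plan is to derive this from Theorem \ref{cor:DCD-A-twodivis} by translating the algebraic-group data into the language of RGD systems, using the dictionary recorded in Remark \ref{remalg}. First I would invoke Remark \ref{remalg}: since $G$ is connected reductive and isotropic over the infinite field $\F$, after fixing a maximal $\F$-split torus $T$ one obtains a family of root groups $\Uals$ indexed by the relative root system $\Phi$ of $(G(\F), T(\F))$ such that $(G(\F), \Uals, T(\F))$ is an RGD system; the associated twin building $\mcC$ is the one whose positive half is the spherical building of minimal parabolic $\F$-subgroups, so the Borel subgroup $B$ in Theorem \ref{cor:DCD-A-twodivis} is exactly $P(\F)$ for a minimal parabolic $\F$-subgroup $P$. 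Next, by Remark \ref{abstractpossible}(\ref{abstractpossible1}) (the Borel--Tits theorem \cite[Proposition~7.2]{Borel/Tits:1973}), the abstract involutive automorphism $\theta$ of $G(\F)$ maps parabolic $\F$-subgroups to parabolic $\F$-subgroups, hence in particular maps $B = P(\F)$ to a $G(\F)$-conjugate of a minimal parabolic; since all minimal parabolic $\F$-subgroups are $G(\F)$-conjugate and the building is thick and spherical, $\theta(B)$ is a $G(\F)$-conjugate of $B_-$, so $\theta$ is a quasi-flip of $G(\F)$ in the sense of Definition \ref{def:BN-quasi-flip}.

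Then I would verify the hypothesis on the root groups: by Remark \ref{algchar2}, the root groups $U_\alpha$ of $G(\F)$ are uniquely $2$-divisible precisely when $\charac\F \neq 2$, which is exactly our assumption. Thus Theorem \ref{cor:DCD-A-twodivis} applies and gives
\[
\Gt(\F) \backslash G(\F) / P(\F) \isomorph \bigcup_{i \in I} W_{\Gt(\F)}(\Sigma_i) \backslash W_{G(\F)}(\Sigma_i),
\]
where $\{\Sigma_i \mid i \in I\}$ is a set of representatives of the $\Gt(\F)$-conjugacy classes of $\theta$-stable twin apartments of $\mcC$. It remains to match the index set and the Weyl-group quotients with the stated ones in terms of $\theta$-stable maximal $\F$-split tori. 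The apartments of the spherical building of $G(\F)$ are in $G(\F)$-equivariant bijection with the maximal $\F$-split tori of $G$ (each $\F$-split torus $T'$ determines the apartment of parabolics containing the minimal parabolics normalised suitably by $T'$, and conversely an apartment determines its pointwise stabiliser's connected centre's split part); moreover this bijection is compatible with $\theta$, so $\theta$-stable twin apartments correspond to $\theta$-stable maximal $\F$-split tori, and $\Gt(\F)$-conjugacy of apartments corresponds to $\Gt(\F)$-conjugacy of tori. Under this correspondence $\Stab_{G(\F)}(\Sigma_i) = N_{G(\F)}(T'_i)$ and $\Fix_{G(\F)}(\Sigma_i) = Z_{G(\F)}(T'_i)$, so $W_{G(\F)}(\Sigma_i) = N_{G(\F)}(T'_i)/Z_{G(\F)}(T'_i) = W_{G(\F)}(A_i)$, the relative Weyl group of $A_i$, and likewise $W_{\Gt(\F)}(\Sigma_i) = W_{\Gt(\F)}(A_i)$; writing $A_i$ for the split torus corresponding to $\Sigma_i$ yields the claimed decomposition.

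The main obstacle I anticipate is making the correspondence ``$\theta$-stable twin apartments $\leftrightarrow$ $\theta$-stable maximal $\F$-split tori'' precise and showing it respects $\Gt(\F)$-conjugacy and identifies the relevant stabilisers; this is a standard but slightly delicate piece of the Bruhat--Tits dictionary (cf.\ \cite[Section~7.9]{Abramenko/Brown:2008} and \cite{Helminck/Wang:1993}), and care is needed because an apartment only pins down the split torus up to its action on the building, i.e.\ up to the centre of a Levi, so one really recovers the maximal split torus only after passing to $\Fix$. The remaining points—identifying $B$ with $P(\F)$, checking $\theta$ is a quasi-flip, checking unique $2$-divisibility—are immediate from the cited results.
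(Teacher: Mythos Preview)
Your proof is correct and follows essentially the same route as the paper: reduce to Theorem~\ref{cor:DCD-A-twodivis} via Remark~\ref{remalg}, check that $\theta$ is a quasi-flip, and verify unique $2$-divisibility of the root groups. The paper's one-line proof cites Remarks~\ref{remalg}, \ref{rmk:spherical-flips}, and \ref{prop:mouf-2-divis}, whereas you invoke Remarks~\ref{remalg}, \ref{abstractpossible}, and \ref{algchar2}; these are interchangeable here (Remark~\ref{algchar2} is in fact more direct than \ref{prop:mouf-2-divis} for the $2$-divisibility, and \ref{abstractpossible} together with the spherical twin-building correspondence of \ref{rmk:spherical-flips} yields the quasi-flip property), and your added paragraph making the apartment/maximal $\F$-split torus dictionary explicit simply spells out what the paper leaves to the reader.
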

\begin{proof}
This follows from Theorem \ref{cor:DCD-A-twosph} plus Remarks \ref{remalg}, \ref{rmk:spherical-flips}, \ref{prop:mouf-2-divis}. 
\end{proof}

\begin{cor} \label{cor:DCD-A-KM}
Let $G$ be a split or quasi-split Kac--Moody group defined over a field $\F$, let $\theta$ be an involutive automorphism of $G$ interchanging the two conjugacy classes of Borel subgroups of $G$, let $\{A_i\mid i\in I\}$ be a set of representatives of the $\Gt$-conjugacy classes of $\theta$-stable $G$-conjugates of the fundamental torus of $G$, and let $B$ be a Borel subgroup of $G$. If the characteristic of $\F$ is distinct from two, then 
\[\Gt \backslash G / B \isomorph \bigcup_{i \in I} W_{\Gt}(A_i) \backslash W_{G}(A_i) . \]
\end{cor}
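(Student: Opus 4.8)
The plan is to deduce this from Theorem~\ref{cor:DCD-A-twodivis} exactly as Corollary~\ref{cor:DCD-A-alg} was deduced in the algebraic case, the main difference being that the building attached to a Kac--Moody group is a genuine (usually non-spherical) twin building, so that no passage through the spherical/twin dictionary of Remark~\ref{rmk:spherical-flips} is needed; the role of Remark~\ref{remalg} is now played by Remark~\ref{remkm}.

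First I would equip $G$ with an RGD system having $G$ itself as its group. For split Kac--Moody groups this is Remark~\ref{remkm}; for quasi-split Kac--Moody groups (which are almost split and satisfy the Galois descent property) it is \cite[Theorem~12.6.3]{Remy:2002}, again recalled in Remark~\ref{remkm}. Let $(B_+,B_-,N,S)$ be the associated twin $BN$-pair, put $T:=B_+\cap B_-$ (the fundamental torus) and let $\mcC$ be the associated twin building; after conjugating we may assume $B=B_+$. Since $\theta^2=\id$ and $\theta$ interchanges the two conjugacy classes of Borel subgroups, $\theta(B_+)$ is a $G$-conjugate of $B_-$, so $\theta$ is a quasi-flip of $G$ in the sense of Definition~\ref{def:BN-quasi-flip} (cf.\ Remark~\ref{abstractpossible}).

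Next I would check the characteristic hypothesis of Theorem~\ref{cor:DCD-A-twodivis}. In the split case each root group $U_\alpha$ is isomorphic to the additive group $(\F,+)$, which is uniquely $2$-divisible precisely when $\charac\F\neq 2$. In the quasi-split case the relative root groups occur inside rank one and rank two residues whose structure is governed by the classification of Moufang sets and Moufang polygons, so by Remark~\ref{prop:mouf-2-divis} (cf.\ also Remark~\ref{algchar2} and \cite[Chapter~12]{Remy:2002}) they are uniquely $2$-divisible as soon as they are $2$-torsion free, which holds when $\charac\F\neq 2$. Thus the root groups of $G$ are uniquely $2$-divisible and Theorem~\ref{cor:DCD-A-twodivis} yields
\[\Gt \backslash G / B \isomorph \bigcup_{i \in I} W_{\Gt}(\Sigma_i) \backslash W_{G}(\Sigma_i),\]
where $\{\Sigma_i\mid i\in I\}$ is a set of representatives of the $\Gt$-conjugacy classes of $\theta$-stable twin apartments of $\mcC$.

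It then remains to re-index this decomposition in terms of tori. Sending a twin apartment $\Sigma$ to its pointwise fixator $\Fix_G(\Sigma)$ maps the fundamental twin apartment $\Sigma_0$ to $T$ and is $G$-equivariant; using saturation of the twin $BN$-pair one has $\Fix_G(\Sigma_0)=T$ and $\Stab_G(\Sigma_0)=N$, and together with $N=N_G(T)$ this makes $\Sigma\mapsto\Fix_G(\Sigma)$ a $G$-equivariant bijection from the twin apartments of $\mcC$ onto the $G$-conjugates of $T$, carrying $\Stab_G(\Sigma)$ to the normaliser of $\Fix_G(\Sigma)$, so that $W_G(\Sigma)\isomorph N/T=W$. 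Under this bijection $\theta$-stable twin apartments correspond to $\theta$-stable $G$-conjugates of the fundamental torus and $\Gt$-conjugacy matches $\Gt$-conjugacy; replacing each $\Sigma_i$ by $A_i:=\Fix_G(\Sigma_i)$ gives the claimed decomposition. I expect this last translation to be the only delicate point, and in particular the identity $N=N_G(T)$ (so that distinct twin apartments have distinct pointwise fixators) is the step to watch; for Kac--Moody groups it is a structural fact about the fundamental torus, valid at least once $\F$ is not too small (cf.\ \cite[Lemma~4.8]{Caprace:2009}).
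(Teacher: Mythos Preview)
Your proof is correct and follows the same route as the paper, which simply writes ``This follows from Theorem~\ref{cor:DCD-A-twosph} plus Remark~\ref{remkm}.'' You have additionally spelled out the dictionary between $\theta$-stable twin apartments $\Sigma_i$ and $\theta$-stable conjugates $A_i$ of the fundamental torus (together with the attendant caveat about $N=N_G(T)$), which the paper leaves entirely implicit.
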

\begin{proof}
This follows from Theorem \ref{cor:DCD-A-twosph} plus Remark \ref{remkm}.
\end{proof}

Returning once more to a geometric point of view we now generalize the concept of the flip-flop system introduced in Definition \ref{dfn-flipflop}.

\begin{defn}
For $w\in W$ define
$\Ct_w := \{c\in \mcC_+ \mid \dt(c)=w \}$,
where $\dt(c)=\delta^*(c,\theta(c))$ as in \ref{theta-cod}.
Denote the set of all \thcod{}s by
$$\Inv^\theta(\mcC):= \{ w\in W \mid \text{ there exists } c \in \mcC \text{ such that } \dt(c)=w \}. \qedhere $$ 
\end{defn}
Clearly, $\Inv^\theta(\mcC) \subseteq \Inv^\theta(W)$, cf.\ Definition \ref{itheta}.

\begin{defn}
To every quasi-flip $\theta$ of a group $G$, define the \Defn{twist map} $$\tau_\theta:G\to G:g\mapsto \theta(g^{-1})g.$$ The {$\theta$-twisted action} of $G$ on itself is given by $y *_{\theta} g := \theta(g)^{-1}yg$. Its orbits are called the \Defn{$\theta$-twisted $G$-orbits}. Any subgroup $H \leq G$ acts by $\theta$-twisted action on $G$ and gives rise to \Defn{$\theta$-twisted $H$-orbits}.
\end{defn}

\medskip
The following lemma is inspired by the cocompactness proof of \cite[Theorem 7.1]{Gramlich/Witzel}.

\begin{lem} \label{lem:finite-num-Gt-orbits-on-Ct}
Let $\theta$ be a quasi-flip of an RGD system $(G,\Uals,T)$ and let $\mcC$ be the associated twin building.
If each chamber of $\mcC$ is contained in a $\theta$-stable twin apartment, then, for each $w\in \Inv^\theta(\mcC)$, there exists $a\in G$ such that there is a one-to-one correspondence between the $\Gt$-orbits of $\Ct_w$ and the $\theta$-twisted $aTa^{-1}$-orbits of $\tau_\theta(G)\cap aTa^{-1}$.
In particular, if $T$ is finite, there are only finitely many $\Gt$-orbits on every $\Ct_w$.
\end{lem}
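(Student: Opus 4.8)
The plan is to identify $\Ct_w$ with a coset space of a conjugate of $T$ and then translate everything through the twist map $\tau_\theta$. Fix $w\in\Inv^\theta(\mcC)$ and, since $\Ct_w\neq\emptyset$ by the definition of $\Inv^\theta(\mcC)$, pick a chamber $c_0\in\Ct_w$ and put $B:=\Stab_G(c_0)$. By hypothesis there is a $\theta$-stable twin apartment $\Sigma$ containing $c_0$. Using strong transitivity of $G$ on $\mcC$ (so $G$ is transitive on twin apartments and, within each, on the chambers of a fixed sign) choose $a\in G$ with $aB_+=c_0$ and mapping the twin apartment containing $B_+$ and $B_-$ onto $\Sigma$; set $T_0:=aTa^{-1}$. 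Saturatedness of the twin $BN$-pair gives $\Fix_G(\Sigma)=T_0$, and $\theta$-stability of $\Sigma$ together with the relation $\wtheta(gc)=\theta(g)\wtheta(c)$ gives $\theta(T_0)=T_0$. This $a$ will be the element required in the statement.

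First I would re-run the argument of Proposition~\ref{prop:DCD-Springer}---whose proof uses only the present hypothesis that every chamber lies in a $\theta$-stable twin apartment---with base chamber $c_0$ and apartment $\Sigma$, obtaining a bijection $\mcC_+=G/B\cong\{\,gT_0\mid g^{-1}\theta(g)\in\Stab_G(\Sigma)\,\}$ via $gT_0\mapsto gc_0$. Next I would isolate $\Ct_w$ inside this set. Write $c_0^*:=\wtheta(c_0)$, which lies in $\Sigma$ since $\Sigma$ is $\theta$-stable, and note $\delta^*(c_0,c_0^*)=\dt(c_0)=w$; for $g$ with $n:=g^{-1}\theta(g)\in\Stab_G(\Sigma)$ one computes, using $G$-invariance of $\delta^*$ and $\wtheta(gc_0)=\theta(g)c_0^*$, that $\delta^*(gc_0,\wtheta(gc_0))=\delta^*(c_0,n\,c_0^*)$. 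Since $\Stab_G(\Sigma)/\Fix_G(\Sigma)\cong W$ acts by type-preserving automorphisms on the thin twin building $\Sigma$, hence freely on its chambers, and since in a thin twin building the codistance to a fixed chamber is an injective function, the equality $\delta^*(c_0,n\,c_0^*)=w=\delta^*(c_0,c_0^*)$ holds precisely when $n$ fixes $c_0^*$, i.e.\ precisely when $n\in\Fix_G(\Sigma)=T_0$. Hence $gc_0\in\Ct_w$ if and only if $\tau_\theta(g)=(g^{-1}\theta(g))^{-1}\in T_0$, and the bijection above restricts to a bijection $\Ct_w\cong\{\,gT_0\mid\tau_\theta(g)\in T_0\,\}$ that is equivariant for the left $\Gt$-action on both sides.

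Finally I would push this through $\tau_\theta$. Using that $\tau_\theta(kg)=\tau_\theta(g)$ for $k\in\Gt$, that $\tau_\theta$ induces an injection $\Gt\backslash G\hookrightarrow G$, that $\tau_\theta(gt)=\tau_\theta(g)*_\theta t$, and that the $\theta$-twisted action of the $\theta$-stable group $T_0$ preserves $\tau_\theta(G)\cap T_0$, the assignment $gT_0\mapsto\tau_\theta(g)$ descends to a well-defined map from $\Gt\backslash\{\,gT_0\mid\tau_\theta(g)\in T_0\,\}$ to the set of $\theta$-twisted $T_0$-orbits in $\tau_\theta(G)\cap T_0$; it is surjective because every $y\in\tau_\theta(G)\cap T_0$ equals $\tau_\theta(g)$ for some $g\in G$, which then satisfies $\tau_\theta(g)\in T_0$, and injective because $\tau_\theta(g_2)=\tau_\theta(g_1)*_\theta t=\tau_\theta(g_1t)$ forces $g_2\in\Gt g_1t$, hence $g_2T_0\in\Gt g_1T_0$. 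Composing with the bijection of the previous paragraph gives the asserted one-to-one correspondence between $\Gt$-orbits on $\Ct_w$ and $\theta$-twisted $aTa^{-1}$-orbits on $\tau_\theta(G)\cap aTa^{-1}$; and if $T$ is finite, then $T_0$ is finite, so $\tau_\theta(G)\cap T_0$ and a fortiori its set of $\theta$-twisted $T_0$-orbits is finite, whence $\Ct_w$ consists of finitely many $\Gt$-orbits for every $w\in\Inv^\theta(\mcC)$. The step needing the most care is the cut-down in the second paragraph: one must see that $gc_0\in\Ct_w$ forces $n=g^{-1}\theta(g)$ to act trivially on all of $\Sigma$, not merely to fix the single chamber $c_0^*$---this is exactly where thinness of $\Sigma$ and freeness of the induced $W$-action are used; the remainder is bookkeeping with the twist map.
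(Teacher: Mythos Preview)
Your proof is correct and follows essentially the same strategy as the paper's: fix a base chamber $c_0\in\Ct_w$ in a $\theta$-stable apartment $\Sigma$, show that any $c'\in\Ct_w$ is of the form $gc_0$ with $\tau_\theta(g)\in T_0=aTa^{-1}$, and then match $\Gt$-orbits with $\theta$-twisted $T_0$-orbits. The organization differs slightly: you route the first part through Proposition~\ref{prop:DCD-Springer} and a codistance computation in $\Sigma$, whereas the paper verifies directly that $\tau_\theta(g)$ fixes $c$, $\theta(c)$, and $\Sigma$; and for the orbit matching you use the elementary injectivity of $\tau_\theta$ on $\Gt\backslash G$ together with $\tau_\theta(gt)=\tau_\theta(g)*_\theta t$, whereas the paper invokes Lemma~\ref{lem:intersecting-theta-stable-apts-Gt-conj} to pass from $\Gt$-orbits on chambers to $\Gt$-orbits on (chamber, apartment) pairs. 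Your packaging is arguably cleaner, since it makes the well-definedness and bijectivity of the correspondence explicit rather than leaving them implicit in the phrase ``in the same $\theta$-twisted orbit as $1$''.
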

\begin{proof}
Let $w\in \Inv^\theta(\mcC)$ and $c\in\Ct_w$. By assumption there exists a $\theta$-stable twin apartment $\Sigma$ containing $c$. The stabilizer $T' := \mathrm{Stab}_G(c,\Sigma)$ is conjugate to $T$, i.e., there exists $a \in G$ with $T'=aTa^{-1}$. 
Strong transitivity of $G$ implies that to any pair $(c',\Sigma')$, where $c'\in\Ct_w$ and $\Sigma' \ni c'$ is a $\theta$-stable twin apartment, there exists $g\in G$ such that $(g.c,g.\Sigma)=(c',\Sigma')$. 
One computes $\delta^*(c',\theta(c')) = w = \delta^*(c,\theta(c)) = \delta^*(g.c,g.\theta(c)) = \delta^*(c',g.\theta(c))$.
Since $\theta(\Sigma) = \Sigma$, one has $g.\theta(c)\in\Sigma'$.
As there is a unique chamber in $\Sigma'$ at any given codistance from $c'$, one concludes that $g.\theta(c)=\theta(c')$. Moreover,
$\tau_\theta(g).c = \theta(g^{-1})g.c = \theta(g^{-1}).c' = \theta(g^{-1}.\theta(c')) = \theta(\theta(c)) = c$
and, similarly, $\tau_\theta(g).\theta(c) = \theta(c)$ and $\tau_\theta(g).\Sigma=\Sigma$. Therefore, $\tau_\theta(g)\in T'$.

By Lemma \ref{lem:intersecting-theta-stable-apts-Gt-conj}, the chambers $c$ and $c'$ are in the same $\Gt$-orbit if and only if $(c,\Sigma)$ and $(c',\Sigma')=g.(c,\Sigma)$ are in the same $\Gt$-orbit. The latter is the case if and only if
there exists $h \in \Gt$ with $h.(c,\Sigma)=g.(c,\Sigma)$. 
As $T' = \mathrm{Stab}_G(c,\Sigma)$, this holds if and only if $h^{-1}g \in T'$ or, equivalently, if and only if there exists $x \in T'$ such that $h=gx$. Since $h \in \Gt$, by an application of $\tau_\theta$ this is equivalent to $1 = \tau_\theta(gx) = \tau_\theta(g) *_{\theta} x$. 

Hence $c$ and $c'$ are in the same $\Gt$-orbit of $\Ct_w$ if and only if $\tau_\theta(g)$ is in the same $\theta$-twisted $aTa^{-1}$-orbit of $\tau_\theta(G)\cap aTa^{-1}$ as $1$. 
\end{proof}

In the proof of \cite[Theorem 7.1]{Gramlich/Witzel}, which deals with the situation that $G$ is an $\F_{q^2}$-locally split Kac--Moody group and $\theta$ semi-linear, Lang's Theorem is applied to conclude that there exists a unique $\theta$-twisted $aTa^{-1}$-orbit in $\tau_\theta(G)\cap aTa^{-1}$, implying transitivity of $\Gt$ on each  $\Ct_w$.
Using this observation we can record once more (after \cite{Gramlich/Muehlherr:unpub-lattices} and \cite{Gramlich/Witzel}) the following special case of Theorem \ref{cor:DCD-A-twodivis}.

\begin{cor}\label{cor}
Let $G$ be a split Kac--Moody group defined over a finite field $\F_{q^2}$, let $\theta$ be an $\F_{q^2}$-semi-linar flip of $G$, and let $B$ be a Borel subgroup of $G$. Then 
\begin{eqnarray*}
\Gt \backslash G / B & \isomorph & \Inv(W), \\
\Gt gB & \leftrightarrow & \dt(gB).
\end{eqnarray*}
\end{cor}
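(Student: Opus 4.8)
The plan is to specialise Theorem~\ref{cor:DCD-A-twodivis} and then collapse its right-hand side. Since an $\F_{q^2}$-semi-linear flip is in particular a semi-linear flip of the split Kac--Moody group $G$, Theorem~\ref{cor:DCD-A-twodivis} applies and gives $\Gt \backslash G / B \isomorph \bigcup_{i \in I} W_{\Gt}(\Sigma_i) \backslash W_G(\Sigma_i)$. Being a flip, $\theta$ induces the trivial automorphism of $(W,S)$ by Proposition~\ref{prop:BN-quasi-flip-Weyl-auto}(\ref{prop:BN-quasi-flip-Weyl-auto2}), so $\Inv^\theta(W) = \Inv(W)$ and hence $\Inv^\theta(\mcC) \subseteq \Inv(W)$. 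Moreover $\dt$ is constant on $\Gt$-orbits of chambers: for $h \in \Gt$ one computes, using Proposition~\ref{prop:BN-quasi-flip-Weyl-auto}(\ref{prop:BN-quasi-flip-Weyl-auto3b}) and that $G$ preserves the codistance, $\dt(hc) = \delta^*(hc, \wtheta(hc)) = \delta^*(hc, h\wtheta(c)) = \delta^*(c, \wtheta(c)) = \dt(c)$. Identifying $\mcC_+$ with $G/B$ (all Borel subgroups are conjugate and $G$ acts strongly transitively), we obtain a well-defined surjection $\Gt \backslash G / B \to \Inv^\theta(\mcC)$, $\Gt g B \mapsto \dt(gB)$, whose fibre over $w$ is the set of $\Gt$-orbits contained in $\Ct_w$.

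The next step is injectivity of this surjection, i.e.\ that $\Gt$ is transitive on each $\Ct_w$, $w \in \Inv^\theta(\mcC)$. As $\theta$ locally fixes opposite chambers (Proposition~\ref{lem:semi-linear-strong}), Proposition~\ref{thm:theta-stable-apt} shows every chamber lies in a $\theta$-stable twin apartment, so Lemma~\ref{lem:finite-num-Gt-orbits-on-Ct} yields $a \in G$ and a bijection between the $\Gt$-orbits of $\Ct_w$ and the $\theta$-twisted $aTa^{-1}$-orbits of $\tau_\theta(G) \cap aTa^{-1}$. Invoking Lang's theorem exactly as in the proof of \cite[Theorem~7.1]{Gramlich/Witzel} (see also \cite{Gramlich/Muehlherr:unpub-lattices}) --- this is where finiteness of $\F_{q^2}$ and $\F_{q^2}$-semi-linearity of $\theta$ are used --- the set $\tau_\theta(G) \cap aTa^{-1}$ forms a single such orbit, so $\Ct_w$ is a single $\Gt$-orbit. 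Thus $\Gt \backslash G / B \isomorph \Inv^\theta(\mcC)$ via $\Gt gB \mapsto \dt(gB)$.

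It remains to show $\Inv^\theta(\mcC) = \Inv(W)$, which is the crux. The inclusion ``$\subseteq$'' was recorded above. For ``$\supseteq$'', let $w \in \Inv(W)$; by Proposition~\ref{prop:cox-twisted-inv} there are a spherical $J \subseteq S$ and $v \in W$ with $w = v w_J v^{-1}$ and $l(w) = 2l(v) + l(w_J)$, whence $v^{-1}wv = w_J$ and $l(v^{-1}wv) = l(w) - 2l(v)$, so Lemma~\ref{lem:pretty-cool-descent}(\ref{lem:pretty-cool-descent2}) reduces the claim to showing $w_J \in \Inv^\theta(\mcC)$. Since $\theta$ is strong (Proposition~\ref{lem:semi-linear-strong}) hence proper (Lemma~\ref{lem:strong-flip-descent}), there is a chamber $c_0$ with $\dt(c_0) = 1_W$; then $R := R_J(c_0)$ is a Phan residue of type $J$ parallel to $\theta(R)$ (Proposition~\ref{new}(\ref{new1})), and the involutive isometry $\phi := \mathrm{proj}_R \circ \theta_{|R}$ of the spherical building $R$ satisfies $\dt(c) = \delta_R(c, \phi(c))\, w_J$ for every $c \in R$, the value $w_J$ arising because $R$ is opposite $\theta(R)$. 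Hence a $\phi$-fixed chamber has $\theta$-codistance $w_J$, and it suffices to produce one. I would do this by induction on $|J|$: the rank-one case is the existence of an isotropic vector for a two-dimensional non-degenerate $\sigma$-hermitian form over $\F_{q^2}$ with $\sigma \neq \mathrm{id}$ (cf.\ the proof of Proposition~\ref{lem:semi-linear-strong}), and the inductive step can be carried out along the lines of the type-preserving case in the proof of Proposition~\ref{prop:theta-stable-apt-sph}. The main obstacle is precisely this last existence statement for fixed chambers of the local flips; alternatively, the equality $\Gt \backslash G/B \isomorph \Inv(W)$ for this class of groups may be quoted directly from \cite{Gramlich/Muehlherr:unpub-lattices}.
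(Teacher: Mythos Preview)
Your injectivity argument coincides with the paper's: both combine Propositions~\ref{thm:theta-stable-apt} and~\ref{lem:semi-linear-strong}, Lemma~\ref{lem:finite-num-Gt-orbits-on-Ct}, and Lang's theorem to conclude that $\Gt$ acts transitively on each $\Ct_w$, so that $\Gt gB \mapsto \dt(gB)$ injects $\Gt \backslash G/B$ into $\Inv^\theta(\mcC)\subseteq\Inv(W)$.

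For the surjectivity $\Inv^\theta(\mcC)=\Inv(W)$, your reduction via Proposition~\ref{prop:cox-twisted-inv} and Lemma~\ref{lem:pretty-cool-descent}(\ref{lem:pretty-cool-descent2}) to the values $w_J$ is valid, but the induction on $|J|$ that you sketch is the weak point. Proposition~\ref{prop:theta-stable-apt-sph} starts from a \emph{given} fixed chamber and produces an opposite one, whereas here you must produce a $\phi$-fixed chamber from nothing: your Phan chamber $c_0$ satisfies $\delta_R(c_0,\phi(c_0))=w_J$, not $\phi(c_0)=c_0$. Moreover, a chamber that is $(\proj_{P_s}\circ\theta)$-fixed in a single panel is generally not $(\proj_R\circ\theta)$-fixed in the ambient residue $R$, so your rank-one step does not feed into an inductive hypothesis of the shape you suggest.

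The paper sidesteps this entirely by climbing the \thcod one reflection at a time via Lemma~\ref{lem:flip-steep-descent}. Cases~(\ref{lem:flip-steep-descent1}) and~(\ref{lem:flip-steep-descent3}) handle the steps $w\to sws$ with $l(sws)=l(w)\pm2$ automatically, so only the parallel case $sws=w$ needs attention; there the task is to show that the panel realises \emph{both} possible codistances $w$ and $sw$. That reduces, exactly as in your rank-one analysis, to the fact that a two-dimensional non-degenerate $\sigma$-hermitian form over $\F_{q^2}$ with $\sigma\neq\mathrm{id}$ has both isotropic and anisotropic one-dimensional subspaces---no induction on $|J|$ is required. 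Since every involution in $W$ is reachable from $1_W$ by a chain of such $s$-steps (descend via Proposition~\ref{prop:cox-twisted-inv} and Lemma~\ref{lem:cox-inv3} and reverse), surjectivity follows.
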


\begin{proof}
As $\theta$ is a semi-linear flip, Propositions  \ref{thm:theta-stable-apt} and \ref{lem:semi-linear-strong}, Lemma \ref{lem:finite-num-Gt-orbits-on-Ct} and the subsequent remark yield an injection from $\Gt \backslash G / B$ into $\Inv(\mcC)$ via $\Gt gB  \mapsto  \dt(gB)$. According to Lemma \ref{lem:flip-steep-descent} it remains to prove that any $\theta$-parallel panel contains a pair of chambers with distinct codistances. The theory of flips of $\mathrm{(P)SL_2}(\mathbb{F}_{q^2})$ as
developed in \cite[Section 4]{Medts/Gramlich/Horn} implies the
existence of a two-dimensional non-degenerate $\sigma$-hermitian sesquilinear form (on the natural module of
$\mathrm{SL_2}(\mathbb{F}_{q^2})$), which necessarily contains both isotropic and anisotropic one-dimensional subspaces. The claim follows.
\end{proof}

\subsection{Flip-flop systems are geometric}

An analysis of involutions of Moufang polygons conducted by Hendrik Van Maldeghem and the second author (see Theorem \ref{hvm}) implies the following consequence of Proposition \ref{prop:hom-inhConn-imply-resConn}, which answers the question whether flip-flop systems of proper flips correspond to a natural diagram geometry.

\begin{theorem} \label{thm:quite-many-flips-are-geometric} \label{thm:geometric-2sph-simply-laced}
Let 
\begin{itemize}
\item $(G,\Uals,T)$ be an RGD system of $2$-spherical type with finite root groups $\Uals$ of odd order and of cardinality at least five, or 
\item let $\mathbb{F}$ be an infinite field of characteristic distinct from two and let $G$ be a connected $\F$-split reductive
$\F$-group or a split Kac--Moody
group over $\F$ of $2$-spherical type $(W,S)$.
\end{itemize}
 Moreover, let $\mcC$ be the associated twin building, and let $\theta$ be a quasi-flip of $G$.

 Then the flip-flop system $\Ct$ is connected and equals the union of the minimal Phan residues of $\mcC_\eps$. Moreover, there exists $K \subseteq S$, such that $(\mcC, \theta)$ is $K$-homogeneous. Furthermore, the $K$-residue chamber system $\Ct_K$ is connected and residually connected. In particular, if $\theta$ is proper, then $(\mcC, \theta)$ is $\emptyset$-homogeneous, and $\Ct$ is residually connected.
\end{theorem}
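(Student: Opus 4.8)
The plan is to reduce everything to the local rank-two analysis of Theorem \ref{hvm} via Proposition \ref{prop:good-rank2-implies-all-good} and Proposition \ref{prop:hom-inhConn-imply-resConn}. First I would check, in both of the two listed cases, that every rank two residue $R$ of $\mcC$ is a Moufang polygon whose root groups $\{U_\alpha\}$ are uniquely $2$-divisible: in the first case the root groups are finite of odd order, hence $2$-torsion free, hence uniquely $2$-divisible by Remark \ref{prop:mouf-2-divis}; in the second case $\charac\F\neq 2$ together with Remarks \ref{remalg}, \ref{remkm}, \ref{algchar2}, \ref{prop:mouf-2-divis} gives the same conclusion. Since $G$ is $2$-spherical, every rank two residue is spherical, so Theorem \ref{hvm} applies (using $|U_\alpha|\geq 5$ and, in the finite case, finiteness; in the infinite-field case "split" is what is needed) and yields that each rank two residue $R$ admits direct descent into a connected $\Rt$, where I restrict $\wtheta$ to $R\cup\theta(R)$ using Remark \ref{localisom} whenever $R$ is parallel to $\theta(R)$; when $R$ is not parallel to its image, $\lt$ strictly decreases off a single chamber by Lemma \ref{lem:flip-steep-descent}(\ref{lem:flip-steep-descent1})(\ref{lem:flip-steep-descent3}), so direct descent into a connected (in fact one-chamber, or at any rate nonempty connected) $\Rt$ is automatic.

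With the rank two hypothesis of Proposition \ref{prop:good-rank2-implies-hom-inhConn} verified, that proposition immediately gives: each residue $Q$ of $\mcC_\eps$ admits direct descent into $\Qt$ with $\Qt$ inheriting connectedness from $Q$; the pair $(\mcC,\theta)$ is $K$-homogeneous for some $K\subseteq S$; and $\Ct$ inherits connectedness from $\mcC_\eps$, hence is connected. By Remark \ref{lem:K-hom-then-Ct-union} together with $K$-homogeneity, $\Ct$ equals the union of all minimal Phan residues of $\mcC_\eps$; by Proposition \ref{lem:min-phanres-sph}(\ref{lem:min-phanres-sph3}), $K$ is spherical. Feeding "$(\mcC,\theta)$ is $K$-homogeneous and $\Ct$ inherits connectedness from $\mcC_\eps$" into Proposition \ref{prop:hom-inhConn-imply-resConn} yields that the $K$-residue chamber system $\Ct_K$ is residually connected; its connectedness follows from the connectedness of $\Ct$ since $\Ct_K$ is a quotient chamber system of the connected $\Ct$. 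Finally, if $\theta$ is proper, then by Definition \ref{theta-cod} some chamber has $\dt(c)=1_W$, so the flip-flop system contains a Phan chamber; since the $\theta$-codistance on $\Ct$ is constant equal to $w_K$ by Lemma \ref{lem:equal-thcod} and Proposition \ref{prop:cox-twisted-inv}, we get $w_K=1_W$, i.e.\ $K=\emptyset$, and then Proposition \ref{prop:hom-inhConn-imply-resConn} with $K=\emptyset$ says $\Ct=\Ct_\emptyset$ is itself residually connected.

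The only genuine obstacle is verifying the hypotheses of Theorem \ref{hvm} uniformly across the two cases — in particular making sure the "split or finite" dichotomy of that theorem is actually met: in the RGD case one must observe that a $2$-spherical RGD system with finite root groups has finite (hence Moufang and covered by the finite case of Theorem \ref{hvm}) rank two residues, and in the algebraic/Kac--Moody case that $\F$-split reductive groups and split Kac--Moody groups over an infinite $\F$ have split Moufang rank two residues, so the split case of Theorem \ref{hvm} applies with $|U_\alpha|=|\F|=\infty\geq 5$. Everything else is a bookkeeping assembly of the already-proven Propositions \ref{prop:good-rank2-implies-hom-inhConn}, \ref{prop:hom-inhConn-imply-resConn}, \ref{lem:min-phanres-sph} and Remark \ref{lem:K-hom-then-Ct-union}, with the properness clause handled by the constancy of $\dt$ on the connected $\Ct$.
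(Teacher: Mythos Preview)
Your proposal is correct and takes essentially the same approach as the paper, whose proof is the single line ``Combine Theorem~\ref{hvm} and Proposition~\ref{prop:hom-inhConn-imply-resConn} with Remarks~\ref{remalg}, \ref{remkm} and~\ref{prop:mouf-2-divis}''; you have correctly unpacked this chain, explicitly supplying the implicitly-used Proposition~\ref{prop:good-rank2-implies-hom-inhConn} and Remark~\ref{lem:K-hom-then-Ct-union}, and handling the proper case via constancy of $\dt$ on the connected $\Ct$. One minor caution: your treatment of the case where a rank two residue $R$ is not parallel to $\theta(R)$ invokes Lemma~\ref{lem:flip-steep-descent}(\ref{lem:flip-steep-descent1})(\ref{lem:flip-steep-descent3}), but that lemma concerns panels rather than rank two residues, so the claim that $\lt$ strictly decreases off a single chamber of $R$ does not follow directly; the paper itself does not spell out this reduction either, leaving the passage from Theorem~\ref{hvm} (stated for a single polygon with an involutive permutation) to the hypothesis of Proposition~\ref{prop:good-rank2-implies-hom-inhConn} (about rank two residues in the ambient twin building with the global $\dt$) implicit.
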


\begin{proof}
Combine Theorem \ref{hvm} and Proposition
\ref{prop:hom-inhConn-imply-resConn} with Remarks \ref{remalg}, \ref{remkm} and \ref{prop:mouf-2-divis}.
\end{proof}

\begin{remark}
This largely answers the question from
\cite{Bennett/Gramlich/Hoffman/Shpectorov:2003} whether a flip-flop system of a proper flip is geometric in general, as residual connectedness implies this by \cite[2.2--2.4]{Tits:1981}, \cite[Chapter 3]{Buekenhout/Cohen}.
\end{remark}

Let $G$ be a connected $\F$-split reductive $\F$-group and let $\theta$ be an involutive $\F$-automorphism of $G(\F)$.
By \cite[Propositions 4.8 and 4.11]{Helminck/Wang:1993} all minimal $\theta$-split parabolic $\F$-subgroups have equal type.
Theorem \ref{thm:quite-many-flips-are-geometric} implies the following generalization.

\begin{cor}
Let $G$ be a connected $\F$-split reductive $\F$-group defined over an infinite field $\F$ of characteristic distinct from two and let $\theta$ be an abstract involutive automorphism of $G(\F)$. Then the minimal $\theta$-split parabolic $\F$-subgroups of $G(\F)$ all have equal type.
\end{cor}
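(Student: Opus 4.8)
The plan is to translate the statement into building-theoretic language and then read it off Theorem~\ref{thm:quite-many-flips-are-geometric}. If $G$ admits no proper parabolic $\F$-subgroup (for instance if $G$ is a torus) the assertion is vacuous, so assume otherwise and let $\mcC$ denote the relative building of $G(\F)$, a thick spherical building. Since $\F$ is infinite, by \cite[Proposition~7.2]{Borel/Tits:1973} (cf.\ Remark~\ref{abstractpossible}(\ref{abstractpossible1})) the abstract automorphism $\theta$ carries parabolic $\F$-subgroups of $G(\F)$ to parabolic $\F$-subgroups; in particular it permutes the mutually $G(\F)$-conjugate Borel $\F$-subgroups, so $\theta$ is a quasi-flip of $G(\F)$ in the sense of Definition~\ref{def:BN-quasi-flip} and acts on $\mcC$ as an involutive almost isometry (Remark~\ref{rmk:spherical-flips}). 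A spherical Coxeter system is automatically $2$-spherical, since every rank-two parabolic subgroup of a finite Coxeter group is finite; hence $\mcC$ is of $2$-spherical type, and as $\charac\F\neq 2$ the second bullet of Theorem~\ref{thm:quite-many-flips-are-geometric} applies. It furnishes a subset $K\subseteq S$ such that $(\mcC,\theta)$ is $K$-homogeneous, i.e., every minimal Phan residue of $\mcC$ has type $K$.

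It remains to match minimal $\theta$-split parabolic $\F$-subgroups with minimal Phan residues. By Borel--Tits theory (Remark~\ref{remalg}) the parabolic $\F$-subgroups of $G(\F)$ correspond bijectively and inclusion-preservingly to the residues of $\mcC$, a parabolic of type $J$ going to a $J$-residue. Passing to the canonical spherical twin building with positive half $\mcC$ (Remarks~\ref{rmk:spherical-flips-1}, \ref{rmk:spherical-flips}), the spherical opposition in $\mcC$ corresponds to opposition in the twin building and $\theta$ corresponds to the associated twin-building quasi-flip. Therefore, for a parabolic $\F$-subgroup $P$ with associated residue $R$, the condition that $P$ be $\theta$-split --- that $P$ and $\theta(P)$ be opposite parabolic $\F$-subgroups --- is precisely the condition that $R$ be opposite $\theta(R)$, i.e., that $R$ be a Phan residue; and minimality among $\theta$-split parabolics translates, via inclusion-preservation of the dictionary, into minimality among Phan residues. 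Consequently the minimal $\theta$-split parabolic $\F$-subgroups of $G(\F)$ are exactly those attached to the minimal Phan residues of $\mcC$, which by $K$-homogeneity all have type $K$; in particular they all have equal type.

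The step I expect to require the most care is the dictionary in the second paragraph: verifying that the algebraic notion of two parabolic $\F$-subgroups being opposite coincides with building-theoretic opposition of the corresponding residues under the identification of $\mcC$ with the positive half of its spherical twin building, and that this identification carries the abstract involution $\theta$ to the twin-building quasi-flip of Definition~\ref{def:BN-quasi-flip}. Everything else is bookkeeping, and once this translation is in place the corollary is an immediate consequence of Theorem~\ref{thm:quite-many-flips-are-geometric}.
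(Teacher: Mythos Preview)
Your proposal is correct and follows essentially the same route as the paper: invoke Remark~\ref{abstractpossible}(\ref{abstractpossible1}) to see that $\theta$ is a quasi-flip, apply Theorem~\ref{thm:quite-many-flips-are-geometric} to obtain $K$-homogeneity, and translate between minimal $\theta$-split parabolics and minimal Phan residues. The paper's proof is a two-line sketch that asserts the parabolic--Phan-residue dictionary without elaboration, so your more careful spelling-out of that correspondence (and of why the spherical building satisfies the $2$-spherical hypothesis) is a welcome expansion rather than a departure.
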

\begin{proof}
Minimal $\theta$-split parabolic $\F$-subgroups correspond one-to-one to
minimal Phan residues of that same type. In view of Remark
\ref{abstractpossible}(\ref{abstractpossible1}) Theorem \ref{thm:quite-many-flips-are-geometric} applies.
\end{proof}

\subsection{$\Gt$ is finitely generated over finite fields}

Let $\mcC=((\mcC_+,\delta_+),(\mcC_-,\delta_-),\delta^*)$ be a locally finite Moufang twin building of rank $n$, i.e., each chamber of $\mcC$ is adjacent to finitely many chambers, and let $q \in \mathbb{N}$ be minimal with the property that $\mcC$ contains a panel of order $q+1$.
If $q > n$, then $G_\theta$ is a lattice in the full automorphism group $\mathrm{Isom}(\mcC_+)$ endowed with the topology of compact convergence, as announced by the first and the third author in 2007 and published for semi-linear flips $\theta$ in \cite{Gramlich/Muehlherr:unpub-lattices}. (Note that Lemma \ref{lem:pretty-cool-descent}, Proposition \ref{thm:theta-stable-apt} and Lemma \ref{lem:finite-num-Gt-orbits-on-Ct} of the present article combined with the strategy of \cite{Gramlich/Muehlherr:unpub-lattices} provide a proof for arbitrary quasi-flips $\theta$.)

If $q > \frac{1764^n}{25}$ and if $\mcC$ is $2$-spherical, then the locally compact group $\mathrm{Isom}(\mcC_+)$ satisfies Kazhdan's property (T) by \cite{Dymara/Januszkiewicz:2002}. Hence the lattice $\Gt$ also satisfies property (T) by \cite[Theorem~1.7.1]{Bekka/Harpe/Valette:2008} and, thus, is finitely generated by \cite[Theorem~1.3.1]{Bekka/Harpe/Valette:2008}.
The following result deals with the finite generation of $\Gt$ without using Kazhdan's property (T).

\begin{theorem} \label{thm:loc-fin-KM-is-fin-gen}
 Let 
$(G,\Uals,T)$ be an RGD system of $2$-spherical type with finite root groups $\Uals$ of odd order and of cardinality at least five, and let $\theta$ be a quasi-flip of $G$.
Then the group $\Gt$ is finitely generated.
\end{theorem}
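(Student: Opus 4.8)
The plan is to have $\Gt$ act on the chamber graph of the flip-flop system $\Ct$ --- vertices being the chambers of $\Ct$, edges joining adjacent chambers --- and to invoke the well-known principle that a group acting on a connected graph with finitely many orbits of vertices and of edges and with finitely generated vertex stabilizers is itself finitely generated (one assembles a finite generating set from generators of the vertex stabilizers over a spanning tree of the finite quotient graph, together with coset representatives along the remaining edges). The action is well defined: $\Gt$ preserves $\mcC_\eps$, and for $h\in\Gt$ one has $\theta(hc)=h\theta(c)$ by Proposition~\ref{prop:BN-quasi-flip-Weyl-auto}(\ref{prop:BN-quasi-flip-Weyl-auto3b}) and $\theta|_{\Gt}=\id$, so $\dt(hc)=\delta^*(hc,h\theta(c))=\dt(c)$; hence $\Gt$ preserves $\lt$ and thus the set $\Ct$ of chambers of minimal numerical \thcod. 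The present hypotheses are precisely those of the first bullet of Theorem~\ref{thm:quite-many-flips-are-geometric}, so that theorem applies: $\Ct$ is connected, and there is a spherical $K\subseteq S$ with $(\mcC,\theta)$ $K$-homogeneous, so by Proposition~\ref{lem:min-phanres-thcod-const}(\ref{lem:min-phanres-thcod-const3}) and Remark~\ref{lem:K-hom-then-Ct-union} every chamber of $\Ct$ has \thcod $w_K$; in particular $\Ct\subseteq\Ct_{w_K}$.

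First I would record the finiteness inputs. Since the root groups are finite, $\mcC$ is locally finite and $T$ is finite. Since finite root groups of odd order are uniquely $2$-divisible, $\theta$ locally fixes opposite chambers by Propositions~\ref{prop:moufang-twodivis-fixpts} and~\ref{prop:theta-stable-apt-sph-pol}, so Proposition~\ref{thm:theta-stable-apt} shows every chamber lies in a $\theta$-stable twin apartment. Lemma~\ref{lem:finite-num-Gt-orbits-on-Ct} then yields finitely many $\Gt$-orbits on $\Ct_{w_K}$ (this is where $T$ finite is used), hence finitely many $\Gt$-orbits on the $\Gt$-invariant subset $\Ct$. Finitely many orbits of edges follows immediately: an edge $\{c,c'\}$ of $\Ct$ is determined up to $\Gt$ by the orbit of $c$ together with the $\Stab_{\Gt}(c)$-orbit of $c'$ inside the finite (by local finiteness) set of chambers of $\Ct$ adjacent to $c$.

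It remains to bound the vertex stabilizers. For $c\in\Ct$ an element of $\Gt$ fixing $c$ also fixes $\theta(c)$ (again Proposition~\ref{prop:BN-quasi-flip-Weyl-auto}(\ref{prop:BN-quasi-flip-Weyl-auto3b})), so $\Stab_{\Gt}(c)\subseteq\Stab_G(c)\cap\Stab_G(\theta(c))$, the intersection of a Borel of $\mcC_+$ with a Borel of $\mcC_-$ at codistance $\dt(c)=w_K$. By the standard structure theory of RGD systems (the description of the intersection of a positive and a negative Borel subgroup at given codistance) this intersection is conjugate to $T\cdot\langle U_\beta\mid \beta\in\Phi_+\cap w_K\Phi_-\rangle$, i.e.\ the torus times a product of $l(w_K)$ root groups; as $T$ and all root groups are finite and $l(w_K)<\infty$, it is finite. (When $\theta$ is proper, $w_K=1_W$ and this intersection is simply a conjugate of $T$.) Hence $\Stab_{\Gt}(c)$ is finite, a fortiori finitely generated, and the four ingredients --- connectivity of $\Ct$, finitely many orbits of chambers, finitely many orbits of edges, finite chamber stabilizers --- produce a finite generating set for $\Gt$.

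The main obstacle I expect is the control of the chamber stabilizers in the last step: it requires combining the somewhat technical (though standard) description of the intersection of a positive and a negative Borel at finite codistance with the finiteness of $T$ over finite root groups. Everything else is a formal consequence of Theorem~\ref{thm:quite-many-flips-are-geometric} (connectivity and homogeneity of $\Ct$) and Lemma~\ref{lem:finite-num-Gt-orbits-on-Ct} (finiteness of the orbit set), plus bookkeeping with the locally finite structure of $\mcC$.
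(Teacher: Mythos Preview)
Your proposal is correct and follows essentially the same route as the paper's proof: both combine connectivity of $\Ct$ (Theorem~\ref{thm:quite-many-flips-are-geometric}), finiteness of the $\Gt$-orbit set on $\Ct$ (Proposition~\ref{thm:theta-stable-apt} plus Lemma~\ref{lem:finite-num-Gt-orbits-on-Ct}, using that $T$ is finite), local finiteness, and finiteness of chamber stabilizers in $\Gt$ to produce a finite generating set. The only differences are cosmetic---you invoke the standard graph-action lemma where the paper does an explicit pigeonhole covering $\Ct\subseteq\bigcup_{d\in\Gt.c}B_m(d)$, and for the finiteness of $\Stab_{\Gt}(c)$ you argue directly via the structure of $B_+\cap nB_-n^{-1}$ in an RGD system, whereas the paper simply cites \cite[Corollary~3.8]{Caprace/Muehlherr:2006}.
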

\begin{proof}
Since the torus $T$ of $G$ is a finite group and since the $U_\alpha$ are uniquely $2$-divisible, Proposition \ref{thm:theta-stable-apt} allows us to apply Lemma \ref{lem:finite-num-Gt-orbits-on-Ct} to obtain a finite set $X$ of representatives of $\Gt$-orbits on the flip-flop system $\Ct$ of the twin building $\mcC$ associated to $G$. Let $c \in \mcC$, $m := |X|$, and $C := B_{2m}(c) \cap \Gt.c$. For each $d \in C$, let $g_d \in \Gt$ such that $g_d(c) = d$. Note that $C$ is a finite set, as $\mcC$ is locally finite. Since $\Ct$ is connected by Theorem \ref{thm:quite-many-flips-are-geometric}, the pigeon hole principle implies $$\Ct \subseteq \bigcup_{d \in \Gt.c} B_{m}(d).$$ Therefore $$\Gt=\gen{\Stab_{\Gt}(c) \cup \{ g_d \mid d \in C \} }.$$
Since the stabilizer $\Stab_{\Gt}(c)$ is finite by \cite[Corollary 3.8]{Caprace/Muehlherr:2006} and since $C$ is finite, the group $\Gt$ is finitely generated.
\end{proof}

\begin{example}
Consider the quasi-flip $\theta$ of the group $\mathrm{GL}_n(\mathbb{F}_q[t,t^{-1}])$ described in Example \ref{Iwahori}, i.e., $\theta$ fixes $\mathrm{GL}_n(\mathbb{F}_q)$ elementwise and interchanges $t$ and $t^{-1}$. Then $G_\theta = \mathrm{GL}_n(\mathbb{F}_q([t+t^{-1}])) \cong \mathrm{GL}_n(\mathbb{F}_q([X]))$. If $n \geq 3$ (in which case the group $\mathrm{GL}_n(\mathbb{F}_q[t,t^{-1}])$ is $2$-spherical), then it has already been proved in \cite[Hauptsatz~C(iii)]{Behr:1969} that the group $G_\theta \cong \mathrm{GL}_n(\mathbb{F}_q([X]))$ is finitely generated. On the other hand, by \cite{Nagao:1959} the group $G_\theta \cong \mathrm{GL}_2(\mathbb{F}_q([X]))$ is not finitely generated; note that the group $\mathrm{GL}_2(\mathbb{F}_q[t,t^{-1}])$ is of type $\tilde A_1$, i.e., it is not $2$-spherical. 
\end{example}

\begin{remark}
More generally, if $G$ is a non-$2$-spherical Kac--Moody group over $\F_{q}$, then there exist quasi-flips $\theta$ such that $G_\theta$ is not finitely generated, as observed by Caprace, the first and the third author. Indeed, let $T$ be a tree residue of type $\{s, t \}$ of the building, so that $G.T$ is a simplicial tree by \cite[Proposition 2.1]{Dymara/Januszkiewicz:2002}. The action of the lattice $\Gt$ on the simplicial tree $G.T$ is minimal. If $\theta$ is such that the cardinality of $\{ \dt(c) \in W \mid c \in T \}$ is infinite (which by Corollary~\ref{cor} for instance is the case if $\theta$ is a semi-linear flip), then there are infinitely many $\Gt$-orbits on $G.T$. Hence from \cite[Proposition 7.9]{Bass:1993} (also \cite[Proposition 5.6]{Bass/Lubotzky:2001}) it follows that $\Gt$ cannot be finitely generated.
\end{remark}

{\footnotesize

\bibliography{inv}{}
\bibliographystyle{alpha}

}

\vspace{1cm}
\noindent
Authors' addresses:

\vspace{.8cm}

\noindent 
Ralf Gramlich \\
TU Darmstadt \\
Fachbereich Mathematik \\
Schlo\ss gartenstra\ss e 7 \\
64289 Darmstadt \\
Germany \\
e-mail: {\tt gramlich@mathematik.tu-darmstadt.de}

\vspace{.4cm}

\noindent Alternative address:

\noindent University of Birmingham \\
School of Mathematics \\
Edgbaston \\
Birmingham \\
B15 2TT \\
United Kingdom \\
e-mail: {\tt ralfg@maths.bham.ac.uk}

\vspace{.8cm}

\noindent Max Horn \\
TU Braunschweig \\
Fachbereich Mathematik und Informatik \\
Institut Computational Mathematics \\
Pockelstra\ss e 14 \\
38106 Braunschweig \\
Germany \\
email: {\tt max.horn@tu-bs.de}

\vspace{.8cm}

\noindent Bernhard M\"uhlherr \\
Universit\"at Gie\ss en \\
Mathematisches Institut \\
Arndtstra\ss e 2 \\
35392 Gie\ss en \\
Germany \\
email: {\tt Bernhard.M.Muehlherr@math.uni-giessen.de}

\end{document}